\tikzstyle{vertex} = [circle, draw, inner sep=0pt, minimum size=6pt]
\tikzstyle{edge} = [draw, -]
\let\origdoublepage\cleardoublepage
\newcommand{\clearemptydoublepage}{%
  \clearpage
  {\pagestyle{empty}\origdoublepage}%
}
\let\cleardoublepage\clearemptydoublepage
\newtheorem{lemma}{Lemma}
\newtheorem{theorem}[lemma]{Theorem}
\newtheorem{defn}[lemma]{Definition}
\newtheorem{claim}{Claim}
\newtheorem{obs}[lemma]{Observation}
\newtheorem{prob}{Problem}
\newtheorem{conjecture}[prob]{Conjecture}
\begin{document}


\frontmatter

    \title{On local search and LP and SDP relaxations for $k$-set packing}
    \author{Tim Oosterwijk \\ Eindhoven University of Technology}
    \date{October 8, 2013}


\begin{titlepage}
    \let\footnotesize\small
    \let\footnoterule\relax
    \let \footnote \thanks
    \setcounter{footnote}{0}
    \null\vfil
    \begin{center}
      \setlength{\parskip}{0pt}
      {\large\textbf{Eindhoven University of Technology}\par}
      \vfill
      {\huge \bf On local search and LP and SDP relaxations for $k$-set packing \par}
      \vfill
      {\LARGE Tim Oosterwijk \par}
      \vfill
      {\large A thesis submitted in partial fulfillment\par}
      {\large of the requirements of the degree of\par}
      \bigskip
      {\large \emph{\textbf{Master of Science}} \par}
      \bigskip
      \bigskip
      \bigskip
      {\Large October 8, 2013 \par}
      \bigskip
    \end{center}
    \par
    \vfil\null
  \end{titlepage} 
\cleardoublepage

\section*{Important note}

There is a mistake in the following line of Theorem 17: ``As an induced subgraph of $H$ with more edges than vertices constitutes an improving set''. Therefore, the proofs of Theorem 17, and hence Theorems 19, 23 and 24, are false. It is still open whether these theorems are true.

\cleardoublepage

\chapter*{\centering \begin{normalsize}Abstract\end{normalsize}}

\thispagestyle{plain}
\addcontentsline{toc}{chapter}{\numberline{}Abstract}

\begin{quotation}
\noindent Set packing is a fundamental problem that generalises some well-known combinatorial optimization problems and knows a lot of applications. It is equivalent to hypergraph matching and it is strongly related to the maximum independent set problem.

In this thesis we study the $k$-set packing problem where given a universe $\mathcal{U}$ and a collection $\mathcal{C}$ of subsets over $\mathcal{U}$, each of cardinality $k$, one needs to find the maximum collection of mutually disjoint subsets. Local search techniques have proved to be successful in the search for approximation algorithms, both for the unweighted and the weighted version of the problem where every subset in $\mathcal{C}$ is associated with a weight and the objective is to maximise the sum of the weights. We make a survey of these approaches and give some background and intuition behind them. In particular, we simplify the algebraic proof of the main lemma for the currently best weighted approximation algorithm of Berman (\cite{Berman}) into a proof that reveals more intuition on what is really happening behind the math.

The main result is a new bound of $\frac{k}{3} + 1 + \varepsilon$ on the integrality gap for a polynomially sized LP relaxation for $k$-set packing by Chan and Lau (\cite{LapChiLau}) and the natural SDP relaxation \textbf{[NOTE: see page iii]}. We provide detailed proofs of lemmas needed to prove this new bound and treat some background on related topics like semidefinite programming and the Lov\'{a}sz Theta function.

Finally we have an extended discussion in which we suggest some possibilities for future research. We discuss how the current results from the weighted approximation algorithms and the LP and SDP relaxations might be improved, the strong relation between set packing and the independent set problem and the difference between the weighted and the unweighted version of the problem.


\end{quotation} 
\cleardoublepage
\chapter*{\centering \begin{normalsize}Acknowledgements\end{normalsize}}

\thispagestyle{plain}
\addcontentsline{toc}{chapter}{\numberline{}Acknowledgements}

\begin{quotation}
\noindent First of all, I would like to give a huge thanks to my supervisor Nikhil Bansal. I learned so much during (the research for) this thesis about subjects that interest me, more than this thesis could ever contain. You have only encouraged me to absorb all that knowledge and broaden my horizons. You showed me fascinating results on all related topics and problems and inspired me by researching all that might be interesting for my academic future. Thank you for everything you have done to make me a better researcher.

I would like to thank my future (co-)promotor Tjark Vredeveld; your interest in the thesis and me has contributed to my confidence as a researcher and my general feeling of well-being. Thank you, I'm looking forward to our cooperation in the years to come.

I also owe a thank you to my fellow student Annette Ficker, with whom I initially researched various interesting problems until we parted ways when I chose $k$-set packing as my subject and you chose 2-dimensional bin packing for your thesis. Thank you for the fruitful discussions about everything we encountered and of course for the very nice time we had together during university.

I also would like to thank some other people who discussed the problem with me, among which are authors of some papers I read. In particular I would like to thank Per Austrin, Fabrizio Grandoni, Konstantin Makarychev, Monaldo Mastrolilli, Viswanath Nagarajan and Ruben van der Zwaan for their time.

A big thank you to the teachers in the combinatorial optimization group for their interesting and inspiring classes: Nikhil Bansal, Cor Hurkens, Judith Keijsper, Rudi Pendavingh and Gerhard Woeginger. A special thanks to Jan Draisma and Gerhard Woeginger for forming the assessment committee for my thesis.

Also I want to thank some other fellow students, in particular Jorn van der Pol and Reint den Toonder, for their interest in the thesis, their general support and of course their friendly company for the past years.

Finally a thanks to my family and boyfriend for their support and faith. I would not be where I am today if it weren't for you. A huge thanks to all of you.
\end{quotation} 
\cleardoublepage


\newpage
\phantomsection
\addcontentsline{toc}{chapter}{\numberline{}Table of contents}
\setcounter{tocdepth}{1}
\tableofcontents

\mainmatter


\chapter{Introduction}\label{chap:Introduction}


\section{Definition of the problem}\label{sec:Problem}

\paragraph{Set packing and $k$-set packing} Set packing is one of Karp's 21 NP-complete problems \cite{Karp} and it has received a lot of attention during the past years. A lot of progress has been made on the complexity of this problem, even though under standard complexity assumptions algorithms for this problem require at least superpolynomial or perhaps even exponential running time. Set packing is a fundamental problem that generalises some well-known problems and thus knows a lot of applications. There has been a long line of research on this problem. In this thesis we will consider $k$-set packing, which is the following problem.
\begin{quote}
\begin{bf}$k$-Set Packing ($k$-SP)\end{bf}\\
Given: a universe $\mathcal{U}$ of $N$ elements and a collection $\mathcal{C} \subseteq 2^\mathcal{U}$ of $n$ subsets over $\mathcal{U}$, each of cardinality $k$ \\
Find: a maximum collection of mutually disjoint subsets in $\mathcal{C}$.
\end{quote}
Any collection of mutually disjoint subsets in $\mathcal{C}$ is called a packing and the goal is to find the largest packing. In $k$-set packing every subset in $\mathcal{C}$ contains at most $k$ elements. Without loss of generality assume every subset contains exactly $k$ elements: add some unique dummy elements to the subsets of less than $k$ elements.

$k$-set packing is a special case of the optimization version of the set packing problem, where there is no restriction on the cardinality of every subset in $\mathcal{C}$.
\begin{quote}
\begin{bf}(Maximum) Set Packing (SP)\end{bf}\\
Given: a universe $\mathcal{U}$ of $N$ elements and a collection $\mathcal{C} \subseteq 2^\mathcal{U}$ of $n$ subsets over $\mathcal{U}$ \\
Find: a maximum collection of mutually disjoint subsets in $\mathcal{C}$.
\end{quote}

\section{Terminology}\label{sec:Terminology}


We proceed with some notational conventions and definitions to make sure no confusion may arise.
By $\mathcal{U}$ we mean the universe of elements over which the $k$-set system $\mathcal{C} \subseteq 2^{\mathcal{U}}$ has been defined. We write $|\mathcal{U}| = N$ and $|\mathcal{C}| = n$. By $I = (\mathcal{U},\mathcal{C})$ a given instance for $k$-set packing is denoted in which the objective is to find the largest packing. 
\begin{defn}
\textbf{(Conflict graph)} Let an instance $I = (\mathcal{U},\mathcal{C})$ for the $k$-set packing problem be given. The conflict graph of $I$ is the graph $G$ where every subset in $\mathcal{C}$ is represented by a vertex. Two vertices 
are adjacent if and only if 
the subsets in $\mathcal{C}$ these vertices correspond to intersect each other.
\end{defn}
Now let $\mathcal{A}$ be some solution to this instance, i.e. a collection of subsets in $\mathcal{C}$ that are mutually disjoint.
\begin{defn}
\textbf{(Intersection graph)} Let an instance $I$ for the $k$-set packing problem be given and let $\mathcal{A}$ and $\mathcal{B}$ be two packings. The intersection graph of $\mathcal{A}$ and $\mathcal{B}$ is the induced subgraph of their vertices in the conflict graph of $I$.
\end{defn}
The bipartite intersection graph of $\mathcal{A}$ and $\mathcal{B}$ thus contains a vertex for every set in $\mathcal{A}$ and $\mathcal{B}$ and two vertices 
are adjacent 
if and only if 
the subsets in the set packing instance are in conflict. Throughout this thesis calligraphic letters $\mathcal{A}$ will be used to denote collections of subsets and normal letters $A$ to denote the set of vertices corresponding to $\mathcal{A}$ in the conflict or intersection graph. Denote $N_G(B,A) = N(B) \cap A$, the neighbours of $B$ in $A$ in their intersection graph $G$. Sometimes for brevity we just write $N(G,A)$ if the graph $G$ is clear from the context. We can now succinctly define an improving set.
\begin{defn}
\textbf{(Improving set)} Let $\mathcal{A}$ and $\mathcal{B}$ be two packings. $\mathcal{B}$ is called an improving set for $\mathcal{A}$ when $\left| A \cup B \setminus N_G(B,A) \right| > |A|$, i.e. when adding the sets of $\mathcal{B}$ to the current solution $\mathcal{A}$ and removing the sets of $\mathcal{A}$ that they intersect, leads to a solution of larger cardinality.
\end{defn}
%
The removal of the neighbourhood of $B$ in $A$ ensures that the new solution is also mutually disjoint. 
Now define the following.

\begin{defn}
\textbf{($t$-locally optimal solution)} Let $\mathcal{A}$ be a packing. $\mathcal{A}$ is said to be $t$-locally optimal when for every collection of mutually disjoint subsets $\mathcal{B}$ with $|\mathcal{B}| \leq t$ we have $\left| A \cup B \setminus N_G(B,A) \right| \leq |A|$. In other words, a solution is a $t$-locally optimal solution when there does not exist an improving set of size at most $t$.
\end{defn}

Finally, an algorithm is said to approximate a maximization problem within a factor $\rho>1$ if for every instance of the problem $\frac{v(OPT)}{v(A)} \leq \rho$, where $v(A)$ is the objective value of the output of the algorithm and $v(OPT)$ is the best achievable objective value for that instance. We call $\rho$ its approximation guarantee.


\section{Contribution}\label{sec:Contribution}




\subsection{Improved integrality gap}\label{subsec:IntroGap}

In this thesis we improve the integrality gap of a linear program of $k$-set packing called the intersecting family LP (see Section \ref{sec:IntersectingLP}).
\begin{theorem}\label{thm:IntegralityGap1}
Let $\varepsilon > 0$. The integrality gap of the intersecting family LP is at most $\frac{k}{3} + 1 + \varepsilon$.
\end{theorem}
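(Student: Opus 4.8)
The plan is to bound the optimum of the intersecting family LP by $\bigl(\tfrac{k}{3}+1+\varepsilon\bigr)$ times the size of a suitable integral packing and then invoke feasibility. Concretely, fix an optimal fractional solution $x^*$ and a $t$-locally optimal packing $\mathcal{A}$, where $t=t(\varepsilon)$ is a large enough constant. Since $\mathcal{A}$ is a feasible packing, $|A|\le v(OPT)$, so it suffices to prove that the LP value $\sum_S x^*_S$ is at most $\bigl(\tfrac{k}{3}+1+\varepsilon\bigr)|A|$; dividing by $v(OPT)\ge|A|$ then yields the claimed gap.

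First I would set up the basic budget inequality. Every set $S$ in the support of $x^*$ meets at least one member of $A$ (otherwise $\mathcal{A}$ would not be maximal) and, since the members of $A$ are pairwise disjoint with $k$ elements each, $S$ meets at most $k$ of them; write $N_A(S)$ for the members of $A$ that $S$ intersects. Summing the star constraints $\sum_{S\ni e}x^*_S\le 1$ over the $k$ elements of each $a\in A$ gives $\sum_S|N_A(S)|\,x^*_S\le k|A|$. Splitting the support by the type $|N_A(S)|\in\{1,2,3,\dots\}$ and writing $W_j=\sum_{|N_A(S)|=j}x^*_S$, this reads $\sum_j j\,W_j\le k|A|$, whereas the LP value is $\sum_j W_j$. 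Combining the two gives $3\sum_S x^*_S\le k|A|+2W_1+W_2$, since the terms $W_3,W_4,\dots$ contribute nonpositively. Thus the whole problem reduces to showing $2W_1+W_2\le(3+\varepsilon)|A|$: sets meeting three or more members of $A$ are already efficient, and only the type-$1$ and type-$2$ weight must be controlled.

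The type-$1$ bound comes from a two-for-one swap. For a fixed $a\in A$, if two sets meeting only $a$ among the members of $A$ were mutually disjoint, then removing $a$ and adding both would be a $2$-improving set, contradicting local optimality; hence the type-$1$ sets at $a$ (together with $a$) form an intersecting family, and the corresponding intersecting-family constraint caps their total weight by $1$, giving $W_1\le|A|$. The heart of the argument is the type-$2$ bound, and here I would build an auxiliary graph $H$ on vertex set $A$ whose weighted edges are the type-$2$ sets, an edge of weight $x^*_S$ joining the two members of $A$ that $S$ meets, so that $W_2$ equals the total edge weight of $H$. The aim is to show, via local optimality, that $H$ is essentially a fractional pseudoforest: any induced subgraph on $p$ vertices carrying edge weight appreciably above $p$ should let us delete the $p$ corresponding members of $A$ and insert the edge-sets, netting a gain and contradicting $t$-local optimality. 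This would give $W_2\le(1+\varepsilon)|A|$ and hence $2W_1+W_2\le(3+\varepsilon)|A|$, from which the theorem follows after absorbing the slack into $\varepsilon$.

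The main obstacle is precisely this last translation, namely that a subgraph with more edges than vertices yields an improving set. For the inserted edge-sets to form a valid packing they must be pairwise disjoint, but two type-$2$ sets sharing an endpoint $a$ may well intersect one another, inside $a$ or elsewhere, so the naive swap need not produce a packing at all; this is exactly the gap flagged in the erratum. Repairing it seems to require either first extracting a large mutually disjoint subcollection of edge-sets before swapping, or strengthening the density argument with further intersecting-family constraints so that overlaps among edge-sets are themselves charged. The fractional and weighted bookkeeping, together with the dependence of the slack $\varepsilon$ on the permitted swap size $t$, are additional technical hurdles layered on top of this central difficulty.
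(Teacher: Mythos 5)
Your proposal reconstructs the paper's own argument for this theorem (given as Theorem~\ref{thm:IntegralityGap2}) almost step for step: the same decomposition of the fractional support into sets meeting exactly one, exactly two, or at least three members of a $t$-locally optimal packing; the same use of $2$-local optimality to show that the type-$1$ sets at a fixed $a \in A$ form an intersecting family, whence $W_1 \le |A|$ by the intersecting-family constraint; the same auxiliary multigraph $H$ on vertex set $A$ with one edge per type-$2$ set, analysed via the sparsity lemma for multigraphs (Lemma~\ref{lem:BermanMIS3}) to get $W_2 \le (1+\varepsilon)|A|$; and the same final accounting $3\,x(\mathcal{F}) \le k|A| + 2\,x(\mathcal{F}_1) + x(\mathcal{F}_2)$.

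You have also put your finger on exactly the step where this argument fails: the claim that an induced subgraph of $H$ with more edges than vertices constitutes an improving set. Two type-$2$ sets corresponding to distinct edges of $H$ may intersect each other --- inside a shared endpoint of $A$ or in elements not covered by $A$ at all --- so swapping out the $p$ vertices and swapping in the corresponding edge-sets need not yield a packing, and $t$-local optimality of $A$ therefore does not exclude dense induced subgraphs of $H$. This is precisely the error acknowledged in the note on page iii: the paper's proofs of this theorem and its corollaries are invalid for this reason, and whether the bound $\frac{k}{3}+1+\varepsilon$ actually holds is stated there to be open. Your proposal thus contains the same gap as the paper, with the difference that you flag it honestly rather than asserting the step; the repairs you suggest (extracting a large pairwise-disjoint subfamily of edge-sets before swapping, or charging the overlaps through further intersecting-family constraints) are the natural directions, but neither is carried out, so the theorem is not established by your sketch any more than by the published argument.
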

Following the results from \cite{LapChiLau}, this immediately implies the following two theorems.
\begin{theorem}\label{thm:LP1}
Let $\varepsilon > 0$. There is a polynomially sized LP for $k$-set packing with integrality gap at most $\frac{k}{3} + 1 + \varepsilon$.
\end{theorem}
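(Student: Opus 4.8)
The plan is to derive this statement directly from Theorem~\ref{thm:IntegralityGap1} together with the structural results of Chan and Lau~\cite{LapChiLau}. The one thing that stops Theorem~\ref{thm:IntegralityGap1} from giving the conclusion outright is that the intersecting family LP is \emph{not} polynomially sized: it carries one constraint $\sum_{S \in \mathcal{F}} x_S \leq 1$ for every intersecting family $\mathcal{F}$, and there are in general exponentially many such families (indeed, separating over them is as hard as separating the clique constraints for independent set). Hence the entire content of this theorem is the transfer of the integrality-gap bound from the exponential relaxation to a compact one, and for this I would lean on the polynomial formulation that Chan and Lau construct (a constant number of rounds of a lift-and-project hierarchy applied to the basic assignment LP, or equivalently a polynomially sized family of derived constraints).

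First I would recall the precise sandwich statement implicit in \cite{LapChiLau}: for every fixed $\delta > 0$ there is an LP, call it $\mathrm{LP}_{\mathrm{poly}}$, of size polynomial in $n$ and $N$ (with the degree of the polynomial depending on $\delta$ and $k$), which is a valid relaxation of $k$-set packing and is at least as strong as the intersecting family LP up to a $(1+\delta)$ factor. Writing $v(\mathrm{OPT})$ for the value of the best packing and $\mathrm{val}(\cdot)$ for an LP optimum, on every instance this gives the chain
\begin{equation*}
v(\mathrm{OPT}) \;\leq\; \mathrm{val}(\mathrm{LP}_{\mathrm{poly}}) \;\leq\; (1+\delta)\,\mathrm{val}(\text{intersecting family LP}).
\end{equation*}
The left inequality records that $\mathrm{LP}_{\mathrm{poly}}$ is a relaxation, so $v(\mathrm{OPT})$ remains a lower bound; the right inequality is exactly the point where the compactification costs at most a $(1+\delta)$ factor against the intersecting family LP.

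Next I would invoke Theorem~\ref{thm:IntegralityGap1}, which bounds the last quantity by $(\tfrac{k}{3}+1+\delta')\,v(\mathrm{OPT})$ for any $\delta'>0$. Combining the two bounds yields
\begin{equation*}
\frac{\mathrm{val}(\mathrm{LP}_{\mathrm{poly}})}{v(\mathrm{OPT})} \;\leq\; (1+\delta)\left(\frac{k}{3}+1+\delta'\right) \;=\; \frac{k}{3}+1 + \Big(\delta' + \delta\big(\tfrac{k}{3}+1+\delta'\big)\Big),
\end{equation*}
so it remains to choose $\delta$ and $\delta'$ small enough in terms of $\varepsilon$ and $k$ that the parenthesised excess is at most $\varepsilon$; taking $\delta,\delta' = \Theta(\varepsilon/k)$ suffices. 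Since $\mathrm{LP}_{\mathrm{poly}}$ is a valid relaxation and polynomially sized, this establishes the claim.

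I expect the only genuine obstacle to be bookkeeping rather than a new idea: everything substantive about the compact relaxation---in particular the guarantee that restricting to a polynomial number of constraints loses at most a $(1+\delta)$ factor against the intersecting family LP---is already carried out in \cite{LapChiLau}, which is why the implication is ``immediate.'' What must be checked carefully here is only that their relaxation is no weaker than the intersecting family LP (so the gap cannot degrade) while remaining a valid relaxation (so $v(\mathrm{OPT})$ stays a lower bound), and that the two approximation parameters $\delta,\delta'$ can be absorbed into a single additive $\varepsilon$.
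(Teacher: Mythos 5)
Your proposal is correct and follows essentially the same route as the paper: combine Theorem~\ref{thm:IntegralityGap1} with Chan and Lau's polynomially sized reformulation of the intersecting family LP. The only substantive difference is that the actual compactification is not a lift-and-project hierarchy with a $(1+\delta)$ loss but a lossless construction via the kernel lemma for intersecting families (Lemma~\ref{lem:OriginalKernel}), which introduces variables $x_U$ for subsets of hyperedges and at most $n^{f(k)+1}2^{2^{f(k)}}$ kernel constraints that imply \emph{every} intersecting family constraint exactly, so your $\delta$-bookkeeping is unnecessary and the gap bound carries over verbatim.
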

\begin{theorem}\label{thm:SDP1}
Let $\varepsilon > 0$. There is a polynomially sized SDP for $k$-set packing with integrality gap at most $\frac{k}{3} + 1 + \varepsilon$.
\end{theorem}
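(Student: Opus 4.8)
The plan is to obtain Theorem \ref{thm:SDP1} not by analysing a semidefinite program from scratch, but as a direct consequence of Theorem \ref{thm:IntegralityGap1} combined with the relaxation machinery of Chan and Lau \cite{LapChiLau}. The entire difficulty of bounding the integrality gap has already been absorbed into Theorem \ref{thm:IntegralityGap1}; what remains is to transport that bound from the (exponentially large) intersecting family LP to the natural SDP, which is polynomially sized. The guiding principle is that the natural SDP is a relaxation that is at least as tight as the intersecting family LP, so its optimum is squeezed between the integer optimum and the LP optimum, and the gap bound carries over verbatim.

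I would carry this out in four short steps. First, I would recall from \cite{LapChiLau} that the natural SDP relaxation for $k$-set packing has polynomial size and can be solved to arbitrary accuracy in polynomial time (e.g.\ via the ellipsoid method), which settles the ``polynomially sized'' half of the claim. Second, and this is the essential ingredient, I would invoke the fact established by Chan and Lau that for every instance the SDP optimum $\mathrm{val}_{\mathrm{SDP}}$ is at most the intersecting family LP optimum $\mathrm{val}_{\mathrm{LP}}$; intuitively, every feasible SDP solution induces a feasible point of the intersecting family LP of no larger value. Third, I would apply Theorem \ref{thm:IntegralityGap1} in the form $\mathrm{val}_{\mathrm{LP}} \le \left(\tfrac{k}{3}+1+\varepsilon\right)\mathrm{OPT}$, where $\mathrm{OPT}$ denotes the size of a maximum packing. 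Fourth, since the SDP is a relaxation we have $\mathrm{OPT}\le\mathrm{val}_{\mathrm{SDP}}$, and chaining the inequalities gives $\mathrm{OPT}\le\mathrm{val}_{\mathrm{SDP}}\le\mathrm{val}_{\mathrm{LP}}\le\left(\tfrac{k}{3}+1+\varepsilon\right)\mathrm{OPT}$, so the integrality gap of the SDP is at most $\tfrac{k}{3}+1+\varepsilon$, as required.

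The main obstacle is concentrated entirely in the second step. The intersecting family LP has one constraint per intersecting family, of which there are exponentially many, whereas the SDP must somehow enforce all of them using only polynomially many variables and constraints; it is not a priori obvious that a polynomial-sized program can dominate an exponential one, and indeed this is possible only because the SDP can capture correlations that no linear program of comparable size can. Making this precise means exhibiting, from any feasible SDP solution, a projection onto an intersecting family LP solution that respects every intersecting family inequality, which is the technical heart of \cite{LapChiLau} and which I would cite rather than reprove. A related point worth checking is that this projection is loss-free enough that the exact same error term $\varepsilon$ survives the transfer: if the polynomial-sized SDP only captures intersecting families up to a bounded size (equivalently, a bounded level of a relaxation hierarchy), one must verify that for fixed $\varepsilon$ and $k$ this size remains polynomially bounded, so that the size claim and the gap claim hold simultaneously.
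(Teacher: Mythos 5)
Your proposal follows essentially the same route as the paper: it establishes that the natural SDP (the Lov\'{a}sz theta relaxation $\vartheta_3$) is sandwiched between the integer optimum and the intersecting family LP optimum, and then transfers the gap bound of Theorem \ref{thm:IntegralityGap1} by chaining inequalities, citing \cite{LapChiLau} for the domination step just as the paper does via Lemmas \ref{lem:SDPnew1}--\ref{lem:SDPnew3}. Your closing worry about the SDP only capturing bounded-size intersecting families is unnecessary here, since the orthogonality constraints of the theta function imply \emph{all} clique constraints at once, but this does not affect the correctness of the argument (which, like the paper's, inherits the flaw in Theorem \ref{thm:IntegralityGap1} noted on page iii).
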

The previous bound on this integrality gap was $\frac{k+1}{2}$ \cite{LapChiLau}. This was a continuation of the work on the standard linear programming relaxation for $k$-set packing. We treat these results on the linear programs in Chapter \ref{chap:LP}. The result on the semidefinite programming relaxation is treated in Chapter \ref{chap:SDP}, along with some background.

\subsection{Simplified proof}\label{subsec:IntroBerman}

We simplify the proof of the main lemma of the currently best weighted approximation algorithm from Berman \cite[Lemma 2]{Berman}. The current proof is very clever but also very algebraic. We make the observation that the squared weight function somehow captures both the maximum weight and the sum of the weights of the neighbourhood of a vertex in the conflict graph of the instance. This allows us to avoid the algebraic proof and simplify it.

We treat this result in Chapter \ref{chap:Weighted}.

\section{Outline of the thesis}\label{sec:Outline}

\paragraph{Chapter 2} Set packing is one of the fundamental optimization problems and therefore there are numerous applications within mathematics and real life. It is highly related to some other well-known problems such as the hypergraph matching problem and the maximum independent set problem. In Chapter \ref{chap:Applications} these applications and related problems are considered.

\paragraph{Chapter 3} In Chapter \ref{chap:Results} the current results on the $k$-set packing problem are discussed. First the unweighted approximation algorithms and the weighted approximation algorithms are considered in Sections \ref{sec:Unweighted} and \ref{sec:Weighted}. For the unweighted problem the best approximation algorithm currently achieves an approximation guarantee of $\frac{k+1}{3} + \varepsilon$ \cite{Cygan,FurerYu} and for the weighted problem the best result is a $\frac{k+1}{2}$-approximation \cite{Berman}.

Then Section \ref{sec:Parameterized} continues with the parameterized algorithms for $k$-set packing, as this problem is fixed parameter tractable. There has been a long line of research in this area and Appendix \ref{app:Parameterized} contains an overview of these algorithms.

Chapter \ref{chap:Results} ends with some results on the inherent hardness of the problem in Section \ref{sec:Hardness}. Subsection \ref{subsec:Hardness0} starts with some hardness results that apply to the general set packing problem. Subsections \ref{subsec:Hardness1}, \ref{subsec:Hardness2} and \ref{subsec:Hardness3} contain theorems on $k$-set packing specifically. It is NP-hard to approximate within a factor of $\Omega\left(\frac{k}{\log k}\right)$ \cite{Hazan} and three other results on the limits of local search techniques for the problem are mentioned \cite{Cygan,FurerYu,Sviridenko}.

\paragraph{Chapter 4} Chapter 4 treats the results on the standard linear programming relaxation and the intersecting family LP and contains the proofs of Theorems \ref{thm:IntegralityGap1} and \ref{thm:LP1}.

\paragraph{Chapter 5} The proof of Theorem \ref{thm:SDP1} is given in Chapter \ref{chap:SDP} along with some background about semidefinite programming and the Lov\'{a}sz Theta function.

\paragraph{Chapter 6} The topic of Chapter \ref{chap:Weighted} is weighted $k$-set packing. Currently the best result is a $\frac{k+1}{2}$-approximation from Berman \cite{Berman}. He provides two algorithms. One is called \textsc{SquareImp} which uses a local search technique using the squared weight function. The second is called \textsc{WishfulThinking} which searches locally for structures which he calls nice claws (definitions are in Chapter \ref{chap:Weighted}). He links these two algorithms in a shrewd way, allowing him to proof both the approximation guarantee and the polynomial running time. At the end we give a simplified proof of the main lemma.

\paragraph{Chapter 7} Finally there is an extended discussion in Chapter \ref{chap:Discussion} about possible improvements. The results on the LP and the SDP relaxation are discussed in Section \ref{sec:DiscLP} and whether these can be extended to the weighted case. We see why changing Berman's \cite{Berman} weighted $\frac{k+1}{2}$-approximation algorithm a bit does not yield an improvement in Section \ref{sec:DiscBerman}. In Section \ref{sec:IS} the strong relation between set packing and the independent set problem (on bounded degree graphs) is considered and it is argued why the results for independent set are better. We discuss why the weighted problem asks for such different algorithms compared to the unweighted version of the problem in Section \ref{sec:WeightedVSUnweighted}. Several suggestions for future research are given. 

\chapter{Applications and related problems}\label{chap:Applications}

\section{Applications}\label{sec:Applications}

Set packing has a lot of applications in capital budgeting, crew scheduling, cutting stock, facilities location, graphs and networks, manufacturing, personnel scheduling, districting, information systems, vehicle routing and timetable scheduling, see \cite{Applications} for a survey. In this section some real life applications are mentioned and in Section \ref{sec:RelatedProblems} the relation of set packing to other combinatorial problems is discussed. Section \ref{sec:LocalSearch} then gives some background on local search as a background for Chapter \ref{chap:Results}.


\paragraph{Latin squares} A nice application of set packing is the extension of partial Latin squares \cite{LatinSquare1,LatinSquare2}. A partial Latin square is an $n \times n$ array in which each cell is either empty or coloured with exactly one of the colours $\{1,\ldots,n\}$. A Latin square is a partial Latin square without empty cells where every colour occurs exactly once in every row and every column. The problem is given a partial Latin square to find a completion that colours as many empty cells as possible such that no rows or columns contain any colour more than once.


This can be modeled as a set packing problem as follows. Let the universe $\mathcal{U}$ consist of $3n^2$ elements of the form $\{e_i, r_j\}$, $\{e_i, c_k\}$ and $\{r_j, c_k\}$ for every combination of some colour $i$, some row $j$ and/or some column $k$. Call an element $\{e_i, r_j\}$ or $\{e_i, c_k\}$ empty if respectively row $j$ or column $k$ does not contain colour $i$, and call $\{r_j, c_k\}$ empty if the cell in row $j$ column $k$ is empty. Now let the collection of subsets $\mathcal{C}$ consist of all triplets $\{\{e_i, r_j\}, \{e_i, c_k\}, \{r_j, c_k\}\}$ where all three elements are empty. This creates a set packing instance where every triplet contained in the solution indicates to colour the cell at row $j$ column $k$ with colour $i$. 

This idea can be extended to the popular Sudoku puzzles. A Sudoku is a $9 \times 9$ Latin square with the additional constraints that in the nine $3 \times 3$ ``boxes'' every colour is allowed only once. Using quadruples rather than triplets, this problem can be translated to set packing in a similar fashion.


\paragraph{Other applications} Three real life applications one could think of is the assignment of crew members to airplanes \cite{Airplanes}, the generation of a coalition structure in multiagent systems \cite{Coalition} and determining the winners in combinatorial auctions to maximise the profit \cite{Auction1,Auction2,Auction3}.

\section{Related problems}\label{sec:RelatedProblems}

Set packing is highly related to the hypergraph matching problem and the independent set problem. This section surveys these relations and some special cases of set packing.

\subsection{Hypergraph matching}\label{subsec:Hypergraphs}

Set packing and hypergraph matching are really the same problem with different names. This subsection contains some background on hypergraphs and hypergraph matchings and relates these notions to set packing and $k$-set packing.

\paragraph{Hypergraphs} A hypergraph $H$ is a pair $H=(V,E)$ where $V$ is the set of vertices and $E$ is the set of hyperedges. A hyperedge $e \in E$ is a nonempty subset of the vertices. In a weighted hypergraph, every hyperedge $e \in E$ is associated with a weight $w(e)$. A hyperedge $e \in E$ is said to contain, cover or be incident to a vertex $v \in V$ when $v \in e$.

For a vertex $v$, $\delta(v)$ denotes the set of hyperedges incident to it. The degree of a vertex $v$ is $|\delta(v)|$. 
The cardinality of a hyperedge is the number of vertices it contains. When every vertex has the same degree $k$, the hypergraph is called $k$-regular. When every hyperedge has the same cardinality $k$, the hypergraph is $k$-uniform. A graph is thus a 2-uniform hypergraph. In a graph, every edge is a subset of two vertices.

\paragraph{Matchings} A hypergraph matching is a subset of the hyperedges $M \subseteq E$ such that every vertex is covered at most once, i.e. the hyperedges are mutually disjoint. This generalises matchings in graphs. From now on, with matching we mean a hypergraph matching. The cardinality of a matching is the number of hyperedges it contains. 
A matching is called maximum if it has the largest cardinality of all possible matchings. 
In the hypergraph matching problem, a hypergraph is given and one needs to find the maximum matching.
\begin{quote}
\begin{bf}Hypergraph matching problem\end{bf}\\
Given: a hypergraph $H = (V,E)$ \\
Find: a maximum collection of mutually disjoint hyperedges.
\end{quote}
%
\paragraph{$k$-partite hypergraphs}
The notion of bipartiteness in graphs can be generalised in a hypergraph to the concept of $k$-partiteness. A hypergraph $H = (V,E)$ is called $k$-partite if the set of vertices $V$ can be partitioned into $k$ classes $V_1, \ldots, V_k$ such that every hyperedge $e \in E$ touches exactly one vertex from every vertex class. A $k$-partite hypergraph is $k$-uniform by definition. The hypergraph matching problem on a $k$-partite graph is also called the $k$-dimensional matching problem. This generalises the bipartite matching on graphs where $k=2$ and it is a restricted version of hypergraph matching on a $k$-uniform hypergraph. In particular the 3-dimensional matching problem is well studied in literature. 

\paragraph{Relation to set packing} Let a set packing instance $S = (\mathcal{U},\mathcal{C})$ be given. Define a hypergraph $H = (\mathcal{U}, \mathcal{C})$, calling the elements in $\mathcal{U}$ vertices and the subsets in $\mathcal{C}$ hyperedges. Then the set packing problem on $S$ is exactly the same as the hypergraph matching problem on $H$. An instance of the $k$-set packing problem translates to an instance of the hypergraph matching problem on a $k$-uniform hypergraph, as every subset (hyperedge) contains exactly $k$ elements. 

Results for set packing thus immediately apply to hypergraph matching and vice versa.

\subsection{Independent set}\label{subsec:IndependentSet}

\paragraph{Relation to set packing} Set packing is also closely related to the (maximum) independent set problem. Given a graph $G = (V,E)$, an independent set is a subset of vertices that are mutually non-adjacent, i.e. a set of vertices whose induced subgraph does not contain any edge. This subsection treats this problem and its relation to set packing.
\begin{quote}
\begin{bf}Independent set problem\end{bf}\\
Given: a graph $G = (V,E)$ \\
Find: a maximum collection of mutually non-adjacent vertices.
\end{quote}
Let a set packing instance $I = (\mathcal{U},\mathcal{C})$ be given. Create the conflict graph $G = (\mathcal{C},E)$ 
for the instance $I$. Then the edge set $E$ captures all the intersections between the subsets. Any packing of $\mathcal{C}$ now corresponds to an independent set in $G$: as a packing is mutually disjoint by definition, the corresponding vertices are non-adjacent. On the other hand, any independent set in $G$ corresponds to a packing of $\mathcal{C}$. Finding a maximum packing is thus equivalent to finding a maximum independent set in the conflict graph.

\paragraph{Relation to $k$-set packing} Now consider a $k$-set packing instance $(\mathcal{U}, \mathcal{C})$ where every subset in $\mathcal{C}$ contains exactly $k$ elements from $\mathcal{U}$. Consider a fixed subset $S$ and look at the set $N(S)$ of all subsets intersecting $S$. There can be at most $k$ subsets in $N(S)$ that are mutually disjoint, when each subset intersects $S$ in a distinct element. Thus within $N(S)$, the maximum packing has at most cardinality $k$.

Now consider the corresponding conflict graph $G$. $S$ is a vertex in $G$ and $N(S)$ is the set of all neighbours of $S$. By the previous reasoning, $N(S)$ contains an independent set of size at most $k$. Thus in the neighbourhood of any vertex of $G$ there are at most $k$ mutually non-adjacent vertices. In other words, the conflict graph is $k+1$-claw free: it does not contain $K_{1,k+1}$ as an induced subgraph, where $K_{n,m}$ is the complete bipartite graph on $n$ and $m$ vertices. A vertex can have any number of neighbours, but the maximum number of mutually non-adjacent neighbours is $k$. 

\paragraph{Independent set on bounded degree graphs} On the other hand, the maximum independent set problem in graphs $G$ where all degrees are bounded by $k$ is a special case of $k$-set packing. Map every vertex in $G$ to a subset in $\mathcal{C}$ and map every edge to a distinct element in $\mathcal{U}$. Let a subset in $\mathcal{C}$ (a vertex in $G$) contain all elements in $\mathcal{U}$ (edges in $G$) that are incident to it. Now every subset in $\mathcal{C}$ contains at most $k$ elements because every vertex in $G$ had degree at most $k$. Again, by adding dummy elements to the subsets with less than $k$ elements, every subset in $\mathcal{C}$ has exactly $k$ elements. As such, the $k$-set packing problem generalises the maximum independent set problem on bounded degree graphs.

Section \ref{sec:IS} discusses the relation between the results on set packing and independent set more thoroughly.

\subsection{Special cases of set packing}

\paragraph{Clique and triangle packing} There are some special cases of set packing we would like to mention. Instead of studying the set packing problem with a bound on the cardinality of every subset, it is also possible to study the problem with certain given structures on the given sets. One example is the clique packing problem, where one is given some graph $G$. A packing of $G$ is a collection of pairwise vertex-disjoint subgraphs of $G$, each of which is isomorphic to a clique. A packing is said to cover an edge of $G$ if one of the subgraphs contains that edge. In the clique packing problem, one tries to maximise the number of covered edges \cite{CliquePacking}. The special case where every clique is of size 3 is called the triangle packing problem \cite{TrianglePacking2,TrianglePacking1}. The set packing problem generalises these problems.


\paragraph{Tree-like weighted set packing} The tree-like weighted set packing problem is another subproblem with an additional structure on the sets that allows for better results that even extend to the weighted version. In this problem, the subsets in $\mathcal{C}$ can be organised into a forest $F$ satisfying the following properties. Every vertex in $F$ corresponds to one subset in $\mathcal{C}$. If a vertex $Y$ is a child of a vertex $X$, then $Y$ is a subset of $X$. If $Y$ and $Z$ are distinct children from $X$, then $Y$ and $Z$ are disjoint. And the roots of all the trees are pairwise disjoint. When the sets are structured like this, the problem can be solved exactly in cubic time using a dynamic programming algorithm \cite{TreeSP}.

\paragraph{$k$-dimensional matching} Perhaps the best known special case of $k$set packing is the $k$-dimensional matching problem. As was mentioned in Subsection \ref{subsec:Hypergraphs}, this is the hypergraph matching problem on a $k$-partite hypergraph, which is a stronger structure than a $k$-uniform hypergraph. In particular 3-dimensional matching is well-known. All results for $k$-set packing immediately apply to $k$-dimensional matching. 



\section{Local search}\label{sec:LocalSearch}

\paragraph{Background} A lot of the approximation algorithms for $k$-set packing are related to the notion of local search techniques. In this section we survey this related topic. Local search is a natural heuristic to tackle difficult problems. The most basic solution approaches for a discrete optimization problem are generating just one solution (e.g. using a priority-based heuristic), enumerating implicitly all possible solutions (e.g. with branch and bound techniques) and generating several solutions and choosing the best. Local search is a heuristic that finds from an initial solution a sequence of solutions that use the previous solution for the next solution, with the goal to increase the objective value from the initial solution to a better value. A neighbourhood structure is defined on the solutions that somehow resembles similar solutions. Until some stopping criterion is reached, in every iteration a solution from the neighbourhood of the current solution is chosen.

A local search heuristic usually consists of four main elements: a method to calculate an initial solution, a definition of the neighbourhood of a solution, a criterion to choose a solution from any neighbourhood, and a stopping criterion. The method to calculate an initial solution is needed to start a local search, but is usually not seen as a part of the local search technique itself. For example, Chapter \ref{chap:Results} contains a lot of results stating that a locally optimal solution with respect to some neighbourhood structure achieves a certain approximation guarantee, without going into the details of how to find an initial solution or sometimes even how to iterate to the next solution.

\paragraph{Results} Local search techniques are widely used in practice, the most common example being for the traveling salesman problem. However, there are not much positive results on local search in theory. In fact, a complexity class called $\mathcal{PLS}$ (polynomial time local search) has been defined \cite{PLS}. Problems that are $\mathcal{PLS}$-complete are as hard as the hardest local optimization problems. A lot of well-known heuristics are known to be $\mathcal{PLS}$-complete. There are even some problems for which it has been proved that there is no sequence, of less than exponential length, of improvements ending in a locally optimal solution \cite{Halldorsson}. To make matters worse, there are popular heuristics with good average-case behaviour that perform quite bad at relatively easy problems. For instance, the algorithm at the heart of the well-known simulated annealing performs quite poor at finding a maximum matching (which is even in $\mathcal{P}$).

On the positive side, set packing does not fit this negative theoretic framework. Currently all approximation algorithms for both the weighted and the unweighted $k$-set packing problem use local search at their core. 
However, there are also some results on the limits of the approximation guarantees using local search.
The next chapter proceeds with approximation algorithms for $k$-set packing 
and the bounds on what is achievable using this technique. 
\chapter{Current results}\label{chap:Results}

\section{Unweighted approximations}\label{sec:Unweighted}

In this section we briefly mention the state of the art of unweighted approximations for the $k$-set packing problem. There has been a long line of work in this area and Subsection \ref{subsec:HurkensSchrijver} starts with the first $\frac{k}{2} + \varepsilon$-approximation given already in 1989 by Hurkens and Schrijver \cite{HurkensSchrijver}. We give a proof and some intuition for the case of a 2-locally optimal solution.

Subsection \ref{subsec:UnweightedQuasi} continues with a $\frac{k+2}{3}$-approximation \cite{Halldorsson} and a $\frac{k+1}{3} + \varepsilon$-approximation \cite{Mastrolilli}. These search for improving sets of size $O(\log n)$, so the improved approximation guarantee is at the cost of the running time, which is $n^{O(\log n)}$ for both algorithms (also called quasi-polynomial running time).


Only this year these algorithms were adapted to run in polynomial time \cite{Cygan,Sviridenko}. These results are mentioned in Subsection \ref{subsec:UnweightedPoly}, together with an adaptation of the $\frac{k+2}{3}$-approximation to turn it into another $\frac{k+1}{3}$-approximation \cite{FurerYu}. These are currently the best results.

Next, Section \ref{sec:Weighted} treats the current state of the art of the weighted approximation algorithms, followed by the parameterized algorithms and the hardness results in Sections \ref{sec:Parameterized} and \ref{sec:Hardness}. The results on the linear programming and semidefinite programming relaxations are postponed to the next chapters, as these will be treated in more detail and we will provide a new bound for their integrality gap.

\subsection{The first approximation algorithm}\label{subsec:HurkensSchrijver}

The unweighted $k$-set packing problem can be solved in polynomial time for $k=2$ \cite{Minty}, so from now on assume $k \geq 3$.

\paragraph{2-locally optimal solution} Perhaps the easiest local search technique to try is to iteratively search for an improving set of size 2. This is either one subset in $\mathcal{C}$ that does not intersect the current solution $\mathcal{A}$ or a pair of subsets that intersect at most one subset of $\mathcal{A}$. Starting with the empty solution and iteratively adding improving sets of size 2 yields an algorithm that is $\frac{k+1}{2}$-approximate \cite{Weighted2,Halldorsson}.

\paragraph{First approximation algorithm} A natural extension of this search technique is to search for improving sets of larger cardinality. If instead of improving sets of size 2 improving sets of constant size $s$ are searched for increasing values of $s$, it is possible to obtain a polynomial time approximation ratio of $\frac{k}{2} + \varepsilon$. This was discovered by Hurkens and Schrijver \cite{HurkensSchrijver} and this was actually the first approximation algorithm for $k$-set packing. 
Even though the problem was well-studied in the years that followed, this remained the best polynomial time approximation for over 20 years.

To be precise, they proved the following.
\begin{theorem}\label{thm:HurkensSchrijver}
(\cite[Theorem 1]{HurkensSchrijver}) Let $E_1, \ldots, E_m$ be subsets of the set $V$ of size $n$, such that:
\begin{enumerate}
  \item Each element of $V$ is contained in at most $k$ of the sets $E_1, \ldots, E_m$;
  \item Any collection of at most $t$ sets among $E_1, \ldots, E_m$ has a system of distinct representatives\footnote{A system of distinct representatives of sets $\{E_i\}$ is a set $\{e_i\}$ such that $e_i \in E_i$ for all $i$ and $e_i \neq e_j$ for $i \neq j$.}.
\end{enumerate}
Then, we have the following:
\begin{equation*}
\begin{alignedat}{2}
\frac{m}{n} \leq \frac{ k (k-1)^r - k }{ 2 (k-1)^r - k } \quad & \textrm{ if } t = 2r-1; \ \\
\frac{m}{n} \leq \frac{ k (k-1)^r - 2 }{ 2 (k-1)^r - 2 } \quad & \textrm{ if } t = 2r.
\end{alignedat}
\end{equation*}
\end{theorem}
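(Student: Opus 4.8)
The plan is to recast Property 2 through Hall's marriage theorem and then exploit an alternating expansion between the sets and the elements. First I would observe that ``every collection of at most $t$ sets has a system of distinct representatives'' is equivalent, via Hall's theorem, to the expansion statement: for every subfamily $S \subseteq \{E_1,\dots,E_m\}$ with $|S| \le t$ one has $\bigl|\bigcup_{i \in S} E_i\bigr| \ge |S|$. Indeed, every subfamily of a family of size at most $t$ again has size at most $t$, so Hall's condition for such a family is exactly the $\le t$ expansion condition. From Property 1 I would also record the elementary incidence count $\sum_{i=1}^m |E_i| = \sum_{v \in V} \#\{i : v \in E_i\} \le k n$, obtained by double counting the set--element incidences; this is the only place Property 1 enters.

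Next I would work out the base case $t = 2$ (that is, $r = 1$) to expose the mechanism, since there the target reduces to $\tfrac{m}{n} \le \tfrac{k+1}{2}$. Let $a$ be the number of \emph{singletons} among the $E_i$. The $t=2$ expansion condition forces any two distinct singletons to represent distinct elements of $V$, so the singletons occupy $a \le n$ distinct elements, while every non-singleton has size at least $2$. Hence $2m - a \le \sum_i |E_i| \le kn$, and since $a \le n$ this gives $2m \le (k+1)n$, which is exactly the claimed bound. The point to carry forward is that the expansion condition makes the \emph{small} sets scarce, and it is the interplay of this scarcity with the incidence count that produces the ratio.

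For general $t$ I would iterate this idea as a layered breadth-first expansion of depth $r$. Starting from an element layer, I would alternately (i) adjoin every set meeting the current element layer and (ii) adjoin the elements of the newly added sets, maintaining nested families $S_0 \subseteq S_1 \subseteq \cdots$ on the set side and $T_0 \subseteq T_1 \subseteq \cdots$ on the element side. As long as the accumulated number of sets stays at most $t$, the expansion condition forces each element layer to grow, while Property 1 (degree at most $k$) bounds how fast the set layers can grow; the net effect is a multiplicative factor of $(k-1)$ for each completed pair of half-steps, which is the source of the $(k-1)^r$ term. The two parities of $t$ correspond to whether the final half-step is a set-step or an element-step, and this is what splits the constant into $-k$ (for $t = 2r-1$) versus $-2$ (for $t = 2r$). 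Solving the resulting size recursion and feeding it back into the incidence count $\sum_i |E_i| \le kn$ yields the closed forms $\tfrac{k(k-1)^r - k}{2(k-1)^r - k}$ and $\tfrac{k(k-1)^r - 2}{2(k-1)^r - 2}$.

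The main obstacle is the bookkeeping of this alternating expansion. I must orchestrate the layers so that the cumulative set count never exceeds $t$ --- otherwise the expansion condition, which is guaranteed only for subfamilies of size at most $t$, is no longer available --- while still forcing the full geometric growth rate $(k-1)$ per round; and I must control the overlaps between successive layers, since elements and sets may be reused, so that the degree bound is applied in the correct direction and the counts are neither double counted nor lost. Pinning down the exact constants, and in particular verifying that the parity of $t$ produces precisely the $-k$ versus $-2$ split, is the delicate part of the argument; the $t = 2$ computation above is the degenerate instance in which all of this collapses to a one-line count.
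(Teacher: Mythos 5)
The paper does not prove this theorem in full generality: it cites the general statement to Hurkens and Schrijver and only writes out the case $t=2$. For that case your argument is correct and is essentially the same argument as the paper's, just translated through Hall's theorem into the abstract set-system language: your count of singletons $a$ is the paper's $|\mathcal{B}_1|$ (the sets of the optimum meeting the local optimum in exactly one set), your observation that distinct singletons must occupy distinct elements (hence $a \le n$) is the paper's use of 2-local optimality to get $|\mathcal{B}_1| \le |\mathcal{A}|$, and your incidence count $\sum_i |E_i| \le kn$ is the paper's $|\mathcal{B}_1| + 2|\mathcal{B}_2| \le k|\mathcal{A}|$. So up to the point the paper actually proves, you match it.

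For general $t$, however, what you have is a plan, not a proof. The layered alternating expansion is a reasonable intuition for where the factor $(k-1)^r$ comes from, but the step you explicitly defer --- orchestrating the layers so that the cumulative set count never exceeds $t$ while still forcing the full geometric growth rate, controlling reuse of sets and elements across layers, and extracting the exact constants $-k$ versus $-2$ from the parity of $t$ --- is precisely the entire content of Hurkens and Schrijver's argument, which the paper itself describes as ``very keen yet complicated'' and declines to reproduce. As written, your sketch gives no verifiable derivation of the two closed-form bounds; nothing in it is visibly wrong, but nothing beyond the $t=2$ computation is yet checkable.
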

\paragraph{Proof for $t=2$} They provide a very keen yet complicated proof. We will give the proof and some intuition for the case $t=2$, which establishes that any 2-locally optimal solution is at most a factor of $\frac{k+1}{2}$ away from the optimal solution. Let $\mathcal{A}$ be any 2-locally optimal solution and $\mathcal{B}$ be any optimal solution. Denote by $\mathcal{B}_1$ and $\mathcal{B}_2$ the sets in $\mathcal{B}$ that intersect $\mathcal{A}$ in one set respectively at least two sets. As $\mathcal{A}$ is 2-locally optimal there are no sets in $\mathcal{B}$ that do not intersect any set of $\mathcal{A}$. Note that every set in $\mathcal{A}$ intersects with at most $k$ sets in $\mathcal{B}$. So
\begin{equation*}
|\mathcal{B}_1| + 2|\mathcal{B}_2| \leq k|\mathcal{A}|.
\end{equation*}
Since $\mathcal{A}$ is 2-locally optimal, $|\mathcal{B}_1| \leq |\mathcal{A}|$ and hence
\begin{equation*}
2|\mathcal{B}| = |\mathcal{B}_1| + 2|\mathcal{B}_2| + |\mathcal{B}_1| \leq k|\mathcal{A}| + |\mathcal{A}| = (k+1)|\mathcal{A}|,
\end{equation*}
and therefore $\frac{|\mathcal{B}|}{|\mathcal{A}|} \leq \frac{k+1}{2}$.


\paragraph{Intuition} Intuitively the argument boils down to the following. Look at the intersection graph of the current 2-locally optimal solution $\mathcal{A}$ and some optimal packing $\mathcal{B}$. The sets in $\mathcal{B}$ that conflict with just one set in $\mathcal{A}$ are not particularly interesting as they do not form an improving set. Let us delete these sets from the intersection graph together with the sets in $\mathcal{A}$ they intersect. Because $\mathcal{A}$ is a 2-locally optimal solution, the sets in $\mathcal{B}$ conflicting with two sets in $\mathcal{A}$ still have degree at least 1, while the degrees of the other sets can be anything. That is fine, because in general when every degree is at most two we are done: the number of edges between $\mathcal{A}$ and $\mathcal{B}$ is then at most $2m$ and this number is upper bounded by $nk$ and hence $2m \leq nk$. So the core of the argument really consists of small conflicts posing no problem and concentrating on the remainder of the graph. The iterative process of deleting these sets cannot go on forever, because there are only so many vertices in $\mathcal{A}$ and possibly much more in $\mathcal{B}$.

\subsection{Quasi-polynomial time algorithms}\label{subsec:UnweightedQuasi}


\paragraph{A $\frac{k+2}{3}$-approximation} Halld\'{o}rsson \cite{Halldorsson} was the first to find $\frac{k+2}{3}$-approximate solutions. His clever insight is that using a search space of improving sets of size $O(\log n)$ allows to improve the approximation guarantee from about $\frac{k}{2}$ to about $\frac{k}{3}$. However, in a straightforward implementation of an iterative algorithm with such a search space, the running time would be quasi-polynomial, i.e. $n^{O(\log n)}$. The deduction of this running time is straightforward: one can find an improving set of size $O(\log n)$ in time $n^{O(\log n)}$, and the number of iterations is trivially upper bounded by $n$.

\paragraph{A $\frac{k+1}{3} + \varepsilon$-approximation} This year, Cygan, Grandoni and Mastrolilli \cite{Mastrolilli} improved upon this result and showed that the approximation guarantee of an $O(\log n)$-locally optimal solution can in fact be bounded by $\frac{k+1}{3} + \varepsilon$. The running time is still quasi-polynomial. Their analysis is more involved and they use a lemma from \cite{BermanMIS} as a black box. In Chapter \ref{chap:LP} the same lemma is used to prove an improved bound on the integrality gap of an LP formulation for the $k$-set packing problem.

\subsection{Polynomial time algorithms}\label{subsec:UnweightedPoly}


\paragraph{A $\frac{k+2}{3}$-approximation} Sviridenko and Ward \cite{Sviridenko} recently established an elegant polynomial time approximation algorithm that beats the approximation guarantee of $\frac{k}{2}$ and this is the first polynomial time improvement over the $\frac{k}{2} + \varepsilon$ result from \cite{HurkensSchrijver}. Their approximation guarantee is $\frac{k+2}{3}$.

Their key insight is that using improving sets with a special structure in the analysis of Halld\'{o}rsson \cite{Halldorsson} allows to reduce the running time. They look at a graph with a vertex for every set in the current solution $\mathcal{A}$. Two vertices $S,T$ are connected if there is a set in $\mathcal{C} \setminus \mathcal{A}$ that intersects $\mathcal{A}$ only in $S$ and $T$. Possibly $S = T$, which gives rise to a loop; so every set that intersects with at most two sets in $\mathcal{A}$ corresponds to one edge. Then they define three canonical improvements, which are three types of connected graphs containing two distinct cycles, thus having more edges than vertices. They iteratively search for canonical improvements, and when one is found they remove the vertices from $\mathcal{A}$ and add the edges from $\mathcal{C} \setminus \mathcal{A}$ to the current solution.

Another key insight is then to combine this with the famous color-coding technique \cite{ColorCoding} and an application of dynamic programming, which allows them to efficiently search for these canonical improvements. The result is a new polynomial time approximation after more than 20 years.

\paragraph{A $\frac{k+1}{3} + \varepsilon$-approximation} Only two months later the polynomial time approximation guarantee was cleverly improved to $\frac{k+1}{3} + \varepsilon$ by Cygan \cite{Cygan}. He adapted the quasi-polynomial $\frac{k+1}{3}$-approximation from \cite{Mastrolilli}, also with the sharp insight that structured sets reduce the search space and the running time without losing the good approximation guarantee. His algorithm searches for elegant structures that he calls improving sets of bounded pathwidth. Again using the color-coding technique, the algorithm can search the space of $O(\log n)$ improving sets of bounded pathwidth in polynomial time.


\paragraph{Another $\frac{k+1}{3} + \varepsilon$-approximation} Independently, another $\frac{k+1}{3} + \varepsilon$ polynomial time approximation algorithm was found four months later by F\"{u}rer and Yu \cite{FurerYu}. This is an ingenious improvement of the polynomial time $\frac{k+2}{3}$-approximation of \cite{Sviridenko}. Let $\mathcal{A}$ denote the current solution again. Define a bipartite auxiliary graph $G_\mathcal{A}$ with a vertex for every set in $\mathcal{A}$ in one colour class and a vertex for every set in $\mathcal{C} \setminus \mathcal{A}$ in the other colour class. Connect two vertices from both colour classes if and only if the sets corresponding to them intersect. Indeed, this is the conflict graph of the instance with only a subset of the edges.

The algorithm starts by looking for improving sets of constant size as in \cite{HurkensSchrijver}. Then they guess $O(\log n)$ sets $\mathcal{I}$ in $\mathcal{C} \setminus \mathcal{A}$ that might form an improving set. Now consider any collection of sets $\mathcal{I}_3$ within $\mathcal{I}$ of degree at least 3 in the graph $G_\mathcal{A}$. For such a collection, the algorithm looks for a sequence of replacements that swaps $t$ sets in $\mathcal{A}$ with $t$ sets outside $\mathcal{A}$. The idea is to increase the number of sets in $\mathcal{C} \setminus \mathcal{A}$ that intersect $\mathcal{A}$ in at most two sets. If the degree of the vertices in $\mathcal{I}_3$ now drops to 2 or less, they check whether these vertices together with sets of degree at most 2 in $\mathcal{I}$, form a canonical improvement as in \cite{Sviridenko}. If so, this canonical improvement is added to obtain a solution of larger cardinality.

Again the color-coding technique is exploited to find all structures in polynomial time. In comparison, the local improvements in this paper are less general than in the other $\frac{k+1}{3} + \varepsilon$ polynomial time approximation by Cygan. This results in the fact that this new analysis is simpler than the one in \cite{Cygan}.



\section{Weighted approximations}\label{sec:Weighted}

This section considers approximation algorithms for weighted $k$-set packing. In the weighted $k$-set packing problem, every set $S$ in $\mathcal{C}$ is associated with a weight $w(S)$ and the objective is to maximise the total weight of the packing rather than its cardinality. 

Also the weighted $k$-set packing problem can be solved in polynomial time for $k=2$ \cite{Minty2}, so from now on assume $k \geq 3$.

\subsection{Three close to $k$-approximations}\label{subsec:Weighted1}

\paragraph{A $k$-approximation} The first approximation algorithm for weighted $k$-set packing is due to Hochbaum \cite{Hochbaum} who achieved an approximation guarantee of $k$. His algorithm first preprocesses the input and then more or less greedily adds sets to the current solution.

\paragraph{A $k - 1 + \frac{1}{k}$-approximation} A weighted $k-1+\frac{1}{k}$-approximation algorithm was found in \cite{Weighted2}. They show that the simple greedy algorithm approximates the weighted problem within a factor of $k$, where the greedy approach successively selects the subset with the highest weight from all subsets that do not intersect any selected subset so far. Using a similar local search analysis as in the unweighted case they are able to improve this bound to $k - 1 + \frac{1}{k}$.

\paragraph{A $k - 1 + \varepsilon$-approximation} This result was slightly improved by Arkin and Hassin \cite{Weighted1}. Their setting is more general in the following sense. They consider a bipartite graph $G = (U \cup V, E)$ where every vertex is associated with a weight. They use the shorthand notation $w(X) = \sum_{x \in X} w(x)$ for a subset of the vertices $X$, and write $E_v$ for the neighbourhood $N(v)$ of some vertex $v$. Then analogously to \cite{HurkensSchrijver} they assume that $|E_v| \leq k$ for all $v \in V$ and that any subset $R \subseteq U$ of at most $t$ nodes satisfies $w(R) \leq w\left( \bigcup_{u \in R} E_r \right)$.

Their main theorem is that $\frac{w(U)}{w(V)} \leq k - 1 + \frac{1}{t}$. The $k - 1 + \frac{1}{k}$-approximation from \cite{Weighted2} is the case where $k = t$, so this result is slightly more general. Effectively, they reached an approximation guarantee of $k - 1 + \varepsilon$.


\subsection{A $\frac{4k+2}{5}$ and a $\frac{2k+2}{3}$-approximation}\label{subsec:Chandra}

\paragraph{The algorithm} Chandra and Halld\'{o}rsson \cite{Chandra} were the first to beat this bound and obtain an approximation guarantee of $\frac{2k+2}{3}$. They combine the greedy algorithm to find an initial solution and local search techniques to improve upon this solution. Essentially they show that either the greedy solution is already good, or the local search improves it quite well. They reduce the problem to the independent set problem in a $k+1$-claw free graph $G$, and their local search finds an improving claw. This is a claw $C$ whose center vertex $v$ is in the current solution $A$ in $G$, and hence the talons $T_C$ of the claw (its other vertices forming an independent set) are not in $A$. They show that using local search to find any improving claw leads to an approximation guarantee of $\frac{4k+2}{5}$, while a local search for the best local improvement is $\frac{2k+2}{3}$-approximate. With the best improvement they mean the claw $C$ with the maximum ratio between the sum of the weights of $T_C$ and the sum of the weights of the neighbours of $T_C$ in $A$.

\paragraph{Time complexity} To show that the algorithm runs in polynomial time, they need to modify it a bit. First they find a solution $A$ using the greedy approach. Then they rescale the weight function such that $w(A) = kn$. Then they keep searching for the best local improvement using the weight function $\lfloor w \rfloor$. Now each iteration increases $\lfloor w \rfloor (A)$ by at least one. Since $\mathcal{A}$ is at most a factor of $k$ away from the optimal solution, the number of iterations is bounded by $k^2n$. Since in every turn they only inspect a polynomial number of candidates, the algorithm now runs in polynomial time.

\subsection{A close to $\frac{2k}{3}$-approximation}\label{subsec:2k/3}

Berman and Krysta \cite{BermanWeighted2} improved the approximation guarantee slightly. For every $k$ they find the optimal value for $\alpha$ such that any 2-locally optimal solution with respect to the ``misdirected'' weight function $w^\alpha$ achieves the best approximation guarantee. Surprisingly there are only three distinct values for $\alpha$ that cover all values of $k$. There is an optimal value of $\alpha$ for $k=3$ which yields an approximation guarantee of $\frac{2k}{3} \approx 0.66667k$, there is an optimal value for $k=4$ which approximates the problem within $\frac{\sqrt{13}-1}{4}k \approx 0.65139k$ and there is a value that is optimal for $k \geq 5$, resulting in a $2^{-\log_3 2}k \approx 0.64576k$-approximation.

\subsection{A $\frac{k+1}{2}$-approximation algorithm}\label{subsec:Berman}

Currently the best approximation algorithm for weighted $k$-set packing is from Berman \cite{Berman}, also using another objective function $w^2$ rather than $w$. Its approximation guarantee is $\frac{k+1}{2}$. Berman defines a function charge$(u,v)$ and searches for minimal claws that satisfy some condition on this charge function, called nice claws. He shows that if this algorithm terminates it achieves the desired approximation guarantee. To show that the algorithm terminates, he shows that every nice claw improves the square of the weight function. Similar to the running time analysis of \cite{Chandra}, he shows that the number of iterations where $w^2$ improves is polynomially bounded. And when there is no more claw that improves $w^2$, there is no nice claw anymore, and hence the algorithm terminates. Therefore it runs in polynomial time and achieves the approximation guarantee $\frac{k+1}{2}$. Chronologically, this result came before the result from the previous subsection \cite{BermanWeighted2}.

Chapter \ref{chap:Weighted} dives into the details of this paper and we show a simplified and more intuitive proof of the fact that a nice claw improves $w^2$.

\section{Parameterized complexity}\label{sec:Parameterized}

\paragraph{Background} Due to the limited results in the area of approximation algorithms for a long time, people started to study the parameterized problem where the cardinality of the solution is assumed to be $m$. In the $k$-set $m$-packing problem the goal is to find $m$ disjoint sets, each of size $k$. The running time of exact algorithms for this problem can be written in the form $f(m)poly(n)$, moving the exponential dependency to the parameter $m$ instead of $n$. There have been huge improvements on the running time of these exact algorithms. This is in spite of the fact that the $k$-set $m$-packing problem is $W[1]$-complete with respect to the parameter $m$ \cite{DowneyFellows} (see Subsection \ref{subsec:Hardness2} for details about $W[1]$-completeness). These results contain 
some algorithms especially designed for the case $k=3$. Table~\ref{tab:3SP} in Appendix \ref{app:Parameterized} gives an overview of the results on the 3-set $m$-packing, and we refer to Table~\ref{tab:kSP} in Appendix \ref{app:Parameterized} for 
the more general $k$-set $m$-packing problem. Some results also extend to the weighted case, this is mentioned in the last column. We follow the convention from parameterized algorithms to let $O^*(f(m))$ denote $f(m)n^{O(1)}$.

\paragraph{Improvements} Some remarks in the last column of these tables require some explanation. Koutis' \cite{Koutis1} original result was an $O^*(2^{O(m)})$ time algorithm. In \cite{SmallColorCoding,GreedyLocalization} it was pointed out that the constants are huge in this approximation, showing the bound is at least $O^*(32000^{3m})$ when $k=3$. Like many results in this area, Koutis derandomised his algorithm using the color-coding technique from Alon, Yuster and Zwick \cite{ColorCoding}. In \cite{SmallColorCoding,GreedyLocalization} a new perfect hashing technique was introduced, with which Koutis' result could be improved to $O^*(25.6^{mk})$. A similar result is true for the deterministic algorithm by Fellows et al \cite{Fellows}. Its original bound was $exp(O(mk))$, which was actually $O^*((12.7D)^{3m})$ for some $D \geq 10.4$ (when $k=3$). This is improved to $O^*(13.78^{mk})$ using the new perfect hash function.

The first deterministic result of the form $O^*(g(m,k))$ for $k$-set $m$-packing is due to Jia, Zhang and Chen \cite{5.7kk}. Currently the best complexity result (randomised) result for $k$-set $m$-packing is $O^*(f(m,k))$ by Bj\"{o}rklund et al \cite{Bjorklund}. This function is not very readable or insightful, but behaves well for small $k$. For example, when $k=3$, we have $O^*(f(3,m)) \approx O^*(1.493^{3m})$, which is the result mentioned in Table~\ref{tab:3SP}. 
The function is 
strictly smaller than $O^*(2^{mk})$.

\paragraph{Hardness} Some of the fastest parameterized algorithms rely on group algebra theory. 
A variable is introduced for every element and a subset is the product of its variables. A packing then corresponds to a multilinear polynomial of degree $mk$. Koutis and Williams \cite{KoutisWilliams} showed that detecting such a polynomial cannot be done in their model in time faster than $O^*(2^{mk})$. These results still hold if the color-coding method is used (\cite{ColorCoding}) or the randomised divide-and-conquer approach (\cite{DivideAndConquer,SmallColorCoding}).

This is the only result on the limit of the time complexity of these parameterized algorithms. There are more hardness results known for the approximation algorithms of non-parameterized $k$-set packing. The next section captures an outline of these hardness results.

\section{Hardness results}\label{sec:Hardness}

Subsection \ref{subsec:Hardness0} considers some results on the general set packing problem. Subsections \ref{subsec:Hardness1}, \ref{subsec:Hardness2} and \ref{subsec:Hardness3} consider hardness results specifically for $k$-set packing. These are respectively a result on the hardness of approximation, on the non-existence of certain algorithms and on the limits of local search techniques for this problem.

\subsection{Hardness of set packing}\label{subsec:Hardness0}

Set packing is one of the standard packing problems, which are closely related to covering problems. Table \ref{tab:PackingCovering} lists some of these problems. Set packing is the most general of these packing problems, together with its LP-dual minimum set cover. We refer to \cite{Reductions} for a survey of these problems.
%
\begin{table}
\centering
\begin{tabular}{ll}
  \toprule
  \multicolumn{2}{c}{Covering-packing dualities} \\
  \midrule
  Minimum set cover    & Maximum set packing \\
  Minimum vertex cover & Maximum matching \\
  Minimum edge cover   & Maximum independent set \\
  \bottomrule
\end{tabular}
\caption{Every horizontal pair of problems are each other's LP-duals.}
\label{tab:PackingCovering}
\end{table}

The following results on the hardness of the problems also apply to the set packing problem. Arora et al showed that, unless $\mathcal{P} = \mathcal{NP}$, the vertex cover does not admit a polynomial time approximation scheme, even on bounded degree graphs \cite{noPTAS1}. Moreover, also assuming that $\mathcal{P} \neq \mathcal{NP}$, it has been shown that there is no constant-ratio polynomial time approximation scheme for independent set \cite{noPTAS2,NoPTAS3}. H{\aa}stad proved that set packing cannot be approximated within $n^{1 - \varepsilon}$, where $n$ is the number of sets, unless $\mathcal{NP} = \mathcal{ZPP}$ \cite{CliqueIsHard}. On the positive side, set packing can be approximated within a factor of $\sqrt{N}$ \cite{GeneralSP} (recall that $N$ is the number of elements in $\mathcal{U}$). H{\aa}stad's result also implies this is the best possible assuming $\mathcal{NP} \neq \mathcal{ZPP}$. 

\subsection{Hardness of approximation}\label{subsec:Hardness1}

As noted in Subsection \ref{subsec:UnweightedPoly}, currently the best approximation guarantee for unweighted $k$-set packing is $\frac{k+1}{3} + \varepsilon$. Hazan, Safra and Schwarz \cite{Hazan} showed the following hardness of approximation result. There is still a gap to bridge between the currently best approximation guarantee and this bound.

%
\begin{theorem}\label{thm:HardnessHazan}
(\cite{Hazan}) It is NP-hard to approximate $k$-set packing in polynomial time within a factor of $O \left( \frac{k}{\log k} \right)$.
\end{theorem}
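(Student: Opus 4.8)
The plan is to establish this not by a direct combinatorial argument but by a gap-preserving reduction from a canonical $\mathcal{NP}$-hard promise problem, since an $\Omega(k/\log k)$ inapproximability factor is far beyond what any unconditional construction could yield. First I would take as the source of hardness the \emph{Label Cover} problem (equivalently a two-prover one-round game): by the PCP theorem together with parallel repetition, for every constant $\gamma > 0$ it is $\mathcal{NP}$-hard to distinguish Label Cover instances in which all constraints can be satisfied from those in which no labeling satisfies more than a $\gamma$-fraction, with an alphabet whose size depends only on $\gamma$. The target is a polynomial-time reduction mapping a Label Cover instance to a $k$-set packing instance $(\mathcal{U},\mathcal{C})$ so that the maximum packing is large in the \emph{yes} case and small in the \emph{no} case, the ratio between the two being of order $k/\log k$.

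For the construction it is convenient to use the equivalence established in Subsection~\ref{subsec:IndependentSet}: a $k$-set packing instance is the same as maximum independent set in the $(k+1)$-claw-free conflict graph. I would attach to each constraint of the Label Cover instance a \emph{cloud} of candidate sets, one for each pair of labels consistent with that constraint; elements of $\mathcal{U}$ are shared so that, first, any two sets in the same cloud intersect, forcing at most one set per constraint into a packing, and second, two sets from different clouds intersect exactly when they disagree on a shared variable, so that any packing is forced to be globally \emph{consistent}. To calibrate the set size to a prescribed $k$ I would replicate each coordinate into $\Theta(k/\log k)$ parallel copies; this blow-up is precisely what engineers the intended factor. \emph{Completeness} is then the easy direction: a labeling satisfying every constraint selects one set from each cloud, and by construction these are pairwise disjoint, giving a packing of the full baseline size.

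The real content is \emph{soundness}: I must show that any packing noticeably larger than the completeness baseline can be decoded, by an averaging argument over the clouds, into a labeling satisfying more than a $\gamma$-fraction of the constraints, contradicting the hardness of the source problem. The difficulty is that a packing need not respect the intended cloud structure — it may use mutually disjoint but globally \emph{inconsistent} sets — and one has to bound how many such sets can coexist. This reduces to a combinatorial core lemma asserting that any family of pairwise-disjoint $k$-sets that is sufficiently ``spread'' (sunflower-free on the shared elements) is small relative to a consistent family. The entropy and union-bound accounting inside this lemma is exactly where a factor of $\log k$ is unavoidably lost, which is why the provable gap is $\Omega(k/\log k)$ and not $\Omega(k)$.

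I expect two points to be the main obstacles. The first is this soundness core lemma and its tight dependence on $k$: the gadget sets must be chosen — most naturally by the probabilistic method — so that consistent packings stay large while inconsistent ones are provably small, and the construction must then be derandomized (via explicit codes or the method of conditional expectations) so that the reduction remains polynomial time and the conclusion rests on $\mathcal{P} \neq \mathcal{NP}$ alone. The second is the bookkeeping: one has to match the replication parameter to the alphabet size and the gap $\gamma$ so that the final ratio is of order exactly $k/\log k$, and shaving the loss down to a single $\log k$ rather than a larger power of $\log k$ is what makes the construction delicate.
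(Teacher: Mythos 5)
Your proposal diverges from the route the paper outlines, and more importantly it leaves the entire technical content of the theorem unestablished. The paper (following Hazan, Safra and Schwarz) does not start from Label Cover: it reduces from MAX-3-LIN-$q$, using H{\aa}stad's result that gap-MAX-3-LIN-$q$-$[\frac{1}{q}+\varepsilon, 1-\varepsilon]$ is NP-hard, and the gadget attached to each variable is a $(q,\delta)$-Hyper-Graph-Edge-Disperser rather than a ``cloud'' of label-pair sets with consistency enforced by shared elements. The $\log k$ loss in that argument has a precise origin: the disperser gadgets force $k = \Theta(q \log q)$, and the gap obtained is $\Theta(q) = \Theta(k/\log k)$. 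It does not come from an entropy or union-bound accounting over sunflower-free families, as you speculate.

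The genuine gap in your proposal is the soundness step, which you yourself flag as an ``obstacle'' rather than prove. The assertion that any packing larger than the completeness baseline decodes to a good labeling ``reduces to a combinatorial core lemma'' about spread families is exactly the theorem in disguise; without an explicit gadget whose intersection pattern makes inconsistent packings provably small while keeping $k$ as small as $\Theta(q\log q)$, no gap of order $k/\log k$ follows. The replication of coordinates into $\Theta(k/\log k)$ parallel copies does not obviously preserve the gap either: replication increases the set size, but it also multiplies the ways a cheating packing can exploit partial consistency, and you give no calculation showing that these two effects balance to leave a factor of exactly $k/\log k$. Constructing and derandomizing precisely such a gadget is the substance of the cited paper, so as it stands the proposal is a plan for a proof, not a proof.
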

Here is a rough outline of their argument. Define gap-$P$-$[a,b]$ to be the following decision problem: decide on an instance of the decision problem $P$ whether there exists a fractional solution of size at least $b$ or whether every solution of the given instance is of fractional size smaller than $a$. Then if gap-$P$-$[a,b]$ is NP-hard, so is approximating $P$ within a factor smaller than $\frac{b}{a}$.

Define MAX-3-LIN-$q$ as the optimization problem where a set of linear equations over $GF(q)$ is given, each depending on 3 variables, and one needs to find an assignment maximising the number of satisfied equations. H{\aa}stad \cite{Hardness1} proved that gap-MAX-3-LIN-$q$-$[\frac{1}{q} + \varepsilon, 1 - \varepsilon]$ is NP-hard, which was a central result in the theory of hardness of approximation. 

In \cite{Hazan} they provide a polynomial time reduction from MAX-3-LIN-$q$ to $k$-uniform hypergraph matching (i.e. to $k$-set packing). They use what they call a $(q,\delta)$-Hyper-Graph-Edge-Disperser as a gadget (see \cite{Disperser} for background on dispersers and extractors). They construct such a gadget for every variable occurring in the equations. The set of all these gadgets is then the set of vertices of a hypergraph $H$. By a clever construction of the hyperedges they relate the satisfied equations to a packing in this hypergraph. This enables them to show that gap-$k$-SP-$[\frac{4}{q^3} + \varepsilon, \frac{1}{q^2} - 1]$ is NP-hard. As they have $k = \Theta(q \log q)$, they find a bound for the inapproximability factor for $k$-set packing of $\Omega(\frac{k}{\log k})$.

\subsection{Non-existence of certain algorithms}\label{subsec:Hardness2}

There are also some other hardness results known, more specifically on the existence of certain algorithms or on the limits of what is achievable using local search techniques. Cygan \cite{Cygan} proved the following.
\begin{theorem}\label{thm:HardnessCygan}
(\cite[Theorem 1.1]{Cygan}) It is $W[1]$-hard to search the whole space of improving sets of size $r$ efficiently.

More formally, unless $FPT = W[1]$, there is no $f(r)poly(n)$ time algorithm that given a family $\mathcal{C} \subseteq 2^\mathcal{U}$ of sets of size 3 and a disjoint subfamily $\mathcal{A} \subseteq \mathcal{C}$ either finds a bigger disjoint family $\mathcal{B} \subseteq \mathcal{C}$ or verifies that there is no disjoint family $\mathcal{B} \subseteq \mathcal{C}$ such that $|\mathcal{A} \setminus \mathcal{B}| + |\mathcal{B} \setminus \mathcal{A}| \leq r$.
\end{theorem}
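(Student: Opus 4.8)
The plan is to establish $W[1]$-hardness by a parameterized reduction from $k$-Clique, which is $W[1]$-hard when parameterized by the clique size $k$ \cite{DowneyFellows} (for cleaner gadgets one may work with the multicoloured variant instead). Given a graph $G$ and an integer $k$, I would construct in polynomial time a universe $\mathcal{U}$, a family $\mathcal{C}$ of $3$-element sets, a disjoint subfamily $\mathcal{A} \subseteq \mathcal{C}$, and a parameter value $r = r(k)$ depending on $k$ only, so that $G$ has a $k$-clique if and only if there is a disjoint family $\mathcal{B} \subseteq \mathcal{C}$ with $|\mathcal{B}| > |\mathcal{A}|$ and $|\mathcal{A} \setminus \mathcal{B}| + |\mathcal{B} \setminus \mathcal{A}| \le r$. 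Since $r$ is bounded by a function of $k$ alone and the construction is polynomial, any $f(r)\,\mathrm{poly}(n)$ algorithm solving the local search problem would decide $k$-Clique in $f(r(k))\,\mathrm{poly}(n)$ time, implying $FPT = W[1]$.

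The heart of the reduction is the gadget design. I would introduce a \emph{vertex gadget} for every vertex of $G$ and an \emph{edge gadget} for every edge, together with ``placeholder'' sets that make up the initial packing $\mathcal{A}$ and occupy exactly the universe elements the gadget sets need. The goal is to arrange $\mathcal{A}$ so that its only improvements within the radius-$r$ neighbourhood consist of \emph{switching on} $k$ vertex gadgets and all $\binom{k}{2}$ edge gadgets joining the chosen vertices: such a move deletes the placeholder sets these gadgets conflict with and inserts the $k + \binom{k}{2}$ gadget sets, displacing one fewer placeholder than the number of sets it adds and hence gaining a set. Consistency — that the edge gadget for $uv$ may be switched on only when the vertex gadgets of both $u$ and $v$ are the selected ones — is enforced by letting the edge gadget for $uv$ share universe elements with the vertex gadgets of $u$ and $v$, so the elements it needs are free precisely when $u$ and $v$ have been selected. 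The requirement that every set have size exactly $3$ is met by padding each gadget set with private dummy elements, exactly as in the reduction to $3$-set packing described in Section~\ref{sec:Problem}.

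With this design the completeness direction is immediate: a $k$-clique yields a consistent selection of $k$ vertex gadgets and $\binom{k}{2}$ edge gadgets, hence an improving move of symmetric difference $r = 2\bigl(k + \binom{k}{2}\bigr) - 1$. The soundness direction is where I expect the real difficulty to lie, and it is the step I would spend the most care on: I must rule out \emph{spurious} improving moves of size at most $r$ that do not encode a genuine clique — for instance moves that switch on an edge gadget without the matching vertex gadgets, or that exploit partial overlaps among placeholder sets to gain a set ``for free.'' The entire construction (how many placeholders each gadget must displace, which elements are shared, and how the dummy padding is chosen) has to be calibrated so that every disjoint family $\mathcal{B}$ strictly larger than $\mathcal{A}$ and within distance $r$ necessarily selects $k$ vertices and a complete set of edges among them. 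Getting this calibration right while keeping all sets of size exactly $3$ is the main obstacle, and verifying it amounts to a careful case analysis over which placeholder sets a bounded-radius move can touch.
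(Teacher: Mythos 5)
There is a genuine gap, and it is the one you yourself flag: the proposal is a reduction \emph{plan}, not a reduction. You name the source problem ($k$-Clique), the target parameter ($r = 2\bigl(k + \binom{k}{2}\bigr) - 1$), and the intended gadget roles, but you never actually specify the universe elements, the $3$-sets, or the placeholder packing $\mathcal{A}$. Both directions of the equivalence depend entirely on that unspecified construction. The completeness direction you call ``immediate'' is only immediate \emph{conditional} on a construction in which the $k + \binom{k}{2}$ gadget sets are pairwise disjoint and collectively conflict with exactly $k + \binom{k}{2} - 1$ placeholders --- a nontrivial combinatorial requirement when every set must have exactly $3$ elements and edge gadgets must share elements with two vertex gadgets each (a vertex gadget of a vertex of degree $d$ would naively need to share elements with $d$ edge gadgets, which a single $3$-set cannot do; this already forces a more elaborate design than one set per vertex). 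The soundness direction, which you correctly identify as the crux, is deferred entirely. As it stands the argument cannot be checked, because there is nothing concrete to check.

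For what it is worth, the thesis itself does not prove this theorem either; it is quoted as Theorem 1.1 of Cygan's paper, so there is no in-text proof to compare your approach against. Your high-level strategy --- a parameterized reduction from a $W[1]$-hard problem in which a clique corresponds to the unique kind of small improving set --- is the standard and correct way to attack such a statement, and is in the spirit of how results of this type are proved. But to turn it into a proof you would need to write down the gadgets explicitly and carry out the case analysis over all disjoint families $\mathcal{B}$ within symmetric distance $r$ of $\mathcal{A}$, showing that every improving one encodes a clique. Until that is done, the statement remains unproved by your argument.
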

\paragraph{Fixed parameter tractability} This theorem requires some explanation (see e.g. \cite{FPT1,DowneyFellows}). $FPT$ is the set of fixed parameter tractable problems, which are problems with input size $n$ that can be solved in time $f(k)n^{O(1)}$ for some function $f$. $FPT$ thus classifies problems according to multiple parameters rather than a single parameter, where it is crucial that functions $f(n,k)$ like $n^k$ are not allowed. If the value of $k$ is fixed, the problem is said to be parameterized, which is the setting of Subsection \ref{sec:Parameterized} with parameter $m$. 

\paragraph{$W$-hierarchy of $FPT$} The $W$-hierarchy $\bigcup_{t \geq 0} W[t]$ has been introduced to formalise the level of intractability of problems. We have $FPT = W[0]$ and $W[i] \subseteq W[j]$ whenever $i \leq j$. We do not go into the details and refer the interested reader to \cite{FPT1,DowneyFellows}.

Theorem \ref{thm:HardnessCygan} assumes $FPT \neq W[1]$, which is a widely believed assumption. In particular, this assumption is equivalent to the famous Exponential Time Hypothesis (ETH) \cite{ETH}. If $FPT = W[1]$ then the ETH fails and vice versa \cite{Cygan,FPT1}. When a problem is $W[1]$-complete (i.e. not solvable in polynomial time unless $FPT = W[t]$, here for $t=1$), this is strong evidence that the problem is probably not fixed parameter tractable. For example, 
the clique problem and the independent set problem are $W[1]$-complete and 
the dominating set problem and the set cover problem are $W[2]$-complete. 

The assumption in Theorem \ref{thm:HardnessCygan} is thus plausible, and if this is true then there is no polynomial time algorithm which searches the whole space of size $r$ improving sets.

\subsection{Limits of local search}\label{subsec:Hardness3}

Next to this result on the existence of such an algorithm, there is also a lower bound on the approximation guarantee that algorithms based on local search techniques can achieve. 
Sviridenko and Ward \cite{Sviridenko} proved the following.
\begin{theorem}\label{thm:HardnessSviridenko}
(\cite[Theorem 6.1]{Sviridenko}) The locality gap of a $t$-locally optimal solution is at least $\frac{k}{3}$, even when $t$ is allowed to grow on the order of $n$.

More formally, let $c = \frac{9}{2e^5k}$ and suppose that $t \leq cn$ for all sufficiently large $n$. Then there exist 2 pairwise disjoint collections of $k$-sets $\mathcal{A}$ and $\mathcal{B}$ with $|\mathcal{A}| = 3n$ and $|\mathcal{B}| = kn$ such that any collection of $a \leq t$ sets in $\mathcal{B}$ intersects with at least $a$ sets in $\mathcal{A}$.
\end{theorem}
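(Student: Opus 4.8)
The plan is to prove the statement by constructing the bipartite intersection graph between $\mathcal{A}$ and $\mathcal{B}$ probabilistically and then realizing that graph as an actual pair of packings. First I would reduce the set-theoretic requirement to a purely graph-theoretic one. Since both $\mathcal{A}$ and $\mathcal{B}$ are to be packings, their sets are internally disjoint, so every element shared between an $\mathcal{A}$-set $a$ and a $\mathcal{B}$-set $b$ corresponds to exactly one edge of the bipartite intersection graph $H$ on vertex classes $A$ (size $3n$) and $B$ (size $kn$); as each set has only $k$ elements, every vertex of $H$ has degree at most $k$. Writing $d_B, d_A$ for the degrees on the two sides, $kn\cdot d_B = 3n\cdot d_A$, so taking $d_B = 3$ forces $d_A = k$, the maximum allowed. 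I therefore aim for a $(3,k)$-biregular $H$. Conversely, given any simple $(3,k)$-biregular $H$, I would build the instance by assigning a fresh universe element $u_e$ to each edge $e$ of $H$ and $k-3$ further private elements to each $b \in B$; then $a$ equals its $k$ incident edge-elements, $b$ equals its $3$ incident edge-elements together with its private ones. Both collections are packings, they are disjoint as collections, $|\mathcal{A}|=3n$, $|\mathcal{B}|=kn$, and $b$ intersects $a$ precisely when they share an edge-element. Here I use the standing assumption $k \geq 3$.

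The heart of the argument is then the expansion property: every $S \subseteq B$ with $|S| = a \leq t$ satisfies $|N_H(S)| \geq a$. I would obtain such an $H$ by the probabilistic method, using the configuration model for a random $(3,k)$-biregular bipartite graph. For a fixed $S$ of size $a$, the event $|N_H(S)| < a$ forces all $3a$ edge-endpoints emanating from $S$ to land inside some $(a-1)$-subset $T \subseteq A$; bounding this by $\binom{3n}{a}\left(\tfrac{a}{3n}\right)^{3a}$ and union-bounding over the $\binom{kn}{a}$ choices of $S$, the standard estimate $\binom{m}{a}\le (em/a)^a$ collapses the total failure probability at each fixed scale to
\[
\binom{kn}{a}\binom{3n}{a}\left(\frac{a}{3n}\right)^{3a} \;\le\; \left( \frac{e^2 k a}{9 n} \right)^{a}.
\]
With $t = cn$ and $c = \frac{9}{2e^5 k}$, the base satisfies $\frac{e^2 k a}{9n} \le \frac{e^2 k c}{9} = \frac{1}{2e^3}$ for every $a \le t$, so each scale contributes at most $(2e^3)^{-a}$, and $\sum_{1 \le a \le t}(2e^3)^{-a} < 1$. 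Hence with positive probability $H$ has the required expansion at every scale up to $t$ simultaneously, and a simple such $H$ exists after conditioning.

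Finally I would record the consequence that motivates the theorem: with this $\mathcal{A}, \mathcal{B}$ the expansion property is exactly what makes $\mathcal{A}$ a $t$-locally optimal solution, since any candidate improving set $\mathcal{B}' \subseteq \mathcal{B}$ of size $a \le t$ has $|N_G(B',A)| \ge a$, whence $\left|A \cup B' \setminus N_G(B',A)\right| = |A| + a - |N_G(B',A)| \le |A|$; meanwhile $\mathcal{B}$ is a feasible packing of size $kn = \frac{k}{3}|\mathcal{A}|$, giving a locality gap of at least $\frac{k}{3}$.

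The main obstacle I expect is pushing the union bound out to values of $a$ that are linear in $n$ rather than merely logarithmic: this is precisely what forces the careful, somewhat lossy constant $c = \frac{9}{2e^5k}$, chosen so that the per-scale base stays bounded below $1$ across the entire range $a \le cn$. A secondary technical point is the configuration-model bookkeeping, namely controlling multi-edges and conditioning on $H$ being simple, so that the edge-to-element assignment genuinely yields $k$-sets with all the disjointness properties claimed above.
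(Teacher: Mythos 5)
The paper does not prove this theorem; it is quoted verbatim from Sviridenko and Ward with only a citation, so there is no in-paper proof to compare against. On its own terms your reconstruction is correct and complete. The reduction from sets to a $(3,k)$-biregular bipartite graph (one fresh element per edge, $k-3$ private elements per $\mathcal{B}$-set, using $k \geq 3$) is sound, and multi-edges in the configuration model are actually harmless here -- a double edge just means the two sets share two elements, and $|N_H(S)|$ still counts exactly the $\mathcal{A}$-sets met -- so the conditioning-on-simplicity step you flag as a worry can simply be dropped. The union bound is the heart of the matter and your arithmetic checks out: $\binom{kn}{a}\binom{3n}{a}(a/3n)^{3a} \leq (e^2ka/9n)^a \leq (2e^3)^{-a}$ for $a \leq \frac{9n}{2e^5k}$, and the geometric sum is about $1/39 < 1$. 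The fact that your computation reproduces the constant $c = \frac{9}{2e^5k}$ exactly is strong evidence that this is the same probabilistic expander argument as in the original source rather than a genuinely different route. The only cosmetic slip is writing $\binom{3n}{a}$ for the count of candidate neighbourhoods $T$ of size $a-1$; this is a valid upper bound on $\binom{3n}{a-1}$ in the relevant range $a \leq t \ll 3n/2$, but you should say so explicitly.
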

This result shows that even local search algorithms that are allowed to examine some exponential number of possible improvements at each stage cannot achieve an approximation guarantee better than $\frac{k}{3}$. This suggests that local search algorithms, which currently achieve an approximation guarantee of $\frac{k+1}{3} + \varepsilon$, are not a good approach to beat the approximation guarantee much further. F\"{u}rer and Yu \cite{FurerYu} extended this result. 
\begin{theorem}\label{thm:HardnessFurerYu}
(\cite[Theorem 7]{FurerYu}) There is an instance for $k$-set packing with locality gap $\frac{k+1}{3}$ such that there is no local improvement of size up to $O(n^{\frac{1}{5}})$.

More formally, for any $t \leq \left( \frac{3 e^3 n}{k} \right)^\frac{1}{5}$ there exist two disjoint collections of $k$-sets $\mathcal{A}$ and $\mathcal{B}$ with $|\mathcal{A}| = 3n$ and $|\mathcal{B}| = (k+1)n$ such that any collection of $t$ sets in $\mathcal{A}$ intersects with at least $t$ sets in $\mathcal{B}$.
\end{theorem}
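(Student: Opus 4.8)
The plan is to reduce the theorem to a purely graph-theoretic statement about expansion in a bounded-degree bipartite graph, and then to produce such a graph by the probabilistic method, extending the construction behind Theorem~\ref{thm:HardnessSviridenko}.

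First I would pin down the intersection structure to aim for. Since $\mathcal{A}$ and $\mathcal{B}$ are each packings, every element of $\mathcal{U}$ lies in at most one set of $\mathcal{A}$ and at most one set of $\mathcal{B}$, so the bipartite intersection graph $H=(A,B,E)$ is simple and each shared element is exactly one edge. Conversely, given any bipartite graph $H$ on parts $A$, $B$ of sizes $3n$ and $(k+1)n$ with maximum degree at most $k$ on both sides, I can realise it as a genuine $k$-set packing instance: introduce one fresh element per edge (placed in its two incident sets) and pad every set with private dummy elements up to size exactly $k$. Distinct sets of $\mathcal{A}$ then share no element, likewise for $\mathcal{B}$, so both are packings, and two sets intersect precisely when they are adjacent in $H$. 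Under this correspondence, ruling out an improving set of size $\le t$ for $\mathcal{A}$ is exactly the local expansion condition that every $S\subseteq B$ with $|S|\le t$ has $|N(S)\cap A|\ge|S|$ (recall a subfamily $\mathcal{B}'$ is improving only when $|N_G(B',A)|<|B'|$); this is the same form as in Theorem~\ref{thm:HardnessSviridenko}, now required for the enlarged family $\mathcal{B}$. Together with $|\mathcal{B}|=(k+1)n>3n=|\mathcal{A}|$ this yields the locality gap $\tfrac{k+1}{3}$.

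Next I would build $H$ at random. The sizes $3n$ and $(k+1)n$ force a tighter degree budget than in Theorem~\ref{thm:HardnessSviridenko}: there $|\mathcal{B}|=kn$ permits $B$-degree $3$ and $A$-degree $k$, whereas pushing the gap to $\tfrac{k+1}{3}$ removes essentially one unit of degree on the $B$-side, since $3d_A=(k+1)d_B$ and $d_A\le k$ force $d_B<3$. I would take a near-regular random bipartite graph respecting this budget and bound, by a first-moment argument, the chance that expansion fails: a size-$t$ set $S\subseteq B$ is bad iff its neighbourhood lands in some $T\subseteq A$ with $|T|=t-1$, and a union bound over $S$ and $T$ produces a quantity of the shape
\[
\binom{(k+1)n}{t}\binom{3n}{t-1}\left(\frac{t-1}{3n}\right)^{d_B t},
\]
where $d_B$ is the $B$-side degree. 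I would then fix the degrees, show this lies below $1$ for every $t$ up to the claimed threshold, and extract a deterministic instance.

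The main obstacle is precisely this estimate. In the Sviridenko–Ward regime the exponential factors leave a surplus (one gets a bound like $(Ct/n)^t$, valid up to $t=\Theta(n)$), but at gap $\tfrac{k+1}{3}$ the tighter budget makes the leading exponential factors cancel, so the bound is controlled by the surviving polynomial-in-$n$ and polynomial-in-$t$ factors. Balancing these — the two binomials against the $d_Bt$-th power of the landing probability — is what drives the threshold down to $t=O\!\big((n/k)^{1/5}\big)$ and manufactures the constants $e^3$ and $1/k$. I expect the genuinely delicate point to be not the union bound per se but arranging the random model so that the edges of a fixed $B$-set land in \emph{distinct} $A$-vertices and the degree constraints hold exactly (a fully independent model over- or under-counts and, as the borderline case $d_B\le 2$ shows, can even diverge); a mildly structured construction in the spirit of Theorem~\ref{thm:HardnessSviridenko} is likely needed to make the clean estimate above rigorous.
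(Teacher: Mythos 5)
First, a point of comparison: the thesis does not prove this theorem at all --- it is quoted verbatim from F\"{u}rer and Yu \cite{FurerYu} and only discussed, so there is no internal proof to measure your attempt against. (Note also that the statement as transcribed has $\mathcal{A}$ and $\mathcal{B}$ in the wrong roles --- for a locality gap one needs every small sub-collection of the \emph{large} family $\mathcal{B}$ to expand into $\mathcal{A}$, exactly as in Theorem~\ref{thm:HardnessSviridenko}; you silently read it in the correct direction, which is fine.) Your reduction to bipartite expansion is sound: one fresh element per intersection plus private dummies realises any bipartite graph with both side-degrees at most $k$ as a pair of packings, and ruling out improving sets of size $\le t$ is precisely the condition $|N(S)\cap A|\ge |S|$ for all $S\subseteq B$, $|S|\le t$.

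The gap is in the construction itself, and it is not merely a technicality you can defer. The edge budget $3n\cdot d_A=(k+1)n\cdot d_B$ with $d_A\le k$ forces average $B$-degree $\tfrac{3k}{k+1}<3$, so any near-regular integer-degree model has $d_B\le 2$, and your own union bound then evaluates to roughly $\bigl(\tfrac{e^2(k+1)}{3}\bigr)^t>1$: it diverges, as you note. But the failure is not an artifact of the estimate --- it is real. A family of $3n$ degree-$2$ sets is a multigraph on the $3n$ vertices of $A$ with as many edges as vertices; placed at random this multigraph has, with high probability, a connected subgraph with strictly more edges than vertices of size $O(\log n)$ (cf.\ Lemma~\ref{lem:BermanMIS2}), i.e.\ an improving set of logarithmic size, destroying the claimed $n^{1/5}$ bound. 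So ``a mildly structured construction is likely needed'' understates what is missing: the construction must split $\mathcal{B}$ into exactly $3n$ degree-$2$ sets arranged \emph{deterministically} as long disjoint cycles on $A$ (so that every proper sub-collection is a union of paths and expands strictly, with surplus equal to the number of path components) and $(k-2)n$ degree-$3$ sets placed randomly, and the first-moment bound must be stratified over mixed subsets, with the degree-$3$ part having to absorb the surplus contributed by the path components. That stratified count is where the exponent $1/5$ and the constant $3e^3/k$ actually come from; a single-exponent bound of the form you wrote cannot produce them.
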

So local search has reached its limits for all practical purposes. To achieve an approximation guarantee beating the order of $\frac{k}{3}$, other approaches to the problem are necessary to consider. Another approach one could take is the LP and SDP relaxations for $k$-set packing. The next chapters treats these two approaches and show improved bounds on their integrality gap. 
\chapter{LP formulation}\label{chap:LP}

This chapter treats the current results on the LP relaxation of $k$-set packing. Section \ref{sec:StandardLP} starts with the results for the standard LP relaxation. Then Section \ref{sec:IntersectingLP} shows how to strengthen this LP and we prove a new bound for the integrality gap of this strengthened LP. Finally Section \ref{sec:SizeLP} presents how to decrease the size of this LP to a polynomial size. 

\section{The standard LP}\label{sec:StandardLP}

\paragraph{LP-formulation} View the set packing problem as the hypergraph matching problem like in Subsection \ref{subsec:Hypergraphs}. Introduce a variable $x_e$ for every hyperedge $e$ that indicates whether $e$ is included in the matching or not. The objective is to maximise $\displaystyle \sum_{e \in E} x_e$ such that every vertex is covered only once. For convenience we introduce the following notation.
\begin{equation*}
x(F) := \sum_{e \in F} x_e.
\end{equation*}
Hence the natural linear program looks like the following, denoted by \eqref{LP}. As before, $\delta(v)$ denotes the set of hyperedges incident to $v$.
%
\begin{equation}\tag{LP}\label{LP}
\begin{alignedat}{2}
\text{max}  \quad & x(E) \ \\
\text{s.t.} \quad & x\left(\delta(v)\right) \leq 1, & \quad & \forall v \in V \\
                  & 0 \leq x_e \leq 1,              & \quad & \forall e \in E
\end{alignedat}
\end{equation}

\paragraph{Results} F\"{u}redi, Kahn and Seymour \cite{Seymour} have shown that the integrality gap of \eqref{LP} equals $k - 1 + \frac{1}{k}$ for $k$-uniform hypergraphs. They also show that in the case of a $k$-partite hypergraph the integrality gap equals $k-1$, but both proofs are non-algorithmic. Chan and Lau \cite{LapChiLau} gave an algorithmic proof of these facts and showed that the bounds are tight. Both results also extend to the weighted case.

\paragraph{Tight example} As a tight example for the non-$k$-partite case, consider the projective plane of order $k-1$. This is a hypergraph that is $k$-uniform (every hyperedge has cardinality $k$), $k$-regular (every vertex has degree $k$), in which every pair of hyperedges intersects in one vertex, and in which every pair of vertices is contained in exactly one hyperedge. Equivalently it is the Steiner system $S(2,k,k^2-k+1)$.
A projective plane of order $k$ exists if $k$ is a prime power and the conjecture that this is also a sufficient condition is a long standing open question. The projective plane of order 2 (thus corresponding to 3-set packing) is the well-known Fano plane. Figure \ref{fig:FanoPlane} depicts a Fano plane, where a hyperedge is represented by a line connecting three vertices.



\begin{figure}
\centering
\begin{tikzpicture}[main node/.style={circle,fill=black!10,draw,font=\sffamily\Large\bfseries}]
\node[main node] (centre) {\phantom{$_1$}};
\foreach \d/\lbl in {90/A, 210/B, 330/C} {
	\node[main node] (\lbl) at (\d:3.0cm) {\phantom{$_1$}};
}
\foreach \A/\B/\C in {A/B/C, B/C/A, A/C/B} {
	\node[main node] (\A\B) at ($(\A)!0.5!(\B)$) {\phantom{$_1$}};	
}

\begin{pgfonlayer}{background}
\foreach \A/\B/\C in {A/B/C, B/C/A, A/C/B} {
\draw[edge] (\A) -- (\B);
	\draw[edge] (\A\B) -- (\C);
}
\draw[edge] (centre) let \p1 = ($(AB)-(centre)$) in circle({veclen(\x1,\y1)});
\end{pgfonlayer}

\end{tikzpicture}
\caption{The Fano plane. A hyperedge is represented by a line connecting three vertices.}
\label{fig:FanoPlane}
\end{figure}
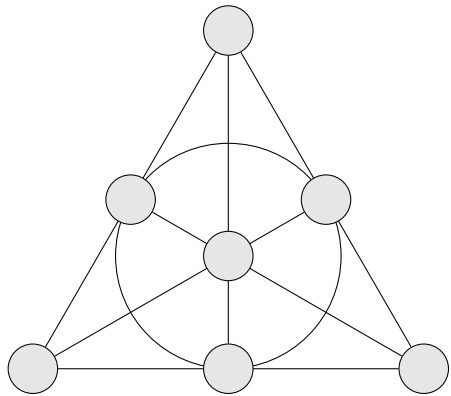


To see that the projective plane of order $k-1$ is a tight example, note that the integral solution to this $k$-set packing instance equals 1 as every hyperedge intersects every other hyperedge. But fractionally, it is possible to set $x_e = \frac{1}{k}$ for every hyperedge because the hypergraph is $k$-regular. This is a feasible solution to \eqref{LP} and since the hypergraph has $k^2-k+1$ hyperedges the integrality gap equals $\frac{1}{k}(k^2-k+1) = k - 1 + \frac{1}{k}$.

\section{Strengthening the LP formulation}\label{sec:IntersectingLP}

\subsection{The intersecting family LP}\label{subsec:IntersectingFamilyLP}

The LP formulation can be strengthened by adding extra local constraints. Call a family of hyperedges intersecting if every two of them overlap in at least one vertex. In the conflict graph of an instance, an intersecting family $\mathcal{F}$ would be a clique $F$. Obviously, from every intersecting family only one hyperedge can be picked. For every intersecting family thus an extra constraint can be added to obtain the following strengthened LP. Let $\mathcal{K}$ denote the collection of all intersecting families.
\begin{equation}\tag{Intersecting family LP}
\begin{alignedat}{2}
\text{max}  \quad & x(E) \ \\
\text{s.t.} \quad & x\left(\delta(v)\right) \leq 1, & \quad & \forall v \in V \\
            \quad & x\left(K\right) \leq 1,         & \quad & \forall K \in \mathcal{K} \\
                  & 0 \leq x_e \leq 1,              & \quad & \forall e \in E
\end{alignedat}
\end{equation}
This is called the interesting family LP. Note that this LP in general has exponentially many constraints and hence it is not solvable in polynomial time. In Section \ref{sec:SizeLP} we will show that for each $k$ the LP can be rewritten into an LP with a number of constraints that is polynomial in $n$. In \cite{LapChiLau}, they proved the following theorem about this intersecting family LP.
\begin{theorem}\label{thm:LPk+1/2}
(\cite[Theorem 4.1]{LapChiLau}) The ratio between any LP solution to the intersecting family LP and any 2-locally optimal solution is at most $\frac{k+1}{2}$. Thus the integrality gap of the intersecting family LP is at most $\frac{k+1}{2}$.
\end{theorem}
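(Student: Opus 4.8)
The plan is to fix an arbitrary $2$-locally optimal packing $\mathcal{A}$ and an arbitrary feasible solution $x$ of the intersecting family LP, and to prove the pointwise bound $x(E) \le \frac{k+1}{2}|A|$ by a charging argument that classifies hyperedges according to how they meet $\mathcal{A}$. First I would partition $E$ into the family $E_1$ of hyperedges meeting exactly one set of $\mathcal{A}$ and the family $E_{\ge 2}$ of those meeting at least two. Here $1$-local optimality (a special case of $2$-local optimality) is used to rule out a third class: if some hyperedge were disjoint from $\mathcal{A}$ it would be an improving set of size $1$, so every hyperedge intersects $\mathcal{A}$ and $E = E_1 \cup E_{\ge 2}$.

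The first ingredient is a degree count from the vertex constraints. For a fixed $S \in \mathcal{A}$, summing $x(\delta(v)) \le 1$ over the $k$ elements $v \in S$ gives $\sum_{e \in E} x_e\,|e \cap S| \le k$; summing this over all $S \in \mathcal{A}$ and using that the sets of $\mathcal{A}$ are pairwise disjoint yields $\sum_{e \in E} x_e\,|e \cap V(\mathcal{A})| \le k|A|$, where $V(\mathcal{A})$ denotes the elements covered by $\mathcal{A}$. Since $|e \cap V(\mathcal{A})| \ge 1$ for $e \in E_1$ and $|e \cap V(\mathcal{A})| \ge 2$ for $e \in E_{\ge 2}$ (a hyperedge meeting two disjoint sets of $\mathcal{A}$ contains at least two covered elements), nonnegativity of $x$ turns this into $x(E_1) + 2\,x(E_{\ge 2}) \le k|A|$.

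The crux, and the place where $2$-local optimality genuinely enters, is the complementary inequality $x(E_1) \le |A|$. For each $S \in \mathcal{A}$ let $E_1(S)$ be the hyperedges meeting $S$ and no other set of $\mathcal{A}$. I claim $E_1(S)$ is an intersecting family: if two of its members were disjoint from each other, then together they would conflict only with $S$ in $\mathcal{A}$, so replacing $S$ by this pair would enlarge the packing, contradicting $2$-local optimality. Hence the intersecting-family constraint gives $x(E_1(S)) \le 1$, and since the sets $E_1(S)$ partition $E_1$, summing over $S \in \mathcal{A}$ gives $x(E_1) \le |A|$. I expect this claim to be the main obstacle, since it is exactly the step that must fuse the two hypotheses, reading a pair of disjoint hyperedges over a single $S$ as the forbidden size-two improving set.

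Finally I would combine the two inequalities. From $x(E_1) + 2\,x(E_{\ge 2}) \le k|A|$ we get $x(E) = x(E_1) + x(E_{\ge 2}) \le \tfrac{1}{2}x(E_1) + \tfrac{k}{2}|A|$, and substituting $x(E_1) \le |A|$ yields $x(E) \le \frac{k+1}{2}|A|$. As this holds for every feasible $x$ and every $2$-locally optimal $\mathcal{A}$, taking $x$ to be an optimal LP solution and noting that $\mathcal{A}$ is itself a feasible integral packing (so $|A|$ is at most the integral optimum) gives the stated bound of $\frac{k+1}{2}$ on the integrality gap.
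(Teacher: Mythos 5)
Your proof is correct, and it is essentially the approach the paper takes: the thesis omits a separate proof of this theorem, but your argument is exactly the $2$-local specialization of its proof of Theorem~\ref{thm:IntegralityGap2}, which uses the same partition of hyperedges by how many sets of the packing they meet, the same observation that $\mathcal{F}_1(e)$ is an intersecting family (forced by $2$-local optimality), and the same degree-count combination. Your key claim that two disjoint hyperedges meeting only $S$ would form a forbidden size-two improvement is precisely the step the paper uses to invoke the intersecting-family constraint.
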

We omit the proof, because we will show an improved bound in Theorem \ref{thm:IntegralityGap2}. In order to proof this, the lemmas from \cite{BermanMIS} treated in the next subsections are necessary.

\subsection{A lemma on multigraphs}\label{subsec:lemma1}

\begin{lemma}\label{lem:BermanMIS1}
(\cite[Lemma 3.1]{BermanMIS}) Assume that every vertex in a multigraph\footnote{A multigraph is a graph where multiple edges and loops are allowed.} $G = (V,E)$ has degree at least 3. Then every vertex $v \in V$ belongs to a connected induced subgraph $G[X]$ with strictly more edges than vertices, of at most $4 \log_2 n - 1$ vertices.
\end{lemma}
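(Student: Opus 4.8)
The plan is to reformulate the goal in terms of cycle space and then grow a breadth-first search (BFS) tree from $v$. Recall that for a \emph{connected} graph $G[X]$ the condition ``strictly more edges than vertices'' is exactly the statement that its cyclomatic number $e(X) - |X| + 1$ is at least $2$, i.e. that $G[X]$ contains two independent cycles, equivalently that it carries at least two edges outside any fixed spanning tree. So it suffices to find a connected set $X \ni v$ of at most $4\log_2 n - 1$ vertices carrying two ``extra'' (non-tree) edges. First I would run BFS from $v$, writing $L_0 = \{v\}, L_1, L_2, \dots$ for the layers, $B_d = L_0 \cup \dots \cup L_d$ for the ball of radius $d$, $b_d = |B_d|$, and $e(B_d)$ for the number of edges inside $B_d$. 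The multigraph corner cases only help: if $v$ carries two loops, or a triple edge, or more generally if some small ball already has cyclomatic number at least $2$, we are immediately done, so I may assume $b_1 \ge 2$ and that every ball we grow has excess at most $1$ until the moment we stop.

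The engine of the argument is a doubling estimate driven by the minimum-degree-$3$ hypothesis. Counting edge-endpoints inside $B_d$ gives $2\,e(B_d) + f_d = \sum_{u \in B_d}\deg(u) \ge 3 b_d$, where $f_d$ is the number of edges leaving $B_d$ (all of which land in $L_{d+1}$ by the BFS property). As long as the excess stays small, $e(B_d) \le b_d$, this forces $f_d \ge b_d$. Passing to $B_{d+1}$, every boundary edge becomes internal, so $e(B_{d+1}) \ge e(B_d) + f_d$, and hence the excess obeys $\mu(B_{d+1}) \ge \mu(B_d) + f_d - |L_{d+1}|$. Therefore, if the excess has not yet reached $2$, we must have $|L_{d+1}| \ge b_d - 1$ and thus $b_{d+1} \ge 2 b_d - 1$. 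Starting from $b_1 \ge 2$ this yields $b_d \ge 2^{d-1} + 1$, and since $b_d \le n$ the excess must reach $2$ at some depth $D$ with $D \le \log_2 n + O(1)$.

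Finally I would extract a \emph{small} witness rather than the whole ball $B_D$ (which may be enormous). Taking two non-tree edges $e_1, e_2$ inside $B_D$ that are independent in the cycle space, I set $X$ to be the union of the four BFS-tree paths from $v$ to the endpoints of $e_1$ and $e_2$. Then $G[X]$ is connected and contains $v$; it contains a spanning tree (the union of those root-paths, with $|X|-1$ edges) together with the two extra edges $e_1, e_2$, so $e(X) \ge |X| + 1 > |X|$, as required. Since every endpoint sits at BFS-depth at most $D$, the four root-paths share the vertex $v$ and contribute at most $4D$ further vertices, giving $|X| \le 4D + 1 = O(\log_2 n)$. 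The main obstacle, and the only genuinely delicate part, is tightening the bookkeeping in the last two steps so that the constant comes out exactly as $4\log_2 n - 1$: one must pin down the doubling bound on $D$ sharply (being careful about the base layer and the $\pm 1$'s in the recurrence) and exploit the overlap of the root-paths near $v$, rather than bounding the size of $B_D$ itself, which would be far too weak.
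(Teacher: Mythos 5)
Your overall strategy --- grow BFS balls $B_d$ around $v$, use the minimum-degree-$3$ hypothesis to show that either the cyclomatic excess of $B_d$ increases or the ball doubles, and then certify the two non-tree edges by the union of their root-paths --- is sound, genuinely different from the paper's argument, and certainly produces a connected induced subgraph with more edges than vertices on $O(\log_2 n)$ vertices. The genuine gap is exactly the point you flag and then defer: the constant. Your own recurrence $b_{d+1}\ge 2b_d-1$ with $b_1\ge 2$ gives $b_d\ge 2^{d-1}+1$, hence only $D\le \log_2(n-1)+2$, and the witness bound $|X|\le 4D+1$ then evaluates to roughly $4\log_2 n+9$, not $4\log_2 n-1$. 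Closing this is not routine bookkeeping. You would have to charge the two non-tree edges to the (possibly very different) depths $d_1\le d_2$ at which the excess first reaches $0$ and then $1$, bound the witness by $2d_1+2d_2+1$ rather than $4D+1$, and run separate doubling estimates in the two phases (in the first phase the excess equals $-1$, which buys you $b_{d+1}\ge 2b_d+2$; in the second it equals $0$ and you only get $b_{d+1}\ge 2b_d$). Even then you must handle the cases where the excess jumps by more than one in the final layer (which weakens the doubling step that bounds $d_2$) and where loops or parallel edges at $v$ shift the base case $b_0,\mu_0$. None of this is impossible, but it is where the proof actually lives, and as written your argument establishes the lemma only with a worse additive constant.

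For comparison, the paper gets the constant cleanly by a two-stage argument that finds the two excess edges \emph{separately}. In the BFS tree rooted at $v$, if every vertex at depth less than $m$ had two children the tree would have $2^m$ vertices, so some vertex $u$ at depth at most $\log_2 n-1$ has at most one child; since $\deg(u)\ge 3$, some edge at $u$ is a cross edge, loop or parallel edge, and the two root-paths to its endpoints plus that edge give a set $Y$ with $e(Y)\ge |Y|$ and $|Y|\le 2\log_2 n$. If equality holds, $Y$ is contracted to a single vertex (degrees stay at least $3$) and the same search is repeated, and the two certificates overlap in the contracted vertex, yielding $2\log_2 n+2\log_2 n-1$ exactly. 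The contraction is what decouples the two searches and makes the constant fall out; your single-pass ball-growth couples them, which is precisely why your bookkeeping becomes delicate.
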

\begin{proof}
This is a slightly more detailed proof than in \cite{BermanMIS}. Let $G = (V,E)$ be a multigraph where every vertex has degree at least 3 and let $v \in V$ be an arbitrary vertex. Consider a breadth-first search tree $T$ of $G$ rooted at $v$. The distance of a vertex in $T$ is the length of the shortest path from the vertex to $v$.

Let $m > 0$ and suppose every vertex in $T$ at a distance less than $m$ has at least two children. Then $T$ has at least $2^m$ vertices. Since $T$ has $n$ vertices, $m \leq \log_2 n$. Since $v$ has degree at least 3, in fact $m < \log_2 n$. This implies that there must be a vertex $u$ at a distance of at most $\log_2 n - 1$ having at most one child.

Since $u$ has at least degree 3 but at most one child, one of the edges incident to $u$ in $T$ must be a cross edge, a loop or a multiple edge. Let's denote this edge by $e = \{u,w\}$, where possibly $w = u$. Then the tree paths from $v$ to $u$ and from $v$ to $w$ together with the edge $e$ form an induced subgraph $G[Y]$ of at least as many edges as vertices, and this induced subgraph has at most $2 \log_2 n$ vertices.

If the number of edges in $G[Y]$ is already strictly larger than the number of vertices in $G[Y]$, then the proof is finished. Otherwise the number of edges equals the number of vertices. In this case, shrink $G[Y]$ to one single vertex $y$. In this modified graph $G'$, every vertex still has degree at least 3. Therefore $G'$ contains another induced subgraph induced by some $Z\cup\{y\}$ with at least as many edges as vertices, of size at most $2 \log_2 n$. But then $G[Y] \cup G[Z]$ contains strictly more edges than vertices in $G$ and its size is at most $4 \log_2 n - 1$, because $y$ is not a vertex in $G$. So $G[Y \cup Z]$ is an induced subgraph of $G$ that satisfies the required properties, as $G[Y \cup Z]$ has at least as many edges as $G[Y] \cup G[Z]$. 
\end{proof}

\subsection{Another lemma on multigraphs}\label{subsec:lemma2}

Using this lemma following lemma can be proved, which is the one really needed for Theorem \ref{thm:IntegralityGap2}.
\begin{lemma}\label{lem:BermanMIS2}
(\cite[Lemma 3.2]{BermanMIS}) For any integer $h \geq 1$, any undirected multigraph $G = (V,E)$ with $|E| \geq \frac{h+1}{h} |V|$ contains a set $X$ of less than $4 h \log_2 n$ vertices, such that in $G[X]$ there are more edges than vertices.
\end{lemma}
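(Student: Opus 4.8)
The plan is to reduce $G$ to a multigraph of minimum degree at least $3$, apply Lemma~\ref{lem:BermanMIS1}, and then lift the resulting dense subgraph back to $G$. First I would process $G$ by repeatedly (i) deleting any vertex of degree $0$ or $1$, and (ii) \emph{suppressing} any vertex of degree exactly $2$, i.e. deleting it and joining its two incident edges into a single edge (a loop if its two neighbours coincide). The quantity to track is the \emph{excess} $|E|-|V|$: deleting a degree-$0$ vertex raises it by one, deleting a degree-$1$ vertex leaves it unchanged, and suppressing a degree-$2$ vertex removes one vertex and one edge and hence also leaves it unchanged. Since the hypothesis gives $|E|-|V|\ge \frac{1}{h}|V|>0$, and the excess is a nonnegative integer throughout, it stays at least $1$; in particular the process cannot empty the graph and terminates at a nonempty multigraph $G'$ in which every vertex has degree at least $3$.

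Next I would apply Lemma~\ref{lem:BermanMIS1} to $G'$ (which has $n'\le n$ vertices), obtaining a connected induced subgraph $G'[X']$ with strictly more edges than vertices and at most $4\log_2 n'-1\le 4\log_2 n-1$ vertices. Now I lift $G'[X']$ to $G$: every vertex of $G'$ is a vertex of $G$, and every edge of $G'$ corresponds to a path in $G$ whose internal vertices are suppressed degree-$2$ vertices. Let $X$ consist of $X'$ together with all internal vertices of the paths realising the edges of $G'[X']$. Because suppression preserves excess edge by edge, expanding each of the $q$ edges of $G'[X']$ into a path of length $\ell_e$ adds $\ell_e-1$ vertices and $\ell_e$ edges, so the excess of the expanded structure equals that of $G'[X']$, which is at least $1$; since the induced subgraph $G[X]$ contains at least these edges, $G[X]$ has strictly more edges than vertices, as required.

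The remaining and genuinely delicate step is to bound $|X|$ by $4h\log_2 n$. Writing $p=|X'|\le 4\log_2 n-1$ and $q$ for the number of edges of $G'[X']$, one has $|X| = p + \sum_{e}(\ell_e-1) < \sum_{e}\ell_e$ (using $q>p$), so it suffices to control the total length $\sum_{e}\ell_e$ of the expanded paths. The number of paths $q$ is itself only $O(\log n)$, because the witness produced by Lemma~\ref{lem:BermanMIS1} is built from one or two fundamental cycles of a breadth-first tree and so has excess at most a small constant, giving $q\le p+2$. The main obstacle is therefore bounding the \emph{lengths} of the contracted paths, since a priori a single suppressed path could be long. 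Here I would invoke the density hypothesis $|E|\ge\frac{h+1}{h}|V|$, which limits how much of $G$ can sit inside long degree-$2$ paths, to argue that the witness can be chosen so that the expanded paths have total length at most $4h\log_2 n$; this is where the factor $h$ enters, matching the $\frac{1}{h}$ excess guaranteed by the hypothesis. I expect this length control to be the crux of the argument, whereas the reduction and the excess bookkeeping above are routine.
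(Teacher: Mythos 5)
There is a genuine gap, and you have correctly located it yourself: the bound on the lengths of the suppressed degree-$2$ chains is the crux, and your proposal does not actually prove it. The route you sketch --- invoking the global density hypothesis $|E| \geq \frac{h+1}{h}|V|$ to limit how long the contracted paths can be --- does not work, because global density says nothing about individual chains: a graph can satisfy the hypothesis by having a small very dense part together with one arbitrarily long subdivided edge elsewhere, and the witness produced by Lemma~\ref{lem:BermanMIS1} may well route through that long chain. Excess bookkeeping alone cannot rescue this.

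The missing idea in the paper's proof is to first pass to a \emph{minimal} vertex subset $U \subseteq V$ whose induced subgraph still satisfies $E_U \geq \frac{h+1}{h} V_U$, and to do all the contraction inside $G[U]$ rather than in $G$. Minimality is what bounds the chains: if a maximal chain $C$ of degree-$2$ vertices in $G[U]$ had $p \geq h$ vertices, then removing it would delete $p$ vertices and $p+1$ edges, and since $p + 1 \leq \frac{h+1}{h}\,p$ the set $U \setminus C$ would still satisfy the density property, contradicting minimality. Hence every maximal chain has fewer than $h$ vertices, and expanding the $m+1 \leq 4\log_2 n$ edges of the witness costs at most $m + (m+1)(h-1) = (m+1)h - 1 < 4h\log_2 n$ vertices. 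Your reduction, excess tracking, and lifting argument are all fine and essentially match the paper's structure; what is missing is precisely this minimality argument, which is where the factor $h$ genuinely enters.
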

\begin{proof}
Also this is a slightly more detailed proof than in \cite{BermanMIS}. Let $h$ be any positive integer. For convenience, for any undirected multigraph $H = (V(H),E(H))$ denote $|V(H)| = V_H$, $|E(H)| = E_H$. $H$ is said to satisfy Property $(*)$ if $E_H \geq \frac{h+1}{h} V_H$. Given a set of vertices $U$ in a multigraph $H$, denote $V_U := V_{H[U]}$, $E_U := E_{H[U]}$. $U$ is said to satisfy Property $(*)$ if its induced subgraph $H[U]$ satisfies Property $(*)$.

Let $G = (V,E)$ be an undirected multigraph that satisfies Property $(*)$, so $|E| \geq \frac{h+1}{h} |V|$. We need to show that $T$ contains a set $X$ of less than $4h \log_2 n$ vertices such that in $G[X]$ there are more edges than vertices.

Let $U \subseteq V$ be the smallest set of vertices in $G$ that satisfies Property $(*)$. Because $U$ is a minimal set, $G[U]$ cannot have a vertex $u$ of degree 1: otherwise $U \setminus \{u\}$ would be a smaller set satisfying Property $(*)$.

Now consider a maximal chain of degree 2 vertices $C$ in $G[U]$ and denote the two vertices adjacent to $C$, its endpoints in $G[U] \setminus C$, by $x$ and $y$. First we show that $C$ has less than $h$ vertices. Assume the contrary and suppose $C$ has exactly $h$ vertices. We claim that $U \setminus C$ is a smaller set of vertices that satisfies Property $(*)$ which is a contradiction. Note that $E_{U \setminus C} = E_U - h - 1$ because $G[C]$ contains $h-1$ edges and is connected to the rest of $U$ by 2 other edges. Also $V_{U \setminus C} = V_U - h$. So we need to show that $E_U - h - 1 \geq \frac{h+1}{h} (V_U - h)$. But this is true: subtract $h+1$ on both sides from the known fact that $E_U \geq \frac{h+1}{h} V_U$ to get to this equation.
So if $C$ contains $h$ vertices, $U$ is not the smallest set of vertices satisfying Property $(*)$, which is a contradiction. Note that the argument still holds if $C$ has $p > h$ vertices: then on the left-hand side $p+1$ is subtracted while on the right-hand side $\frac{h+1}{h}p = p + \frac{p}{h} > p+1$ is subtracted, so the left-hand side is indeed still larger than the right-hand side. We conclude that $C$ has strictly less than $h$ vertices.

Now modify the graph $G[U]$. Replace every maximal chain of degree 2 vertices $C$ by a single edge connecting its endpoints $x$ and $y$. Since any two such chains do not intersect (except possibly at their endpoints), every such chain is replaced independently from another. Denote the resulting graph, obtained from $G[U]$ by contracting all such chains, by $G'$. Then $G'$ does not have any vertices of degree 2 as they are all contracted.


Therefore all vertices in $G'$ have degree at least 3, and Lemma \ref{lem:BermanMIS1} applies. Hence $G'$ contains some induced subgraph $G'[X']$ of size $m \leq 4 \log_2 n - 1$ with strictly more edges than vertices. Now select $m+1$ edges in $G'[X']$ and replace the edges that were chains in $G[U]$ by their respective chains of degree 2 vertices. Denote the resulting graph, obtained from $m+1$ edges from $G'[X']$ by expanding the chains, by $G[X]$ (note that $G[X]$ is indeed an induced subgraph of $G$ so this notation is valid).

Note that in the expanding of the chains, at most $m+1$ chains are expanded and every chain has size less than $h$. So $G[X]$ has in total at most $m + (m+1)(h-1) = (m+1)h - 1$ vertices. As $m \leq 4 \log_2 n - 1$ we see that $V_X = (m+1)h - 1 < 4 h \log_2 n$ vertices and the proof is complete.
\end{proof}
The following lemma is an immediate consequence.
\begin{lemma}\label{lem:BermanMIS3}
(Special case of Lemma \ref{lem:BermanMIS2}) Let $\varepsilon > 0$. If in an undirected multigraph $G = (V,E)$ there is no subset of vertices $X$ of size at most $4 (1 + \frac{1}{\varepsilon}) \log_2 n$ such that in $G[X]$ there are more edges than vertices, then $|E| \leq (1 + \varepsilon) |V|$.
\end{lemma}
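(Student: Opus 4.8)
The statement is flagged as a special case of Lemma~\ref{lem:BermanMIS2}, so the plan is to obtain it directly by instantiating that lemma with a well-chosen integer $h$ and reading the lemma in contrapositive form; no new combinatorial work should be required. First I would restate Lemma~\ref{lem:BermanMIS2} contrapositively: for every integer $h\geq 1$, if a multigraph $G=(V,E)$ contains \emph{no} set $X$ of fewer than $4h\log_2 n$ vertices with more edges than vertices in $G[X]$, then $|E| < \frac{h+1}{h}|V|$.

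The key step is the choice $h := \lceil 1/\varepsilon \rceil$. Since $\varepsilon > 0$ this is a positive integer, so it is a legitimate input to Lemma~\ref{lem:BermanMIS2}. This single choice is engineered to control both quantities that appear in the two lemmas at once. On the one hand $h \geq 1/\varepsilon$ gives $\frac{1}{h} \leq \varepsilon$, hence $\frac{h+1}{h} = 1 + \frac{1}{h} \leq 1 + \varepsilon$, which will convert the conclusion of Lemma~\ref{lem:BermanMIS2} into the desired bound. On the other hand $h = \lceil 1/\varepsilon \rceil \leq 1 + \frac{1}{\varepsilon}$ gives $4h\log_2 n \leq 4\bigl(1 + \tfrac{1}{\varepsilon}\bigr)\log_2 n$, which lines the size threshold up with the hypothesis of the present lemma.

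With $h$ fixed I would then chain the hypotheses. The hypothesis of Lemma~\ref{lem:BermanMIS3} excludes every vertex subset of size at most $4(1+\tfrac{1}{\varepsilon})\log_2 n$ with more edges than vertices; because $4h\log_2 n \leq 4(1+\tfrac{1}{\varepsilon})\log_2 n$, any subset of size strictly less than $4h\log_2 n$ is in particular of size at most $4(1+\tfrac{1}{\varepsilon})\log_2 n$, and is therefore also excluded. This is exactly the precondition of the contrapositive of Lemma~\ref{lem:BermanMIS2}, so it yields $|E| < \frac{h+1}{h}|V| \leq (1+\varepsilon)|V|$, which gives the claim (indeed with strict inequality).

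I do not anticipate a genuine obstacle, since all the difficulty is already carried by Lemma~\ref{lem:BermanMIS2}. The only point demanding care is the bookkeeping around the ceiling: one must verify that the same integer $h=\lceil 1/\varepsilon\rceil$ simultaneously satisfies $\frac{h+1}{h}\leq 1+\varepsilon$ and $4h\log_2 n \leq 4(1+\tfrac{1}{\varepsilon})\log_2 n$, and that $h\geq 1$ holds even when $\varepsilon \geq 1$ (where $h=1$ and the bound $1\leq\varepsilon$ still goes through). Checking the boundary case $\varepsilon \geq 1$ separately is the only place where a sloppy argument could slip.
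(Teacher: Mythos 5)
Your proposal is correct and matches the paper's intent: the paper states this lemma without proof as an ``immediate consequence'' of Lemma \ref{lem:BermanMIS2}, and your instantiation $h=\lceil 1/\varepsilon\rceil$ together with the contrapositive reading is exactly the routine specialization being invoked. The bookkeeping (that this $h$ simultaneously gives $\frac{h+1}{h}\leq 1+\varepsilon$ and $4h\log_2 n\leq 4(1+\frac{1}{\varepsilon})\log_2 n$, including the case $\varepsilon\geq 1$) is handled correctly.
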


\subsection{A new bound on the integrality gap}\label{subsec:TheoremLP}

Using this last lemma, the following improved bound on the integrality gap of the intersecting family LP can be established.
\begin{theorem}\label{thm:IntegralityGap2}
Let $\varepsilon > 0$. The ratio between any LP solution to the intersecting family LP and any $4 ( 1 + \frac{1}{\varepsilon} ) \log_2 n$-locally optimal solution is at most $\frac{k}{3} + 1 + \varepsilon$. Thus the integrality gap of the intersecting family LP is at most $\frac{k}{3} + 1 + \varepsilon$. \textbf{[NOTE: see page iii]}
\end{theorem}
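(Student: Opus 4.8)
The plan is to split the LP value $x(E)$ according to how deeply each hyperedge meets the local optimum $\mathcal{A}$: the ``deep'' intersections are controlled by the degree constraints (these produce the $\frac{k}{3}$ term) and the ``shallow'' ones by the clique constraints together with $t$-local optimality through Lemma \ref{lem:BermanMIS3}. Fix a feasible solution $x$ of the intersecting family LP and a $t$-locally optimal packing $\mathcal{A}$ with $t = 4\left(1+\frac{1}{\varepsilon}\right)\log_2 n$; the goal is $x(E) \le \left(\frac{k}{3}+1+\varepsilon\right)|\mathcal{A}|$. For a hyperedge $e$ let $d(e)$ be the number of sets of $\mathcal{A}$ it meets, and partition the support into $E_1, E_2, E_{\ge 3}$ according to whether $d(e)$ is $1$, $2$, or at least $3$. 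Since $\mathcal{A}$ is in particular $1$-locally optimal, no set of $\mathcal{C}$ is disjoint from $\mathcal{A}$, so these three classes cover the whole support.

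First I would dispose of $E_{\ge 3}$ using only the vertex constraints. Summing $x(\delta(v)) \le 1$ over the $k$ elements $v$ of a set $S \in \mathcal{A}$ and then over all $S \in \mathcal{A}$ gives, since the sets of $\mathcal{A}$ are pairwise disjoint,
\[
\sum_{e \in E} x_e\, d(e) \;\le\; \sum_{e \in E} x_e \left| e \cap \bigcup_{S \in \mathcal{A}} S \right| \;=\; \sum_{S \in \mathcal{A}} \sum_{v \in S} x(\delta(v)) \;\le\; k\,|\mathcal{A}|.
\]
Restricting the left-hand sum to $E_{\ge 3}$ and using $d(e) \ge 3$ there yields $x(E_{\ge 3}) \le \frac{k}{3}\,|\mathcal{A}|$, which accounts for the $\frac{k}{3}$ term exactly.

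It remains to prove $x(E_1)+x(E_2) \le (1+\varepsilon)|\mathcal{A}|$, and this is where the clique constraints and local search enter. I would encode $E_1 \cup E_2$ as a multigraph $H$ on the vertex set $A$: a set of $E_2$ meeting exactly $S_1,S_2$ becomes an edge $\{S_1,S_2\}$, and a set of $E_1$ meeting only $S$ becomes a loop at $S$. The intersecting family constraints are meant to control parallel loops and edges: for a fixed $S$, the members of $E_1$ meeting only $S$ must pairwise intersect --- otherwise two disjoint such sets would replace $S$ and give an improving set of size $2$ --- so they form an intersecting family and carry total $x$-mass at most $1$, and an analogous bookkeeping is intended to let $x(E_1)+x(E_2)$ be compared with the edge count of $H$. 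Then $t$-local optimality should forbid any induced subgraph of $H$ on at most $t = 4\left(1+\frac{1}{\varepsilon}\right)\log_2 n$ vertices with strictly more edges than vertices, because such a dense subgraph ought to correspond to a bounded-size family of sets that can be swapped into $\mathcal{A}$ to enlarge it; note that the choice of $t$ is exactly what Lemma \ref{lem:BermanMIS3} requires. With dense subgraphs ruled out, Lemma \ref{lem:BermanMIS3} gives $|E(H)| \le (1+\varepsilon)|A|$, hence $x(E_1)+x(E_2) \le (1+\varepsilon)|\mathcal{A}|$, and adding the two bounds finishes the proof.

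The hard part --- and, I expect, the real crux --- is the final correspondence: showing that a connected induced subgraph of $H$ with more edges than vertices genuinely furnishes an improving set for $\mathcal{A}$ of size at most $t$. The difficulty is that the hyperedges labelling two adjacent edges of $H$ share a vertex $S \in \mathcal{A}$ but need not be disjoint from each other, so one cannot simply swap all edge-sets in for all vertex-sets and preserve the packing property; one must instead extract a truly disjoint subfamily realizing the surplus edge, while simultaneously reconciling this integral swap with the fractional mass that the clique constraints bound. This is precisely the step that must be handled with care, and it is the line singled out in the note at the front of the thesis as the origin of the gap.
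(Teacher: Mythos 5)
Your proposal follows the paper's own proof almost step for step: the same partition of the support into hyperedges meeting $\mathcal{A}$ once, twice, or at least three times; the same use of the degree constraints for the deep part; the same auxiliary multigraph on the vertex set $A$ fed into Lemma \ref{lem:BermanMIS3}; and the same combination of the intersecting-family constraints with $2$-local optimality for the sets meeting $\mathcal{A}$ exactly once. The difficulty you single out at the end --- that a connected induced subgraph of $H$ with more edges than vertices need not furnish an improving set, because the hyperedges realizing those edges may intersect one another outside $\bigcup_{S\in\mathcal{A}} S$ and so cannot all be swapped in --- is not merely ``the step that must be handled with care'': it is exactly the step that the note at the front of the thesis declares to be false, and nothing in your proposal repairs it. Since you leave that correspondence as an unproved expectation, the proposal is not a proof; as the paper itself concedes, the theorem remains open.

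There is also a second, self-inflicted gap in your bookkeeping. By bounding $x(E_{\ge 3}) \le \frac{k}{3}|\mathcal{A}|$ in isolation you discard the contribution of $E_1 \cup E_2$ to the degree budget, and you are then forced to prove the joint bound $x(E_1)+x(E_2) \le (1+\varepsilon)|\mathcal{A}|$. The paper only establishes $x(\mathcal{F}_1) \le |M|$ and $x(\mathcal{F}_2) \le (1+\varepsilon)|M|$ separately --- a total of $(2+\varepsilon)|M|$ --- and recovers the stated constant by keeping the depth-$1$ and depth-$2$ sets inside the degree inequality, via
$3x(\mathcal{F}) = \bigl(x(\mathcal{F}_1)+2x(\mathcal{F}_2)+3x(\mathcal{F}_{3+})\bigr)+2x(\mathcal{F}_1)+x(\mathcal{F}_2) \le k|M|+2|M|+(1+\varepsilon)|M|$.
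Your loop device cannot deliver the stronger joint bound: after you collapse the parallel loops at a vertex $S$ (which you must, since two $E_1$-sets meeting only $S$ necessarily intersect each other and can never both be swapped in), the graph carries fractional edge weights, so Lemma \ref{lem:BermanMIS3} no longer applies as stated; and a configuration consisting of a loop at $S$, a loop at $S'$ and an $E_2$-edge between them ($3$ edges on $2$ vertices) is not excluded by $t$-local optimality, precisely because the three witnessing hyperedges may pairwise intersect outside $\mathcal{A}$. So even granting the correspondence the paper (wrongly) assumes, your decomposition loses an additive $|\mathcal{A}|$ relative to the paper's and does not reach the claimed constant.
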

\begin{proof}
Let $M$ be a $4 ( 1 + \frac{1}{\varepsilon} ) \log_2 n$-locally optimal matching. Let $x$ be a feasible solution to the intersecting family LP, and let $\mathcal{F}$ be the set of hyperedges with $x_e > 0$. To prove the theorem it suffices to prove that $x(\mathcal{F}) \leq \left( \frac{k}{3} + 1 + \varepsilon \right) |M|$. Denote by $\mathcal{F}_1$, $\mathcal{F}_2$ and $\mathcal{F}_{3+}$ the subsets of $\mathcal{F}$ in which every hyperedge in intersects exactly one, exactly two or at least three hyperedges in $M$, respectively.

Note that $M$ is in particular a 1-local and a 2-locally optimal matching. Since $M$ is a 1-locally optimal matching, each hyperedge $e$ in $\mathcal{F}$ intersects at least one hyperedge in $M$: otherwise $M \cup \{e\}$ would be a larger matching. Hence $\mathcal{F} = \mathcal{F}_1 \cup \mathcal{F}_2 \cup \mathcal{F}_{3+}$. We will now proceed to bound $x(\mathcal{F}_1)$ and $x(\mathcal{F}_2)$ in terms of $|M|$.

Consider a hyperedge $e$ in $M$ and define $\mathcal{F}_1(e) := \left\{ f \in \mathcal{F}_1 \mid f \cap e \neq \emptyset \right\}$. Then $\mathcal{F}_1(e)$ is an intersecting family. For suppose this is false, then there are two disjoint hyperedges $f_1, f_2$ in $\mathcal{F}_1$. These are disjoint from all other hyperedges in $M \setminus \{e\}$ because $f_1, f_2 \in \mathcal{F}_1$. Therefore $M - e + f_1 + f_2$ is a larger matching than $M$, but this is in contradiction with the fact that $M$ is a 2-locally optimal matching. Hence $\mathcal{F}_1(e)$ is an intersecting family. Thus by the intersecting family constraint $x(\mathcal{F}_1(e)) \leq 1$. Now summing over all hyperedges $e \in M$ yields
\begin{equation}\label{F1}
x(\mathcal{F}_1) \leq |M|.
\end{equation}

For the purpose of the analysis of the bound on $x(\mathcal{F}_2)$, consider an auxiliary multigraph $H$ with a vertex for every set in $M$. Two vertices $m_1, m_2 \in M$ are adjacent in $H$ if and only if there is a set in $\mathcal{F}_2$ that intersects both sets corresponding to $m_1$ and $m_2$. Note that $H$ consists of exactly $|M|$ vertices and $|\mathcal{F}_2|$ edges, potentially some of them are parallel. By assumption, $M$ cannot be improved by a set of size at most $4 ( 1 + \frac{1}{\varepsilon} ) \log_2 n$. As an induced subgraph of $H$ with more edges than vertices constitutes an improving set, $H$ cannot contain an induced subgraph of size at most $4 ( 1 + \frac{1}{\varepsilon} ) \log_2 n$ with more edges than vertices. Hence, by Lemma~\ref{lem:BermanMIS3} we infer that $|E(H)| \leq |V(H)| (1 + \varepsilon)$. Consequently, $|\mathcal{F}_2| \leq (1 + \varepsilon) |M|$. As $x(\mathcal{F}_2) \leq |\mathcal{F}_2|$ it follows that
\begin{equation}\label{F2}
x(\mathcal{F}_2) \leq (1 + \varepsilon) |M|.
\end{equation}

Now that $x(\mathcal{F}_1)$ and $x(\mathcal{F}_2)$ are upper bounded in terms of $|M|$, the result follows easily. Note that there are $k|M|$ vertices in $M$. Then the degree constraint yields
\begin{align*}
k|M| & \geq x(\mathcal{F}_1) + 2 x(\mathcal{F}_2) + 3 x(\mathcal{F}_{3+}) \\
     & = 3 x(\mathcal{F}) - x(\mathcal{F}_2) - 2 x(\mathcal{F}_1).
\end{align*}
Now plug in the bound from \eqref{F1} and \eqref{F2} to find
\begin{equation*}
k|M| \geq 3 x(\mathcal{F}) - (1 + \varepsilon) |M| - 2 |M|,
\end{equation*}
which can be rewritten as
\begin{equation*}
x(\mathcal{F}) \leq \frac{k + 3 + \varepsilon}{3} |M| \leq \left( \frac{k}{3} + 1 + \varepsilon \right) |M|. \qedhere
\end{equation*}
\end{proof}



\section{A polynomially sized LP}\label{sec:SizeLP}

A new bound on the integrality gap of the intersecting family LP has now been established, but the LP might have exponentially many constraints and thus not solvable in polynomial time. This section treats the result from \cite{LapChiLau} that for constant $k$ only a polynomial number of constraints can be added to the standard LP formulation such that every intersecting family has a fractional value of at most 1. To this end the definition of a kernel is needed. Intuitively, for an intersecting family $K$ its kernel is a subset $U$ of the vertices covered by $K$ such that all hyperedges restricted to $U$ still form an intersecting family. More formally, for each hyperedge $e$ define $e_U = e \cap U$, and for a collection of hyperedges $K$ define $K_U = \{ e_u \mid e \in K \}$. Then $U$ is a kernel for an intersecting family $K$ if $K_U$ is an intersecting family.

We proceed with the following result from \cite{OriginalKernel}.
\begin{lemma}\label{lem:OriginalKernel}
(\cite{OriginalKernel}) For every $k$ there exists an $f(k)$ such that for every $k$-uniform intersecting family $K$ there is a kernel $S$ of cardinality at most $f(k)$.
\end{lemma}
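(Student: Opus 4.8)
The plan is to recast the kernel condition as a transversal (hitting-set) problem and then bound its size with the Erd\H{o}s--Rado sunflower lemma. By definition $S$ is a kernel exactly when $K_S$ is intersecting, which means that for every pair of distinct hyperedges $e, e' \in K$ the set $e \cap e'$ (nonempty, since $K$ is intersecting) meets $S$. Equivalently, $S$ is a transversal of the family $\mathcal{I} = \{\, e \cap e' : e, e' \in K,\ e \neq e' \,\}$, every member of which has size between $1$ and $k-1$. A family of sets of size at most $k-1$ always admits a transversal of size at most $(k-1)\nu$, where $\nu$ is the maximum number of pairwise disjoint members: take the union of a maximal pairwise disjoint subfamily, which has at most $\nu$ members of size $\le k-1$, and note that maximality forces every member of $\mathcal{I}$ to meet this union. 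So it suffices to bound $\nu$ by a function of $k$, and then $f(k) = (k-1)\nu$ finishes the proof.

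The heart of the argument is thus to bound $\nu$. Suppose $I_1, \dots, I_m \in \mathcal{I}$ are pairwise disjoint, with $I_i = e_i \cap e_i'$. I would first note that any single hyperedge can serve as $e_i$ for at most $k$ indices, since the corresponding $I_i$ are pairwise disjoint nonempty subsets of a $k$-set; hence there are at least $m/k$ indices whose first edges $e_i$ are pairwise distinct. To this collection of distinct $k$-sets I would apply the sunflower lemma: if their number exceeds $k!\,k^k$, they contain a sunflower with $k+1$ petals $e_{i_1}, \dots, e_{i_{k+1}}$ and common core $D$, where $1 \le |D| \le k-1$ (the core is nonempty because $K$ is intersecting, and a proper subset of each petal because the petals are distinct $k$-sets).

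The key step is then to contradict the existence of this sunflower by showing that each partner edge $e_{i_j}'$ is forced onto the core $D$. Indeed $e_{i_j}'$ belongs to the intersecting family $K$, so it meets each of the $k$ remaining petal-edges $e_{i_\ell}$ with $\ell \ne j$; were $e_{i_j}'$ disjoint from $D$, it would have to meet each of these inside the pairwise disjoint petals $e_{i_\ell} \setminus D$, thereby using up all $k$ of its vertices and ending up disjoint from $e_{i_j}$ itself, contradicting $e_{i_j} \cap e_{i_j}' = I_{i_j} \ne \emptyset$. Hence $\emptyset \ne D \cap e_{i_j}' \subseteq e_{i_j} \cap e_{i_j}' = I_{i_j}$, so each $I_{i_j}$ contains a vertex of $D$; as the $I_{i_j}$ are pairwise disjoint, this forces $k+1$ distinct vertices into $D$, contradicting $|D| \le k-1$. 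Therefore no such sunflower exists, the number of distinct first edges is at most $k!\,k^k$, and $m \le k \cdot k!\,k^k$, which bounds $\nu$ and produces an explicit $f(k)$.

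I expect the main obstacle to be precisely this sunflower-core counting step: verifying that the partner edges $e_i'$, which need not be among the sunflower petals, are nonetheless driven onto the core, and doing the bookkeeping of repeated first edges carefully so that the sunflower lemma is applied to a genuine family of distinct sets. Once the bound on $\nu$ is in hand, the transversal construction and the initial reformulation are routine.
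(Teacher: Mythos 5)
The paper does not prove this lemma at all: it is imported as a black box from \cite{OriginalKernel}, so there is no in-text argument to compare yours against. Judged on its own, your proof is correct and self-contained. The reformulation of the kernel condition as a transversal problem for $\mathcal{I} = \{e \cap e' : e \neq e' \in K\}$ matches the paper's definition of a kernel exactly, the greedy bound of $(k-1)\nu$ via a maximal disjoint subfamily is sound, and the sunflower step holds up under scrutiny: with $k+1$ petals, a partner edge $e_{i_j}'$ avoiding the core $D$ would have to place one vertex in each of the $k$ pairwise disjoint sets $e_{i_\ell}\setminus D$, $\ell \neq j$, exhausting its $k$ vertices and forcing $e_{i_j}\cap e_{i_j}' = \emptyset$, which contradicts $I_{i_j}\neq\emptyset$; hence each disjoint $I_{i_j}$ eats a distinct vertex of $D$, contradicting $|D|\le k-1$. (The case $e_{i_j}' = e_{i_\ell}$ for some petal $\ell$ is harmless, since such an edge contains $D$ and so is not disjoint from it.) The only genuinely delicate points — that the sunflower must be extracted from \emph{distinct} first edges, and that the partner edges need not themselves be petals — are exactly the ones you identified and handled. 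The resulting bound $f(k) = (k-1)\cdot k\cdot k!\,k^k$ is explicit, which is more than the paper needs (it only requires that $f(k)$ be independent of $n$), though it is far from the best known bounds on kernel size in the literature the paper points to.
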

The point of this lemma that will be exploited is that the size of this kernel $f(k)$ is independent of $n$ or the number of vertices of the hypergraph. This was a well-studied topic and the interested reader may read \cite{Kernel1,Kernel2,Kernel3,Kernel5,Kernel6,Kernel4}.
Now we will repeat the result from \cite{LapChiLau} that shows how to apply this lemma to prove the following theorem, with our improved bound on the integrality gap.
\begin{theorem}\label{thm:LP2}
Let $\varepsilon > 0$. There is a polynomially sized LP for $k$-set packing with integrality gap at most $\frac{k}{3} + 1 + \varepsilon$. \textbf{[NOTE: see page iii]}
\end{theorem}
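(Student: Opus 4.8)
The plan is to construct a polynomially sized LP whose feasible region coincides exactly with that of the intersecting family LP, so that it automatically inherits the integrality gap bound $\frac{k}{3}+1+\varepsilon$ already established in Theorem~\ref{thm:IntegralityGap2}. The only obstacle to reusing that LP directly is its exponentially many constraints $x(K)\le 1$, one per intersecting family $K$. The tool to overcome this is the kernel bound of Lemma~\ref{lem:OriginalKernel}: every $k$-uniform intersecting family has a kernel of size at most $f(k)$, where $f(k)$ does not depend on $n$. The idea is that the identity of an intersecting family is, up to what matters for the constraint, determined by how its hyperedges meet a bounded set of vertices, and there are only polynomially many such bounded vertex sets.

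First I would define the new LP by starting from the standard relaxation \eqref{LP} and adding, for every vertex set $S\subseteq V$ with $|S|\le f(k)$ and every intersecting family $\mathcal{I}\subseteq 2^{S}$ of subsets of $S$, the constraint
\[
\sum_{e\in E \,:\, e\cap S\in\mathcal{I}} x_e \le 1.
\]
Counting these is routine: the number of choices for $S$ is at most $\binom{N}{f(k)}=O(N^{f(k)})$, and for each fixed $S$ the number of intersecting families $\mathcal{I}\subseteq 2^{S}$ is bounded by the constant $2^{2^{f(k)}}$. Since $N\le kn$ and $k$ is fixed, the total number of added constraints is polynomial in $n$, so the LP has polynomial size.

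Next I would verify that this LP has the same feasible region as the intersecting family LP, which is the heart of the argument and requires both inclusions. In one direction, each added constraint is itself an intersecting family constraint: if $\mathcal{I}$ is intersecting on $S$, then any two hyperedges $e,e'$ with $e\cap S,\,e'\cap S\in\mathcal{I}$ share a vertex of $S$, so $\{e : e\cap S\in\mathcal{I}\}$ is an intersecting family and its constraint already appears in the intersecting family LP. For the converse, given an arbitrary intersecting family $K$, let $S$ be a kernel of size at most $f(k)$ supplied by Lemma~\ref{lem:OriginalKernel} and set $\mathcal{I}=K_S$, which is intersecting by the definition of a kernel. Every $e\in K$ satisfies $e\cap S\in\mathcal{I}$, so using $x_e\ge 0$ from \eqref{LP},
\[
x(K)=\sum_{e\in K} x_e \le \sum_{e\in E\,:\,e\cap S\in\mathcal{I}} x_e \le 1 ,
\]
which is precisely the added constraint for the pair $(S,\mathcal{I})$. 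Hence every intersecting family constraint is implied by the polynomially many added ones, and the two feasible regions coincide.

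Finally, since the two LPs share the same feasible region, they share the same LP optimum and the same integer optimum, hence the same integrality gap; by Theorem~\ref{thm:IntegralityGap2} this gap is at most $\frac{k}{3}+1+\varepsilon$, completing the proof. I expect the main obstacle to be setting up the index set $(S,\mathcal{I})$ for the polynomial constraints so that \emph{both} inclusions hold simultaneously: the kernel lemma supplies the ``only boundedly many relevant vertices'' ingredient needed for the converse, while the nonnegativity of $x$ is exactly what lets a single kernel constraint dominate the full intersecting family constraint $x(K)\le 1$.
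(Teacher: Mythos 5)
Your proof is correct and takes essentially the same route as the paper: both use Lemma~\ref{lem:OriginalKernel} to replace the exponentially many intersecting family constraints by polynomially many constraints indexed by vertex sets $S$ with $|S|\le f(k)$ together with intersecting families of subsets of $S$, with the same counting argument giving roughly $n^{f(k)+1}2^{2^{f(k)}}$ constraints. The only differences are cosmetic --- the paper introduces auxiliary variables $x_U=\sum_{e\supseteq U}x_e$ and states the kernel constraints in terms of them, whereas you write the constraints directly on the $x_e$, and your explicit two-way verification that the feasible regions coincide is in fact more careful than the paper's one-line justification (just make sure to exclude $\emptyset$ from the families $\mathcal{I}$, since $\sum_{e:\,e\cap S=\emptyset}x_e\le 1$ would not be a valid constraint).
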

\begin{proof}
We will prove it is possible to find a polynomially sized LP that still captures all the constraints of the intersecting family LP. This immediately implies the LP has the claimed integrality gap by Theorem \ref{thm:IntegralityGap2}. We follow the proof from \cite{LapChiLau}.

Let $G = (V,E)$ be the hypergraph and create a variable $x_U$ for every subset $U \subseteq V$ that is a subset of some hyperedge $e \in E$. To enforce that $x_U$ represents the fractional value of $U$, add the constraint $x_U = \sum_{e \supseteq U} x_e$. Each new variable $U$ is a subset of some hyperedges in $G$, and $U$ is said to be contained in a subset $S$ if $U \subseteq S$.

Now enumerate all possible subsets $S \subseteq V$ with $|S| \leq f(k)$. For each such subset $S$, enumerate all possible intersecting families $K_S$ formed by the new variables contained in $S$. For each such intersecting family $K_S$ write the following kernel constraint.
\begin{equation*}
\sum_{U \in K_S} x_U \leq 1.
\end{equation*}
There are $\sum_{i=1}^{f(k)} {n \choose i} \leq n^{f(k) + 1}$ possible kernels. For each kernel $S$ with $|S| = t$ there are at most $2^t$ new variables contained in $S$, because there are at most $2^t$ subsets of $S$. Hence there are at most $2^{2^t}$ intersecting families $K_S$ induced in $S$, because there are at most $2^{2^t}$ hypergraphs in $S$. Every such intersecting family corresponds to one constraint, so there are no more than $n^{f(k)+1} 2^{2^{f(k)}}$ kernel constraints. So when $k$ is a constant this is a number polynomial in $n$.

By Lemma \ref{lem:OriginalKernel} each intersecting family has a kernel constraint and hence each intersecting family has at most a fractional value of 1. So indeed, the intersecting family LP can be rewritten into a polynomially sized LP for every constant $k$.
\end{proof} 
\chapter{SDP formulation}\label{chap:SDP}

The first two sections treat background on semidefinite programming and the Lov\'{a}sz Theta function. These concepts are needed for the proof in Section \ref{sec:SDP} of the theorem that there exists a polynomially sized SDP with the improved integrality gap from Theorem \ref{thm:IntegralityGap2}. 

\section{Background on semidefinite programming}\label{sec:Background}

A semidefinite program is a more general form of a linear program. 
General references for a thorough background about SDPs are \cite{SDP11,SDP2,SDP10,SDP5,SDP1}.

In a linear program, the objective is to maximise a linear function over a convex polyhedron. In a semidefinite program the numbers are substituted by vectors and the dot product of two vectors is used instead of the multiplication of two numbers. A semidefinite program can be written in the following form. 
%
%
\begin{equation*}
\begin{alignedat}{2}
\text{max}  \quad & c^T x \ \\
\text{s.t.} \quad & x_1 A_1 + \ldots + x_n A_n - B \succeq 0
\end{alignedat}
\end{equation*}
Here $x \in \mathbb{R}^n$ is the vector of decision variables one needs to assign values to in order to maximise the inner product $c^T x$ with the given vector $c \in \mathbb{R}^n$. $A_1, \ldots, A_n, B \in \mathbf{S}^m$ are given symmetric $m \times m$ matrices, so one can think of $X := x_1 A_1 + \ldots + x_n A_n - B$ as a matrix whose entries are linear functions over the variables $x_i$. The constraint $X \succeq 0$ means $X$ needs to be positive semidefinite, which is equivalent to $y^T X y$ being nonnegative for all $y \in \mathbb{R}^m$ or to $X$ having only nonnegative eigenvalues. When $X \succeq 0$ for some $x \in \mathbb{R}^n$ we say $x$ is a feasible solution. Since both the objective function and the constraints are convex in $x$, a semidefinite program is a convex optimization problem. In contrast to a linear program, its feasible region is in general not a a polyhedron.

\section{Background on the Lov\'{a}sz Theta function}\label{sec:Theta}

This section gives some background on the famous Lov\'{a}sz Theta function introduced in \cite{Shannon}. 
We introduce the Lov\'{a}sz Theta function via orthogonal representations. In order to do that some background about the stable set polytope is first given in Subsection \ref{subsec:STAB}. Subsection \ref{subsec:ONR} will talk about orthogonal representations and then the Lov\'{a}sz Theta function is introduced in Subsection \ref{subsec:Theta}. General references for this section are \cite{SDP11,GLS,Lovasz1,SDP10}.

\subsection{The stable set polytope}\label{subsec:STAB}


\paragraph{Definition} Stable set is another name for an independent set. Given a graph $G = (V,E)$, $\alpha(G)$ denotes the size of the maximum independent set in $G$. For every subset of the vertices $S \subseteq V$ its incidence vector is denoted by $\chi^S \in \mathbb{R}^V$, i.e. for all $i \in V$, $\chi^S_i = 1$ if $i \in S$ and $\chi^S_i = 0$ otherwise. Now define the stable set polytope STAB$(G)$ as follows.
\begin{equation*}
\textrm{STAB}(G) = \textrm{conv.hull}( \chi^S \in \mathbb{R}^V \mid S \textrm{ is an independent set in } G )
\end{equation*}
\paragraph{Properties} So STAB$(G)$ is the smallest convex set in $\mathbb{R}^V$ containing the incidence vectors of all independent sets. Since all extreme points of this polytope are $0,1$-vectors, there is a system of linear inequalities describing this convex set. Theoretically it is possible to find $\alpha(G)$ by optimising the linear objective function $\sum_i x_i$ over STAB$(G)$. However, the number of constraints is generally exponential in $|V|$ so this is not an efficient approach to find $\alpha(G)$, which should be expected as determining $\alpha(G)$ is NP-hard. What one can do, however, is to find extra inequalities for the stable set polytope and find upper bounds for $\alpha(G)$.

\paragraph{The clique constrained stable set polytope} With the intersecting family LP in mind it is natural to start with the following inequalities for $x \in \mathbb{R}^V$.
\begin{equation}\label{eq:STAB1}
x_i \geq 0 \quad i \in V,
\end{equation}
%
%
%
\begin{equation}\label{eq:STAB3}
\sum_{i \in V(Q)} x_i \leq 1 \quad \textrm{for all cliques } Q \textrm{ in } G.
\end{equation}
%
Now define the clique constrained stable set polytope as follows.
\begin{equation*}
\textrm{QSTAB}(G) = \textrm{conv.hull}( x \in \mathbb{R}^V \mid \textrm{Constraints } \eqref{eq:STAB1} \textrm{ and } \eqref{eq:STAB3} \textrm{ hold} ).
\end{equation*}
Now any independent set in $G$ corresponds to an integral vertex in QSTAB$(G)$ and vice versa. The clique constrained stable set polytope is the first approximation of the stable set polytope that is considered here, but to formally define the Lov\'{a}sz Theta function another polytope is introduced in the next subsection.

\subsection{Orthogonal representations}\label{subsec:ONR}

\paragraph{Definition} This subsection introduces the Lov\'{a}sz Theta function via orthogonal representations. Let $G = (V,E)$ be a graph and let $\overline{E} = \{ \{i,j\} \in V \times V \mid \{i,j\} \not\in E \}$ be the complement of $E$. Formally, an orthogonal representation of $G$ is a mapping (labeling) $u: V \rightarrow \mathbb{R}^d$ for some $d$ such that $u_i^T u_j = 0$ for all $\{i,j\} \in \overline{E}$. In other words we need to assign a vector $u_v$ to every vertex $v$ such that the vectors of any two non-adjacent vertices are perpendicular to each other. Such a mapping always exists, in fact, there are two trivial mappings: map all vertices to 0, or map the vectors to a set of mutually orthogonal vectors in $\mathbb{R}^V$.

\paragraph{Orthogonal constrained stable set polytope}
An orthogonal representation $(u_i \mid i \in V)$ with $u_i \in \mathbb{R}^d$ is called orthonormal when every vector has unit length, i.e. when $\|u_i\| = 1$ for all $i \in V$. Let $c$ be some vector in $\mathbb{R}^d$ with $\|c\| = 1$ (for example, take $c = e_1$). Then for any stable set $S \subseteq V$ its vectors $\{u_i \mid i \in S\}$ are mutually orthonormal as the vertices are non-adjacent, and hence
\begin{equation*}
\sum_{i \in S} (c^T u_i)^2 \leq 1.
\end{equation*}
This is true because the left-hand side is the squared length projection of $c$ onto the subspace spanned by the $u_i$. The length of this projection is at most the length of $c$ which is 1. In fact, note that $\sum_{i \in V} (c^T u_i)^2 \chi^S = \sum_{i \in S} (c^T u_i)^2$, which yields that the following inequality holds for the incidence vector $\chi^S$ of any stable set $S \subseteq V$. It is called the orthogonality constraint.
\begin{equation}\label{eq:STAB5}
\sum_{i \in V} (c^T u_i)^2 x_i \leq 1.
\end{equation}
Similar like before, we can now define the orthogonal constrained stable set polytope as follows.
\begin{equation*}
\textrm{TSTAB}(G) = \textrm{conv.hull}( x \in \mathbb{R}^V \mid \textrm{Constraints } \eqref{eq:STAB1} \textrm{ and } \eqref{eq:STAB5} \textrm{ hold} ).
\end{equation*}

\subsection{The Lov\'{a}sz Theta function}\label{subsec:Theta}

The most interesting property of TSTAB$(G)$ is the fact that one can optimise linear functions over it in polynomial time \cite[Theorem 9.3.30]{GLS}. 
We can now succinctly define the Lov\'{a}sz Theta function.
\begin{equation*}
\vartheta(G) = \max \left\{ \sum_i x_i \mid x \in \textrm{TSTAB}(G) \right\}.
\end{equation*}
%
The following is equivalent by writing out the definitions. Let ONR denote an orthonormal representation. Denote the following LP by \eqref{ThetaLP}.
\begin{equation}\tag{$\vartheta$-LP}\label{ThetaLP}
\begin{alignedat}{2}
\vartheta(G) = \text{max}  \quad & \sum_i x_i \ \\
               \text{s.t.} \quad & \sum_{i \in V} (c^T u_i)^2 x_i \leq 1 & \quad \forall c : \|c\| & = 1 \quad \forall \textrm{ONR}\{u_i\} \\
                                 & x_i \geq 0                            & \quad \forall i \in V
\end{alignedat}
\end{equation}

There are a lot of alternative and equivalent definitions for the Lov\'{a}sz Theta function. The semidefinite program for $k$-set packing uses the following equivalent definition, known in the literature as $\theta_3(G)$. An orthogonal representation $\{b_i\}$ is called normalised if $\sum_i \|b_i\|^2 = 1$, and define $\overline{G} = (V,\overline{E})$.
%
%
%
\begin{equation*}
\vartheta_3(G) = \max \left\{ \sum_{u,v} b_u b_v \mid b \textrm{ is a normalised orthogonal representation of } \overline{G} \right\}.
\end{equation*}


\section{An SDP for $k$-set packing}\label{sec:SDP}

This section contains the main theorem of this chapter. 
As before, view $k$-set packing as the independent set problem in a $k+1$-claw free graph and consider the following clique LP.
\begin{equation}\tag{Clique LP}\label{eq:CliqueLP}
\begin{alignedat}{2}
\text{max}  \quad & \sum_i x_i \ \\
\text{s.t.} \quad & x \in \textrm{QSTAB}(G)
\end{alignedat}
\end{equation}
This is optimising the size of an independent set over the clique constrained stable set polytope.
\begin{lemma}\label{lem:SDPnew1}
\eqref{eq:CliqueLP} is equivalent to the intersecting family LP of Chapter \ref{chap:LP}.
\end{lemma}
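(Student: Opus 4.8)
The plan is to show that the two linear programs have exactly the same feasible region, which immediately implies that optimising $\sum_i x_i$ over each yields the same value and hence the LPs are equivalent. Both LPs live in the same variable space (a variable $x_e$ for each hyperedge $e$, equivalently a variable $x_i$ for each vertex $i$ of the conflict graph $G$, since hyperedges of the set packing instance correspond to vertices of $G$). Each also shares the nonnegativity constraints $x_i \geq 0$. So the entire content of the lemma reduces to matching up the remaining constraints: the clique constraints $\sum_{i \in V(Q)} x_i \leq 1$ for all cliques $Q$ of $G$ that define $\textrm{QSTAB}(G)$, versus the vertex-degree constraints $x(\delta(v)) \leq 1$ together with the intersecting-family constraints $x(K) \leq 1$ of the intersecting family LP.

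First I would recall the dictionary established earlier in the excerpt: by the independent-set reduction, a hyperedge of the set packing instance is a vertex of the conflict graph $G$, and two vertices of $G$ are adjacent precisely when the corresponding hyperedges intersect. Under this correspondence a \emph{clique} in $G$ is a set of pairwise-intersecting hyperedges, which is exactly the definition of an \emph{intersecting family} given in Section~\ref{sec:IntersectingLP}. Thus the clique constraints of $\textrm{QSTAB}(G)$ are, term for term, the intersecting-family constraints $x(K) \leq 1$ of the intersecting family LP. This handles one direction of the constraint comparison directly.

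The remaining point is to verify that the degree constraints $x(\delta(v)) \leq 1$ of the intersecting family LP are already subsumed by the clique constraints, and conversely that no clique constraint is lost. For a fixed vertex $v \in V$ of the hypergraph, the set $\delta(v)$ of hyperedges through $v$ is a collection of pairwise-intersecting hyperedges (they all share $v$), hence a clique in $G$; so the degree constraint $x(\delta(v)) \leq 1$ is itself one of the clique constraints of $\textrm{QSTAB}(G)$ and adds nothing new. Conversely, every clique/intersecting family already appears as a constraint in the intersecting family LP. Therefore the two constraint systems cut out the same polytope, and optimising the identical objective over it gives the same program.

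\textbf{The main obstacle} I anticipate is not any deep calculation but rather being careful about the (in)equivalence of the two constraint families and making sure the variable identification is airtight: I must confirm that the degree constraints are genuinely redundant given the clique constraints (so that $\textrm{QSTAB}(G)$ is not a strict superset) and that every intersecting family of hyperedges does correspond to an honest clique of $G$ and vice versa, so that no constraint is silently dropped or added. Once the correspondence ``clique in $G$ $\Longleftrightarrow$ intersecting family of hyperedges'' is stated cleanly and the degree constraints are folded into the clique constraints as the special case of cliques of the form $\delta(v)$, the equality of feasible regions — and hence the equivalence of the LPs — follows with no further work.
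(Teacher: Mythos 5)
Your proof is correct and follows essentially the same route as the paper: identify cliques of the conflict graph with intersecting families, and observe that the degree constraints $x(\delta(v)) \leq 1$ are just clique constraints for the cliques $\delta(v)$. The only detail the paper spells out that you gloss over is that the upper bounds $x_e \leq 1$ of the intersecting family LP are recovered from the singleton-clique constraints, but this is immediate.
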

\begin{proof}
The nonnegativity constraints $x_i \geq 0$ for \eqref{eq:CliqueLP} obviously match the same constraints in the intersecting family LP. The clique constraints $\sum_{i \in Q} x_i \leq 1$ for cliques of size 1 imply the bound $x_i \leq 1$. Evidently they also imply the intersecting family constraints $x(K) \leq 1$ for $Q = K$. The constraints that $x(\delta(v)) \leq 1$ are also implied by the clique constraints: all hyperedges covering element $v$ form a clique in the conflict graph. The other way around is similar.
\end{proof}
Note that replacing QSTAB$(G)$ by TSTAB$(G)$ yields $\vartheta(G)$. These are related as follows.
\begin{lemma}(\cite[Lemma 4.3]{LapChiLau})\label{lem:SDPnew2}
Any feasible solution to \eqref{ThetaLP} is a feasible solution to \eqref{eq:CliqueLP}.
\end{lemma}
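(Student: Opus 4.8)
The plan is to exploit that \eqref{eq:CliqueLP} and \eqref{ThetaLP} share the same objective $\sum_i x_i$ and the same nonnegativity constraints \eqref{eq:STAB1}, so it suffices to show that the family of orthogonality constraints \eqref{eq:STAB5} already forces every clique constraint \eqref{eq:STAB3}; equivalently, that $\textrm{TSTAB}(G) \subseteq \textrm{QSTAB}(G)$. Since $x_i \geq 0$ is assumed in both programs, I only need to produce, for each clique $Q$ of $G$, one specific orthonormal representation $\{u_i\}$ together with one unit vector $c$ whose orthogonality constraint reads exactly $\sum_{i \in V(Q)} x_i \leq 1$.

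The key observation is that the vertices of a clique are pairwise adjacent, so the defining condition $u_i^T u_j = 0$ for $\{i,j\} \in \overline{E}$ imposes no restriction among them, and they may all be assigned a common direction. Concretely, I would fix mutually orthogonal unit vectors $f_0$ and $\{f_j : j \in V \setminus V(Q)\}$ (for instance the standard basis of $\mathbb{R}^{1 + |V \setminus V(Q)|}$), set $c = f_0$, put $u_i = f_0$ for every $i \in V(Q)$, and put $u_j = f_j$ for every $j \notin V(Q)$.

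Next I would verify that this labeling is a legitimate orthonormal representation: every $u_i$ has unit length, and any non-adjacent pair receives orthogonal vectors. The only pairs requiring a check are those not both inside $Q$ (two vertices of $Q$ are adjacent, so nothing is demanded), and for any such pair at least one of the two vectors is some $f_j$ with $j \notin V(Q)$, which is orthogonal to every other member of the chosen orthonormal family. With this representation $(c^T u_i)^2 = (f_0^T f_0)^2 = 1$ for $i \in V(Q)$ and $(c^T u_j)^2 = (f_0^T f_j)^2 = 0$ for $j \notin V(Q)$, so the orthogonality constraint \eqref{eq:STAB5} collapses precisely to $\sum_{i \in V(Q)} x_i \leq 1$.

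Because this works for an arbitrary clique $Q$, any $x$ feasible for \eqref{ThetaLP} satisfies all clique constraints \eqref{eq:STAB3} as well as \eqref{eq:STAB1}, hence lies in $\textrm{QSTAB}(G)$ and is feasible for \eqref{eq:CliqueLP}. There is no genuine analytic difficulty here; the one point that demands care is confirming that collapsing an entire clique onto a single direction still yields a valid orthonormal representation, and this hinges exactly on the fact that cliques are the vertex sets on which the orthogonality requirement of \eqref{eq:STAB5} is vacuous.
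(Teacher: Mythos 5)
Your proposal is correct and follows exactly the same route as the paper: for each clique $Q$ one maps every vertex of $Q$ to the unit vector $c$ and all remaining vertices to mutually orthogonal vectors orthogonal to $c$, whereupon the orthogonality constraint \eqref{eq:STAB5} for that representation becomes the clique constraint \eqref{eq:STAB3}. Your write-up merely makes explicit the verification that this labeling is a valid orthonormal representation, which the paper leaves implicit.
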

\begin{proof}
It suffices to show that the orthogonality constraints \eqref{eq:STAB5} imply the clique constraints \eqref{eq:STAB3}. Let $Q$ be a clique in $G$ and map all vertices of $Q$ to $c$ and all other vertices to mutually orthogonal vectors that are also orthogonal to $c$. Then the orthogonality constraint for $Q$ implies its clique constraint.
\end{proof}
In other words, for every graph $G$
\begin{equation*}
\textrm{STAB}(G) \subseteq \textrm{TSTAB}(G) \subseteq \textrm{QSTAB}(G).
\end{equation*}
Finally all linear and semidefinite programs can be linked.
\begin{lemma}\label{lem:SDPnew3}
$\vartheta_3(G)$ is a stronger relaxation than the intersecting family LP.
\end{lemma}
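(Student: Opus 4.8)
The plan is to chain together the three ingredients already assembled just above the statement: the equality $\vartheta_3(G) = \vartheta(G)$, the characterization $\vartheta(G) = \max\{\sum_i x_i \mid x \in \textrm{TSTAB}(G)\}$, and the inclusion $\textrm{TSTAB}(G) \subseteq \textrm{QSTAB}(G)$ furnished by Lemma~\ref{lem:SDPnew2}. Since $k$-set packing is a maximization problem, every relaxation yields an upper bound on the packing number, and ``stronger'' means ``smaller upper bound''; so it suffices to show that $\vartheta_3(G)$ is at most the optimal value of the intersecting family LP.

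First I would recall that $\vartheta_3(G)$ is merely an alternative formulation of the Lov\'{a}sz Theta function, so $\vartheta_3(G) = \vartheta(G)$, and that by definition $\vartheta(G)$ is the optimal value of \eqref{ThetaLP}, namely the maximum of $\sum_i x_i$ over $\textrm{TSTAB}(G)$. Next I would invoke Lemma~\ref{lem:SDPnew2}, which states that every feasible point of \eqref{ThetaLP} is feasible for \eqref{eq:CliqueLP}; this is precisely the inclusion $\textrm{TSTAB}(G) \subseteq \textrm{QSTAB}(G)$. Because both programs maximize the same linear objective $\sum_i x_i$, maximizing over the smaller feasible region can only produce a smaller optimum, and hence $\vartheta(G)$ is at most the optimal value of \eqref{eq:CliqueLP}.

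Finally I would close the loop using Lemma~\ref{lem:SDPnew1}, which identifies \eqref{eq:CliqueLP} with the intersecting family LP, so that their optimal values coincide. Combining the steps gives $\vartheta_3(G) = \vartheta(G)$, which is at most the optimal value of \eqref{eq:CliqueLP}, and this in turn equals the optimum of the intersecting family LP; hence $\vartheta_3(G)$ is the tighter, and therefore stronger, relaxation.

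I do not expect a genuine obstacle here: the statement is essentially a one-line corollary of the polytope sandwich $\textrm{STAB}(G) \subseteq \textrm{TSTAB}(G) \subseteq \textrm{QSTAB}(G)$ displayed immediately above it. The only points deserving a sentence of care are orienting the inequality correctly---for a maximization relaxation a tighter bound is a \emph{smaller} value, so the inclusion of feasible regions must be read as yielding $\vartheta_3(G) \le \mathrm{OPT}$ rather than the reverse---and flagging that the identity $\vartheta_3(G)=\vartheta(G)$ is the standard equivalence among the several definitions of the Theta function, which we take here as a known fact.
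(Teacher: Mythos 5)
Your argument is correct and follows essentially the same route as the paper's own proof: chain Lemma~\ref{lem:SDPnew2} (feasibility of \eqref{ThetaLP} solutions for \eqref{eq:CliqueLP}), Lemma~\ref{lem:SDPnew1} (equivalence of \eqref{eq:CliqueLP} with the intersecting family LP), and the standard identities $\vartheta_3(G)=\vartheta(G)=\mathrm{opt}$ of \eqref{ThetaLP}. You merely spell out the direction of the inequality more explicitly than the paper does, which is a harmless (indeed helpful) elaboration.
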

\begin{proof}
By Lemma \ref{lem:SDPnew2} \eqref{ThetaLP} is a stronger relaxation than the clique LP. Hence by Lemma \ref{lem:SDPnew1} \eqref{ThetaLP} is also stronger than the intersecting family LP. \eqref{ThetaLP} is equivalent to $\vartheta(G)$, which is equivalent to $\vartheta_3(G)$. Hence $\vartheta_3(G)$ is a stronger relaxation than the intersecting family LP.
\end{proof}
$\vartheta_3(G)$ can be written as follows.
\begin{equation}\tag{$\vartheta_3$-LP}
\label{Theta3LP}
\begin{alignedat}{2}
\vartheta_3(G) = \text{max}  \quad & \sum_{i,j \in V} u_i u_j \ \\
                 \text{s.t.} \quad & u_i u_j = 0,             & \quad \forall (i,j) \in E \\
                                   & \sum_{i=1}^n u_i^2 = 1   \\
                                   & u_i \in \mathbb{R}^d,    & \forall i \in V \quad \verb" "
\end{alignedat}
\end{equation}
This is a semidefinite program which is a stronger relaxation than the intersecting family LP. Then by Lemma \ref{lem:SDPnew3} and Theorem \ref{thm:IntegralityGap2} the following is true.
\begin{theorem}\label{thm:SDP2}
Let $\varepsilon > 0$. \eqref{Theta3LP} is an SDP relaxation for $k$-set packing with integrality gap at most $\frac{k}{3} + 1 + \varepsilon$. \textbf{[NOTE: see page iii]}
\end{theorem}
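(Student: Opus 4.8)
The plan is to deduce the statement directly from the two ingredients assembled immediately before it, Lemma~\ref{lem:SDPnew3} and Theorem~\ref{thm:IntegralityGap2}, by exploiting the fact that strengthening a relaxation can only shrink its integrality gap. The essential observation is that the intersecting family LP, the clique LP, and \eqref{Theta3LP} all share the same integral optimum — the maximum packing, equivalently the maximum independent set value $\alpha(G)$ in the conflict graph — so comparing their relaxation values is precisely comparing their integrality gaps.

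First I would record the chain $\textrm{STAB}(G) \subseteq \textrm{TSTAB}(G) \subseteq \textrm{QSTAB}(G)$ furnished by Lemmas~\ref{lem:SDPnew1} and~\ref{lem:SDPnew2}. The left inclusion certifies that \eqref{Theta3LP} is a genuine relaxation: every incidence vector of an independent set lies in $\textrm{TSTAB}(G)$, whence $\vartheta_3(G) = \vartheta(G) \geq \alpha(G)$. The right inclusion, which is exactly Lemma~\ref{lem:SDPnew3}, yields the upper bound $\vartheta_3(G) \leq \mathrm{OPT}_{\mathrm{LP}}$, where $\mathrm{OPT}_{\mathrm{LP}}$ denotes the optimal value of the intersecting family LP on the same instance.

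Combining the two bounds, for an arbitrary instance I would write
\[
\alpha(G) \;\leq\; \vartheta_3(G) \;\leq\; \mathrm{OPT}_{\mathrm{LP}},
\]
so that the integrality gap of the SDP obeys
\[
\frac{\vartheta_3(G)}{\alpha(G)} \;\leq\; \frac{\mathrm{OPT}_{\mathrm{LP}}}{\alpha(G)} \;\leq\; \frac{k}{3} + 1 + \varepsilon,
\]
the last step being Theorem~\ref{thm:IntegralityGap2}. As the instance was arbitrary, this bounds the integrality gap of \eqref{Theta3LP} as claimed.

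I do not expect a substantive obstacle: all the analytic content sits in Theorem~\ref{thm:IntegralityGap2} and in the polytope equivalences of Section~\ref{sec:SDP}. The one point deserving care is the monotonicity step itself — one must be sure the integral optimum is genuinely common to the LP and the SDP (it is, being $\alpha(G)$ in both), so that a smaller relaxation value translates into a smaller \emph{ratio} rather than merely a smaller numerator measured against a different denominator. Consequently the SDP bound is only as sound as Theorem~\ref{thm:IntegralityGap2}, and it inherits whatever status that theorem ultimately has (cf.\ the note on page~iii).
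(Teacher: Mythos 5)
Your proposal is correct and follows essentially the same route as the paper: Theorem~\ref{thm:SDP2} is obtained there as an immediate corollary of Lemma~\ref{lem:SDPnew3} (the SDP is a stronger relaxation than the intersecting family LP) combined with Theorem~\ref{thm:IntegralityGap2}, which is exactly your monotonicity argument, merely stated less explicitly. Your added care in checking that both relaxations share the integral optimum $\alpha(G)$, and your closing remark that the result inherits the status of Theorem~\ref{thm:IntegralityGap2} (whose proof is flawed, per the note on page iii), are both apt.
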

This is Theorem 1.5 from \cite{LapChiLau} with our improved bound on the integrality gap. Since this semidefinite program has a polynomial size, the following theorem is also true, similar to Theorem \ref{thm:LP2}.
\begin{theorem}\label{thm:SDP3}
Let $\varepsilon > 0$. There is a polynomially sized SDP for $k$-set packing with integrality gap at most $\frac{k}{3} + 1 + \varepsilon$. \textbf{[NOTE: see page iii]}
\end{theorem}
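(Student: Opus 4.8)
The plan is to observe that, in contrast to the linear-programming case treated in Theorem \ref{thm:LP2}, no reduction is needed at all: the semidefinite program \eqref{Theta3LP} appearing in Theorem \ref{thm:SDP2} is already of polynomial size, so the statement follows almost immediately.

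First I would count the variables and constraints of \eqref{Theta3LP}. The conflict graph $G$ has exactly $n$ vertices, one for each hyperedge of $\mathcal{C}$, and at most $\binom{n}{2}$ edges. The vector program has one vector $u_i$ per vertex, one orthogonality constraint $u_i u_j = 0$ per edge, and the single normalisation constraint $\sum_i u_i^2 = 1$. To put this in standard SDP form I would pass to the Gram matrix $Y \in \mathbb{R}^{n \times n}$ with $Y_{ij} = u_i^T u_j$: the program becomes $\max \sum_{i,j} Y_{ij}$ subject to $Y_{ij} = 0$ for every $(i,j) \in E$, $\sum_i Y_{ii} = 1$, and $Y \succeq 0$. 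This has at most $\binom{n}{2}$ matrix variables and at most $\binom{n}{2} + 1$ linear constraints, which is polynomial in $n$ for every $k$ --- notably without any dependence of the exponent on $k$, unlike the kernel bound $f(k)$ in Theorem \ref{thm:LP2}.

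Second, I would check that the two forms are faithfully equivalent. Any feasible $Y \succeq 0$ factors as a Gram matrix via a spectral (or Cholesky) decomposition, yielding vectors $u_i \in \mathbb{R}^d$ with $d \le n$ that satisfy the orthogonality and normalisation constraints; conversely any feasible representation produces a feasible $Y$, and the objective together with all constraints agree entrywise. Hence the matrix program computes exactly $\vartheta_3(G)$. Since a semidefinite program of polynomial size can be optimised to any fixed additive accuracy in polynomial time by the ellipsoid or an interior-point method, \eqref{Theta3LP} is a polynomially sized, efficiently solvable SDP relaxation of $k$-set packing.

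Finally I would invoke Theorem \ref{thm:SDP2}, which already bounds the integrality gap of \eqref{Theta3LP} by $\frac{k}{3} + 1 + \varepsilon$; as the polynomially sized matrix program is literally the same program, the identical bound holds. The genuine content here is conceptual rather than technical: the exponentially many clique/orthogonality constraints that force the intersecting family LP to be large are absorbed implicitly into the single cone constraint $Y \succeq 0$, so no analogue of the kernel argument is required and the $\vartheta_3$ formulation is compact from the outset. The only step demanding care is verifying that this absorption is exact --- that \eqref{Theta3LP} indeed equals $\vartheta_3(G)$ and hence, through Lemma \ref{lem:SDPnew3}, dominates the intersecting family LP --- which is precisely what the chain of Lemmas \ref{lem:SDPnew1}--\ref{lem:SDPnew3} has already delivered.
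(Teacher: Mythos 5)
Your proposal is correct and follows essentially the same route as the paper, which likewise derives the result directly from Theorem \ref{thm:SDP2} by observing that \eqref{Theta3LP} is already of polynomial size; you merely make explicit the variable/constraint count and the Gram-matrix equivalence that the paper leaves as a one-line remark. Note that, exactly as with the paper's own argument, the integrality-gap bound ultimately rests on Theorem \ref{thm:IntegralityGap2}, whose proof the erratum on page iii declares flawed, so your derivation inherits that same caveat.
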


\chapter{Weighted approximation}\label{chap:Weighted}

In this chapter we give a simplified proof of the main lemma of Berman's paper \cite{Berman} containing the currently best approximation algorithm for weighted $k$-set packing with approximation guarantee $\frac{k+1}{2}$. First the necessary terminology is introduced in Section \ref{sec:Terminology}. Then the algorithm is discussed in Section \ref{sec:Algorithm} and Section \ref{sec:ProofBerman} provides a new proof of the main lemma.

\section{Terminology}\label{sec:Terminology}

\paragraph{Claws} Consider the following setting from \cite{Berman} for this chapter. For a graph, define a $k$-claw $C$ as a subgraph isomorphic to $K_{1,k}$, the complete bipartite graph on 1 and $k$ vertices. For convenience, define a 1-claw to be a singleton set $C$ with $T_C = C$. A claw is a $k$-claw for some $k$. Define the single vertex connected to all other vertices of the claw to be the center $Z_C$. The other vertices of the claw (forming an independent set by definition) are called the talons $T_C$ of the claw. A $k$-claw has $k$ talons and one center vertex. Write $C = Z_C \cup T_C$ for a claw $C$ with center vertex $Z_C$ and talons $T_C$.

\paragraph{Approximation guarantee} Let $G = (V,E)$ be a $k$-claw free graph with a weight $w(v)$ for every vertex $v \in V$. The main theorem of \cite{Berman} is a $\frac{k}{2}$-approximation algorithm for maximum independent set in such a graph. This yields a $\frac{k+1}{2}$-approximation algorithm for weighted $k$-set packing because any packing corresponds to an independent set in a $k+1$-claw free graph. The algorithm searches for claws satisfying certain properties and then adds the talons of the claw to the current independent set $A$ and removes the neighbours of the talons in $A$.

\paragraph{Notation} We will use the following notational conventions. Define for a vertex $v \in V$ its open neighbourhood (or just its neighbourhood) $N(v) = \{ w \in V \mid \{v,w\} \in E \}$ and its closed neighbourhood $N[v] = N(v) \cup \{v\}$. For a subset of the vertices $W \subseteq V$ define its closed neighbourhood $N[W] = \bigcup_{w \in W} N[w]$ and write $N(W) = N[W] \setminus W$ for the (open) neighbourhood of $W$.
\begin{defn}
For two given subsets of the vertices $U$ and $A$ write $N(U,A) = N(U) \cap A$, i.e. the neighbourhood of $U$ in $A$.
\end{defn}
$N(U,A)$ is called the $A$-neighbourhood of $U$ and we refer to the vertices in $N(U,A)$ as the $A$-neighbours of $U$. Write $N(u,A)$ for $N(\{u\},A)$ for some vertex $u$ and some subset of the vertices $A$.
\begin{defn}
By $n(u,A)$, denote a vertex $v \in N(u,A)$ with maximum weight.
\end{defn}
If this is not uniquely defined, simply choose a random vertex of the different possibilities. 

For convenience we introduce the notation $w(U) = \sum_{v \in U} w(v)$ for some subset of the vertices $U \subseteq V$. Even shorter, we write the following.
\begin{defn}
$w(U,A) = w(N(U,A))$ and $w(u,A) = w(N(u,A))$.
\end{defn}
These are the sums of the weights of the $A$-neighbours of a subset of the vertices $U$ respectively one vertex $u$. These notations extend to different weight functions such as $w^2$, in particular note that $w^2(U,A) = \sum_{v \in N(U,A)} w^2(v) \neq \left( w(U,A) \right)^2$. Now define the following function as in \cite{Berman}.
\begin{equation*}
charge(u,v) = \left\{
                \begin{array}{ll}
                  w(u) - \frac{1}{2}w(u,A), & \hbox{if $v = n(u,A)$;} \\
                  0, & \hbox{otherwise.}
                \end{array}
              \right.
\end{equation*}

Now define the following.
\begin{defn}
Let $A$ be an independent set in a graph $G = (V,E)$. Define a good claw $C = Z_C \cup T_C$ to be a claw satisfying either of the following properties.
\begin{enumerate}
  \item $N(T_C,A) = \emptyset$, i.e. adding $T_C$ to $A$ to obtain another independent set does not require the removal of any sets in $A$; or
  \item The center vertex $Z_c$ is in $A$ and $\sum_{u \in T_C} charge(u,v) > \frac{1}{2}w(v)$.
\end{enumerate}
A claw $C$ is called a nice claw if it is a minimal set forming a good claw, i.e. if there is no strict subset of $C$ forming a smaller good claw.
\end{defn}

\section{Two algorithms joining forces}\label{sec:Algorithm}

\subsection{The algorithms \textsc{SquareImp} and \textsc{WishfulThinking}}\label{subsec:Algorithms}

In this setting, define the following algorithm:
\begin{quote}
\textsc{SquareImp} \\
$A \leftarrow \emptyset$ \\
While there exists a claw $C$ such that $T_C$ improves $w^2(A)$ \\
\verb"  " $A \leftarrow A \cup T_C \setminus N(T_C,A)$
\end{quote}
Now define the following algorithm:
\begin{quote}
\textsc{WishfulThinking} \\
$A \leftarrow \emptyset$ \\
While there exists a nice claw $C$ \\
\verb"  " $A \leftarrow A \cup T_C \setminus N(T_C,A)$
\end{quote}
These two algorithms can now be linked in the following way.
\begin{enumerate}
  \item Every nice claw improves $w^2(A)$, so a run of \textsc{WishfulThinking} forms the initial part of a run of \textsc{SquareImp}. See Section \ref{sec:ProofBerman}.
  \item Consequently, \textsc{WishfulThinking} cannot make more iterations than \textsc{SquareImp}.
  \item When \textsc{SquareImp} terminates it yields an independent set $A$ for which no claw improves $w^2(A)$. Hence there is no more nice claw, so \textsc{WishfulThinking} terminates.
  \item If \textsc{WishfulThinking} terminates, its approximation guarantee is $\frac{k}{2}$. See Subsection \ref{subsec:Approx}.
\end{enumerate}
The proof of the approximation guarantee is repeated in Subsection \ref{subsec:Approx} to get some insight into the non-intuitive definitions of the charge function and good claws. The main lemma is the fact that every nice claw improves $w^2(A)$, for which we give a simplified proof in Section \ref{sec:ProofBerman}.


\subsection{The approximation guarantee}\label{subsec:Approx}

We repeat the following proof of \cite{Berman}.
\begin{lemma}(\cite[Lemma 1]{Berman})\label{lem:Berman1}
Assume that \textsc{WishfulThinking} has terminated and that $A^*$ is an independent set. Then $\displaystyle \frac{w(A^*)}{w(A)} \leq \frac{k}{2}$.
\end{lemma}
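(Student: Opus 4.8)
The plan is to run a charging argument: each vertex of $A^*$ distributes its weight onto $A$, and termination of \textsc{WishfulThinking} will cap the total charge landing on each $v \in A$ at $\frac{1}{2}w(v)$. First I would observe that, since a nice claw is by definition a minimal good claw, the absence of any nice claw at termination means no \emph{good} claw exists at all. I would then extract the two structural consequences of this. From the failure of condition 1 applied to the singleton ($1$-)claws I get that $A$ is a maximal independent set, so every $u \in A^* \setminus A$ has $N(u,A) \neq \emptyset$ and therefore a well-defined heaviest $A$-neighbour $n(u,A)$.

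For the second consequence, fix $v \in A$ and collect $T_v := \{u \in A^* \setminus A \mid n(u,A) = v\}$. Because $A^*$ is independent and every $u \in T_v$ is adjacent to $v$, the set $\{v\} \cup T_v$ is a claw with center $Z_C = v \in A$; as $G$ is $k$-claw free, $|T_v| \le k-1$. Since this claw is not good (condition 2 must fail), I get $\sum_{u \in T_v} charge(u,v) \le \frac{1}{2}w(v)$. Substituting $charge(u,v) = w(u) - \frac{1}{2}w(u,A)$ for $u \in T_v$ and summing over all $v \in A$ (the sets $T_v$ partition $A^* \setminus A$, since each $u$ lies in exactly one, via its chosen $n(u,A)$) gives
\[
\sum_{u \in A^* \setminus A}\left( w(u) - \tfrac{1}{2}w(u,A)\right) \le \tfrac{1}{2}w(A).
\]

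Next I would bound the cross term by double counting:
\[
\sum_{u \in A^* \setminus A} w(u,A) = \sum_{v \in A} w(v)\,\bigl|N(v) \cap (A^* \setminus A)\bigr|.
\]
For $v \in A \cap A^*$ the index set is empty (as $A^*$ is independent), while for $v \in A \setminus A^*$ the $k$-claw freeness bounds $|N(v) \cap A^*| \le k-1$; hence the sum is at most $(k-1)\,w(A \setminus A^*)$. Plugging this into the previous inequality and adding $w(A^* \cap A) = w(A \cap A^*)$ to both sides yields, writing $x = w(A \cap A^*)$ and $y = w(A \setminus A^*)$ so that $w(A) = x+y$,
\[
w(A^*) \le \tfrac{3}{2}x + \tfrac{k}{2}y.
\]

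Finally, since $k \ge 3$ we have $\frac{3}{2} \le \frac{k}{2}$, so the right-hand side is at most $\frac{k}{2}(x+y) = \frac{k}{2}w(A)$, which is the claim. The step I expect to be the main obstacle is the bookkeeping for $A \cap A^*$: the charging argument only directly controls $A^* \setminus A$, so one must reinsert the shared weight and verify that $k \ge 3$ absorbs the residual $\frac{3}{2}x$ term into $\frac{k}{2}x$. A secondary point requiring care is justifying that $\{v\} \cup T_v$ is an admissible claw with its center in $A$, so that condition 2 of the definition of a good claw genuinely applies and delivers the per-vertex charge bound.
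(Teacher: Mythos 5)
Your argument is correct and is essentially the paper's own two-phase charging scheme, just executed as a global summation: each $u \in A^* \setminus A$ contributes $\tfrac{1}{2}w(v)$ to every $A$-neighbour $v$ plus the residual $charge(u,n(u,A))$ to its heaviest $A$-neighbour, and the absence of good claws (via the singleton claws and the claws $\{v\} \cup T_v$ centered in $A$) caps the total landing on each $v \in A$ exactly as in the paper. The only substantive difference is that you account for $A \cap A^*$ explicitly, which produces the $\tfrac{3}{2}\,w(A \cap A^*)$ term absorbed using $k \geq 3$; this is a point the paper's proof glosses over, and your bookkeeping is the more careful of the two.
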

\begin{proof}
Let $G = (V,E)$ be the graph and let $A$ be the independent set that has been found using \textsc{WishfulThinking}. Let $A^*$ be any independent set in $G$ (in particular it could be the maximum independent set). We will distribute $w(A^*)$ among the vertices of $A$ such that no vertex $v \in A$ receives more than $\frac{k}{2}w(v)$. This immediately implies the claimed result. The distribution consists of two phases.

In the first phase, every vertex $u \in A^*$ sends to each of its $A$-neighbours $v \in N(u,A)$ a portion of its weight equal to $\frac{1}{2}w(v)$. Note that $N(u,A) \neq \emptyset$ because otherwise $\{u\}$ would be a nice 1-claw and these do not exist when \textsc{WishfulThinking} has terminated.

In this first phase, every vertex $u$ sends a portion of its weight equal to $\frac{1}{2}w(u,A)$. By the definition of the $charge$ function, the portion of its weight that is not distributed yet equals $charge(u,n(u,A))$. In the second phase $u$ sends $charge(u,n(u,A))$ to $n(u,A)$.

Now consider some vertex $v \in A$ in the receiving side of this distribution. In the first phase $v$ gets $\frac{1}{2}w(v)$ from all its neighbours in $A^*$. Because $A^*$ is an independent set and the graph is $k$-claw free, $v$ has at most $k-1$ neighbours in $A^*$. Thus $v$ gets at most $(k-1)\frac{1}{2}w(v) = \frac{k}{2}w(v) - \frac{1}{2}w(v)$ in the first phase.

In the second phase, $v$ receives exactly $\sum_{u \in N(v,A^*)} charge(u,v)$. By the definition of a good claw, this can be at most $\frac{1}{2}w(v)$: otherwise the vertices in $A^*$ sending positive $charge$s to $v$ form the talons of a good claw with $v \in A$ at its center.

Hence every vertex $v \in A$ receives at most $\frac{k}{2}w(v) - \frac{1}{2}w(v) + \frac{1}{2}w(v) = \frac{k}{2}w(v)$, and hence the weight of $A^*$ is at most $\frac{k}{2}$ times as much as the weight of $A$.
\end{proof}

As noted, this yields a $\frac{k}{2}$-approximation for the weighted independent set problem in $k$-claw free graphs, which results in a $\frac{k+1}{2}$-approximation for weighted $k$-set packing.

\paragraph{Instructive example} Berman \cite{Berman} proceeds with an example of an instance where an iteration of \textsc{WishfulThinking} in fact decreases $w(A)$, which is the function it is in fact trying to maximise. This is contra-intuitive: we are using a local search technique and from a given solution we might move to a next solution with a worse objective value. However, as we will prove in Section \ref{sec:ProofBerman}, an iteration of \textsc{WishfulThinking} always increases $w^2(A)$ and that suffices for the analysis.

Here is the example, see Figure \ref{fig:Example}. Let $S$ be a subset of the current independent set $A$ depicted at the bottom of Figure \ref{fig:Example} and let $T$ be a subset of the vertices in $V \setminus A$ depicted at the top. Write $S = \{s_1, \ldots, s_5\}$ with $w(s_i) = 10$ for $i=1,\ldots,5$, and $T = \{t_1, t_2\}$ with $w(t_1) = w(t_2) = 18$. Let $n(t_i,A) = s_3$ for $i=1,2$.


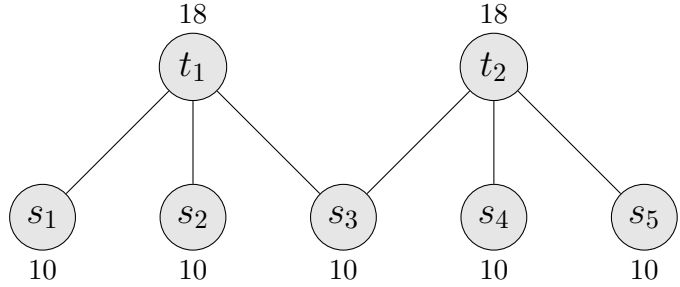
\begin{figure}
\centering
\begin{tikzpicture}[node distance = 2cm, main node/.style={circle,fill=black!10,draw,font=\sffamily\Large\bfseries}]
\node[main node, label=below:10] (A) {$s_1$};
\node[main node, label=below:10, right of=A] (B) {$s_2$};
\node[main node, label=below:10, right of=B] (C) {$s_3$};
\node[main node, label=below:10, right of=C] (D) {$s_4$};
\node[main node, label=below:10, right of=D] (E) {$s_5$};
\node[main node, label=above:18, above of=B] (F) {$t_1$};
\node[main node, label=above:18, above of=D] (G) {$t_2$};
\draw (A) -- (F);
\draw (B) -- (F);
\draw (C) -- (F);
\draw (C) -- (G);
\draw (D) -- (G);
\draw (E) -- (G);
\end{tikzpicture}
\caption{An example where an iteration of \textsc{WishfulThinking} actually decreases $w(A)$. The bottom vertices are in $S \subseteq A$ and the top vertices are in $T \subseteq V \setminus A$.}
\label{fig:Example}
\end{figure}

We claim that $\{s_3\} \cup \{t_1, t_2\}$ is a nice claw. To see this, note that $charge(t_i,s_3) = w(t_i) - \frac{1}{2}w(t_i,A) = 18 - \frac{1}{2}(10 + 10 + 10) = 3$. So for $s_3$ we have $\sum_{t_i} charge(t_i,s_3) = 3 + 3 = 6$, which is larger than $\frac{1}{2}w(s_3) = 5$. So all conditions are satisfied and $\{s_3\} \cup \{t_1, t_2\}$ is a good claw, and because it is minimal it is also a nice claw.

However, adding $T$ and removing $N(T,S) = S$ means adding two sets of weight 18 and removing 5 sets of weight 10. Hence $w(A)$ decreases by 14. But the squared weight function increases: the gain is twice $18^2$ and the loss is five times $10^2$, so it increases by $648 - 500 = 148$. In fact, elementary calculus shows that $w^c$ increases in this example for $c > \frac{\log 5 - \log 2}{\log 18 - \log 10} \approx 1.56$, or in a more general setting, for $c > \frac{\log |S| - \log |T|}{\log w(s_i) - \log w(t_i)}$. See also \cite{BermanWeighted2} for results on using the misdirected weight function $w^c$ for some $c \neq 1$.


\subsection{A new observation}\label{subsec:Observation}

For the independent set problem in $k$-claw free graphs the use of $w^2$ rather than $w$ can be advantageous due to the following observation. Let $A$ be some independent set and let $u$ be some vertex not in $A$.
\begin{equation*}
\sum_{t \in N(u,A)} w^2(t) \leq \sum_{t \in N(u,A)} \left( w(t) \left( \max_{t \in N(u,A)} w(t) \right) \right) = \left( \max_{t \in N(u,A)} w(t) \right) \sum_{t \in N(u,A)} w(t).
\end{equation*}
This proofs the following observation.
\begin{obs}\label{obs}
$w^2(u,A) \leq n(u,A) w(u,A)$.
\end{obs}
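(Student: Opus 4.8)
The statement is essentially a termwise estimate, and the displayed calculation immediately preceding it already contains the heart of the argument. The plan is to first clarify the notation: in the inequality $w^2(u,A) \leq n(u,A)\, w(u,A)$, the symbol $n(u,A)$ stands (by the slight abuse introduced earlier) for the \emph{weight} $w(n(u,A))$ of the maximum-weight $A$-neighbour of $u$, so that $n(u,A) = \max_{t \in N(u,A)} w(t)$. With this reading, the two sides are $w^2(u,A) = \sum_{t \in N(u,A)} w^2(t)$ and $\max_{t \in N(u,A)} w(t) \cdot \sum_{t \in N(u,A)} w(t)$, which is exactly the chain of (in)equalities displayed above the observation.

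First I would fix an arbitrary neighbour $t \in N(u,A)$ and observe that, since $n(u,A)$ is by definition a maximum-weight vertex of $N(u,A)$, we have $w(t) \leq w(n(u,A))$. Because all weights are nonnegative, I may multiply this inequality through by $w(t) \geq 0$ without reversing it, obtaining the pointwise bound
\begin{equation*}
w^2(t) \;=\; w(t)\,w(t) \;\leq\; w(t)\, w(n(u,A)).
\end{equation*}
Next I would sum this bound over all $t \in N(u,A)$. On the left the sum is $\sum_{t \in N(u,A)} w^2(t) = w^2(u,A)$ by the definition of $w^2(u,A)$. On the right, $w(n(u,A))$ is a constant independent of the summation index and may be factored out, leaving $w(n(u,A)) \sum_{t \in N(u,A)} w(t) = w(n(u,A))\, w(u,A)$, again by the definition of $w(u,A)$. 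Combining the two sides yields $w^2(u,A) \leq w(n(u,A))\, w(u,A)$, which is the claimed observation.

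There is no genuine obstacle here: the whole content is the pointwise inequality $w(t) \leq \max_{s} w(s)$ combined with nonnegativity of the weights. The only point deserving a word of care is purely notational, namely making explicit that $n(u,A)$ in the statement abbreviates the weight $w(n(u,A))$ rather than the vertex itself; once that identification is made, the bound is a single application of ``each term is at most its maximum times itself'' followed by factoring the constant out of the sum.
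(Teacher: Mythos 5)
Your proof is correct and is essentially identical to the paper's own argument, which consists precisely of the termwise bound $w^2(t) \leq w(t)\max_{s \in N(u,A)} w(s)$ summed over $N(u,A)$ with the maximum factored out of the sum. Your explicit remark that $n(u,A)$ in the statement abbreviates the weight $w(n(u,A))$ of the maximum-weight $A$-neighbour is a helpful clarification of the paper's notational abuse, but the mathematical content is the same.
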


So the squared weight function of a set of vertices is capable of capturing information not only about the sum of the weights but also about the maximum weight. Using this simple observation Berman's proof can be simplified in the next section.

\section{Simplified proof}\label{sec:ProofBerman}

Here is a simplified proof of the main lemma from \cite{Berman} proving that adding a nice claw to the current independent set $A$ improves $w^2(A)$. We believe this proof gives some more insight into what is really happening behind the math thanks to Observation \ref{obs}.
\begin{lemma}\label{lem:Berman2}
If $C$ is a nice claw, then $T_C$ improves $w^2(A)$.
\end{lemma}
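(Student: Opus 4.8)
The goal ``$T_C$ improves $w^2(A)$'' unwinds to a single numerical inequality. Since the talons lie outside $A$ while $N(T_C,A)\subseteq A$, the swap $A\mapsto A\cup T_C\setminus N(T_C,A)$ simply replaces the removed vertices by the talons, so
\[
w^2\bigl(A\cup T_C\setminus N(T_C,A)\bigr)-w^2(A)=w^2(T_C)-w^2\bigl(N(T_C,A)\bigr).
\]
Thus it suffices to prove $w^2(T_C)>w^2(N(T_C,A))$. The case where $C$ is good by property~(1), i.e.\ $N(T_C,A)=\emptyset$, is immediate: nothing is removed and $w^2(T_C)>0$. So the real work lies entirely in property~(2).

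First I would exploit minimality. Because $C$ is a \emph{nice} claw (a minimal good claw), no talon can be dropped while preserving property~(2), so every $u\in T_C$ must satisfy $charge(u,Z_C)>0$. By the definition of $charge$ this forces $n(u,A)=Z_C$ for every talon and turns the good-claw inequality into
\[
\sum_{u\in T_C}\Bigl(w(u)-\tfrac12 w(u,A)\Bigr)>\tfrac12\,w(Z_C).
\]
Writing $W:=w(Z_C)$, the identity $n(u,A)=Z_C$ says that $W$ is the maximum weight among the $A$-neighbours of each talon; in particular every vertex of $N(T_C,A)$ has weight at most $W$, which is exactly the content of Observation~\ref{obs} and gives $w^2(v)\le W\,w(v)$ for each removed vertex $v$.

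The heart of the argument, and the step I expect to be delicate, is bounding $w^2(N(T_C,A))$ without overcounting the center. Since $Z_C$ is adjacent to every talon it lies in $N(u,A)$ for all $u$, so a naive union bound would count its weight $|T_C|$ times. I would therefore split off the center: bounding $w^2$ on $N(T_C,A)\setminus\{Z_C\}$ by $W\,w(\cdot)$ and summing the per-talon masses $w(u,A)-W=w\bigl(N(u,A)\setminus\{Z_C\}\bigr)$ yields
\[
w^2\bigl(N(T_C,A)\bigr)\le W\sum_{u\in T_C}w(u,A)-(|T_C|-1)W^2 .
\]
Feeding in the good-claw inequality above, which bounds $\sum_u w(u,A)$ by $2\sum_u w(u)-W$, turns the right-hand side into $2W\sum_u w(u)-|T_C|W^2$. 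Finally I would close the gap by completing the square: $\sum_{u\in T_C}(w(u)-W)^2\ge 0$ rearranges to $w^2(T_C)=\sum_u w^2(u)\ge 2W\sum_u w(u)-|T_C|W^2$, and chaining the two estimates gives $w^2(T_C)>w^2(N(T_C,A))$, as required. The only genuine obstacle is the bookkeeping around the center vertex; once it is separated out, Observation~\ref{obs} and the elementary inequality $\sum_u(w(u)-W)^2\ge 0$ do all the remaining work.
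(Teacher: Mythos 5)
Your proof is correct and follows essentially the same route as the paper's: minimality forces $n(u,A)=Z_C$ for every talon, Observation~\ref{obs} converts squared weights of the removed vertices into $W\cdot w(\cdot)$, the center's multiple appearances in the per-talon neighbourhoods are corrected by a $(|T_C|-1)W^2$ term, and the argument closes with $\sum_{u\in T_C}(w(u)-W)^2\ge 0$. The only cosmetic difference is that you aggregate everything into one global inequality, whereas the paper reduces to the per-talon inequality $w^2(u)-w^2(u,A)+w^2(v)\ge 2w(v)\,charge(u,v)$ and sums afterwards; the ingredients are identical.
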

\begin{proof}
Let $A$ be the current independent set and let $C = Z_C \cup T_C = \{v\} \cup T$ be a nice claw. We need to show that the weight of what is added ($T$) is more than the weight of what is lost ($N(T,A)$), so we need to show that
\begin{equation}\label{first}
w^2(T) > w^2(T,A),
\end{equation}
or equivalently,
\begin{equation}\label{second}
w^2(T) - w^2(T,A-\{v\}) > w^2(v).
\end{equation}
To proof that for a nice claw \eqref{second} holds, we proof the following claim.
\begin{claim}\label{claim1}
Let $C = \{v\} \cup T$ be a nice claw. Then
\begin{equation*}
w^2(T,A-\{v\}) \leq \sum_{u \in T} \left( w^2(u,A) - w^2(v) \right).
\end{equation*}
\end{claim}
\begin{proof}
By definition, $w^2(T,A-\{v\})$ 
is the sum of the squared weights of all neighbours of the talons $T$ in $A$ excluding $v$.
Summing over $T$ rather than the neighbourhood of $T$, this can be bounded by $\displaystyle \sum_{u \in T} w^2(u,A-\{v\})$. This is an upper bound, because in this expression vertices that are neighbours of more than one vertex in $T$ are counted multiple times. Therefore, $\displaystyle w^2(T,A-\{v\}) \leq \sum_{u \in T} w^2(u,A-\{v\})$. Noting that $w^2(u,A-\{v\})$ equals $w^2(u,A) - w^2(v)$, the claim follows.
\end{proof}
For the first term in \eqref{second}, write $w^2(T) = \sum_{u \in T} w^2(u)$. Now by Claim \ref{claim1}, the following implies \eqref{second}.
\begin{equation}\label{fourth}
\sum_{u \in T} \left( w^2(u) - w^2(u,A) + w^2(v) \right) > w^2(v).
\end{equation}
To show that \eqref{fourth} holds when $C$ is a nice claw, we proceed to the second claim.
\begin{claim}\label{claim2}
Let $C = \{v\} \cup T$ be a nice claw. Then $v = n(u,A)$ for all $u$ in $T$ and
\begin{equation}\label{fifth}
w(v) < 2 \sum_{u \in T} charge(u,v).
\end{equation}
\end{claim}
\begin{proof}
Equation \eqref{fifth} just follows from the definition stating $\sum_{u \in T} charge(u,v) > \frac{1}{2}w(v)$. Also, as $C$ is a nice claw, it is minimal, implying that every term on the right-hand side of \eqref{fifth} is positive. By the definition of $charge$, this is true only if $v$ is the maximum weight neighbour of $u$ within $A$.
\end{proof}

So proving that \eqref{fourth} holds when $C$ is a nice claw has now been reduced by Claim \ref{claim2} to showing that whenever $charge(u,v)>0$ we have
\begin{equation*}
w^2(u) - w^2(u,A) + w^2(v) \geq 2w(v)charge(u,v).
\end{equation*}
Here we plugged in \eqref{fifth} only once in the right-hand side. By the definition of $charge$, this boils down to proving that
\begin{equation}\label{seventh}
w^2(u) - w^2(u,A) + w^2(v) \geq 2w(u)w(v) - w(v)w(u,A)
\end{equation}
holds whenever $v$ is the maximum weight neighbour of $u$.

From this point on we will deviate from the proof of Berman \cite{Berman}. He now scales the quantities, makes a case distinction and rewrites the equations algebraically until it is clear they are indeed true. However, \eqref{seventh} can be shown more easily using Observation \ref{obs}. Plugging this into \eqref{seventh} yields
\begin{equation*}
w^2(u) - w(v)w(u,A) + w^2(v) \geq 2w(u)w(v) - w(v)w(u,A).
\end{equation*}
The terms $w(v)w(u,A)$ now cancel and what is left is
\begin{equation}\label{eighth}
w^2(u) + w^2(v) \geq 2w(u)w(v),
\end{equation}
which is obviously true as this is equivalent to
\begin{equation*}
(w(u)-w(v))^2 \geq 0.
\end{equation*}
We have now proved that when $C$ is a nice claw, \eqref{first} holds, and thus $T_C$ improves $w^2(A)$.
\end{proof}

%
%
%
%
%
\chapter{Discussion}\label{chap:Discussion}

In this chapter we will discuss possible directions for further research. The LP and the SDP relaxations for the problem are discussed in Section \ref{sec:DiscLP}. Section \ref{sec:DiscBerman} considers possible ways in which the weighted approximation algorithm of Berman \cite{Berman} could be lightly changed. Section \ref{sec:IS} continues with a discussion about the difference in the results on the independent set problem in bounded degree graphs and the current results on $k$-set packing. Finally Section \ref{sec:WeightedVSUnweighted} discusses the problems arising when one tries to generalise the unweighted approximation algorithms with weights. Sections \ref{sec:DiscLP}, \ref{sec:IS} and \ref{sec:WeightedVSUnweighted} suggest future research directions and contain some conjectures. 


\section{LP and SDP relaxations}\label{sec:DiscLP}

Here is a summary of the current results.
\begin{enumerate}
  \item The standard LP relaxation for $k$-set packing has integrality gap $k - 1 + \frac{1}{k}$. In the case of $k$-dimensional matching (i.e. when the hypergraph is $k$-partite) the integrality gap equals $k - 1$. Chan and Lau \cite{LapChiLau} gave algorithms for these cases. These results and algorithms also extend to the weighted case.
  \item The intersecting family LP for $k$-set packing has integrality gap at most $\frac{k}{3} + 1 + \varepsilon$ (Theorem \ref{thm:IntegralityGap2}). It is not known whether this result also extends to the weighted case.
  \item By the results of \cite{LapChiLau}, there also exists a polynomially sized LP for $k$-set packing with integrality gap at most $\frac{k}{3} + 1 + \varepsilon$. We don't know whether this is also true for the weighted version either.
  \item Also by \cite{LapChiLau}, the Lov\'{a}sz Theta function is at least as strong as the intersecting family and therefore this SDP relaxation has integrality gap at most $\frac{k}{3} + 1 + \varepsilon$.
\end{enumerate}

\subsection{Extending results to the weighted case}\label{subsec:WeightedLP}

\paragraph{Unweighted versus weighted} While the results for the standard LP extend to the weighted case, the results for the intersecting family LP do not extend in an obvious way. This is for the same reason why the unweighted approximation algorithms for $k$-set packing do not easily generalise to the weighted case (c.f. Section \ref{sec:WeightedVSUnweighted}): the local search technique is relying crucially on cardinality. For example, $\mathcal{F}_1(e)$ in the proof of Theorem \ref{thm:IntegralityGap2} need not be an intersecting family in the weighted case: the objective function might increase even by adding less sets than are removed. We elaborate on this in Section \ref{sec:WeightedVSUnweighted}. 

\paragraph{Possible research directions} Perhaps there is another way to partition the hyperedges rather than in $\mathcal{F}_1$, $\mathcal{F}_2$ and $\mathcal{F}_{3+}$ that does provide a way to extend the result to the weighted case. For the weighted problem the setting of $\mathcal{F}_1$, $\mathcal{F}_2$ and $\mathcal{F}_{3+}$ does not make sense and nothing can be proved. In the weighted case it seems to make sense to define a $t$-locally optimal solution as a solution where adding at most $t$ new sets and losing any number of sets does not yield an improvement (instead of losing less than $t$ sets in the unweighted case). But perhaps in the weighted case it will prove to be worthwhile to consider the integral optimum solution rather than a $t$-locally optimum.

The proof of the existence of a polynomially sized LP in Theorem \ref{thm:LP2} depends on the existence of small kernels for every intersecting family. If a new result on the integrality gap of some LP for weighted $k$-set packing relies on intersecting families, this result still holds. The SDP relaxation still captures all intersecting family constraints in the weighted case (one can just add weights to the objective function and nonnegativity constraints on them), so if one proves a result for weighted $k$-set packing using these intersecting families, that result immediately extends to the polynomially sized LP and SDP relaxation.

\begin{prob}
Narrow the gap between the integrality gap for relaxations of the weighted and the unweighted $k$-set packing problem.
\end{prob}

\subsection{Smaller bound on integrality gap}\label{subsec:IntegralityGap}

Another way to improve upon the current results is to further decrease the upper bound on the integrality gap on LP relaxations like the intersecting family. This seems likely to be possible, because all unweighted approximation algorithms achieve bounds of $\frac{k+2}{3} + \varepsilon$ or $\frac{k+1}{3} + \varepsilon$ while the current result is $\frac{k}{3} + 1 + \varepsilon$. Perhaps some ideas from these algorithms can be extended to the integrality gap of the LP relaxation.

\begin{prob}
Improve the integrality gap of relaxations for the unweighted $k$-set packing to $\frac{k+2}{3} + \varepsilon$ or better.
\end{prob}

\paragraph{Towards a gap of $\frac{k+1}{3} + \varepsilon$} In particular we would like to point out that it may be worthwhile to see if the idea from the quasi-polynomial time $\frac{k+1}{3} + \varepsilon$ from $\cite{Mastrolilli}$ can be extended. The proof of this approximation guarantee also depends on Lemma \ref{lem:BermanMIS2} as did our new bound of $\frac{k}{3} + 1 + \varepsilon$. Their algorithm uses slightly more crafted ideas, but these do not seem to extend to the integrality gap of the LP in a straightforward way. Roughly speaking they are able to bound $\mathcal{F}_1$ by $\varepsilon |M|$ rather than $|M|$. This is then added twice like in the proof of Theorem \ref{thm:IntegralityGap2}. This effectively decreases the bound by $\frac{2}{3}|M|$ which is exactly the current difference between the results. Perhaps by altering the argument a little bit, one could use the idea of this proof to improve the bound for the integrality gap for the intersecting family LP to, say, $\frac{k+1}{3} + \varepsilon$.

\paragraph{Towards a gap of $\frac{k+2}{3} + \varepsilon$} Also the idea from the $\frac{k+2}{3}$-approximation by Sviridenko and Ward \cite{Sviridenko} might be interesting to take a closer look at. Very roughly speaking they bound $\mathcal{F}_2$ by $2 |M \setminus \mathcal{F}_1|$. This cancels against the bound for $\mathcal{F}_1$ and hence they obtain $3 |\mathcal{F}| \leq k|M| + 2|M|$. A similar idea could perhaps be used to bound the LP-values of the sets $\mathcal{F}_1$ and $\mathcal{F}_2$.

Since the local search techniques have reached their limits in terms of their approximation guarantee, other techniques have to be sought to improve the approximation guarantee. The results on the LP and SDP relaxations for $k$-set packing are scarce and we believe more research in this area could turn out to be fruitful. We make the following conjecture.

\begin{conjecture}
The integrality gap of relaxations for the unweighted $k$-set packing can be bounded by $\frac{k+1}{3} + \varepsilon$.
\end{conjecture}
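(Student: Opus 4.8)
The plan is to stay inside the local-search framework of Theorem~\ref{thm:IntegralityGap2} and to sharpen exactly the step that is currently too lossy. Fix an $O(\log n)$-locally optimal matching $M$, a feasible solution $x$ to the intersecting family LP, and the partition $\mathcal{F}=\mathcal{F}_1\cup\mathcal{F}_2\cup\mathcal{F}_{3+}$. Re-deriving the degree inequality gives
\begin{equation*}
3\,x(\mathcal{F}) \;\le\; k|M| + 2\,x(\mathcal{F}_1) + x(\mathcal{F}_2),
\end{equation*}
so the target $\frac{k+1}{3}+\varepsilon$ follows once $2\,x(\mathcal{F}_1)+x(\mathcal{F}_2)\le(1+3\varepsilon)|M|$. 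It therefore suffices to (a) re-establish $x(\mathcal{F}_2)\le(1+\varepsilon)|M|$ by a \emph{correct} argument, and (b) improve the crude estimate $x(\mathcal{F}_1)\le|M|$ of~\eqref{F1} to $x(\mathcal{F}_1)\le\varepsilon|M|$. This is precisely the $\tfrac{2}{3}|M|$ gap between the two integrality-gap bounds, counted twice, and matches the heuristic in Section~\ref{sec:DiscLP} that one should bound $\mathcal{F}_1$ by $\varepsilon|M|$ rather than $|M|$.

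For (a) I would first repair the flaw flagged in the opening note: a dense induced subgraph of the auxiliary multigraph $H$ need \emph{not} yield a genuine improving set, since two $\mathcal{F}_2$-edges can be hyperedges overlapping on a vertex uncovered by $M$, so they cannot both be added. The fix I would pursue is to replace the purely edge-counting density estimate (Lemma~\ref{lem:BermanMIS2} and its corollary Lemma~\ref{lem:BermanMIS3}) by a weighted version that charges such overlaps to the LP constraints: parallel or mutually intersecting $\mathcal{F}_2$-sets sharing a free vertex $v$ have total $x$-mass at most $1$ by $x(\delta(v))\le 1$, and families through a common clique are controlled by an intersecting family constraint. Concretely, build $H$ from a maximal pairwise-disjoint subfamily of $\mathcal{F}_2$, so that every subgraph with strictly more edges than vertices really does correspond to a valid improving set of $M$, and bound the discarded overlapping mass separately through the degree and clique constraints. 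Local optimality against $O(\log n)$-swaps then controls the genuine edges, and the charged remainder keeps $x(\mathcal{F}_2)\le(1+\varepsilon)|M|$ as in~\eqref{F2}.

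For (b) I would import the augmenting-structure idea behind the $\frac{k+1}{3}+\varepsilon$ algorithm of~\cite{Mastrolilli}. Augment the multigraph by attaching, to each $m\in M$ carrying positive $\mathcal{F}_1$-mass, a single loop. A cycle of $\ell$ degree-two edges is net-zero, but a cycle together with one loop is a collection of $\ell+1$ mutually addable hyperedges replacing $\ell$ sets of $M$, i.e.\ a genuine improving set with strictly more edges than vertices; a lone loop has edges equal to vertices and is correctly non-improving, which is exactly the distinction the broken line elided. If $x(\mathcal{F}_1)>\varepsilon|M|$, then by the (weighted) density lemma the augmented graph contains a small dense subgraph through a loop, producing an improving set of size $O(\log n)$ and contradicting local optimality; this forces $x(\mathcal{F}_1)\le\varepsilon|M|$ and closes the count. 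The resulting bound transfers to the polynomially sized LP of Theorem~\ref{thm:LP2} and the SDP of Section~\ref{sec:SDP} verbatim, since both invoke only intersecting family constraints.

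The hard part will be making both arguments survive the passage from the integral algorithm to the \emph{fractional} LP. The augmenting cascades of~\cite{Mastrolilli,Sviridenko} are genuinely integral --- they swap pairwise-disjoint $k$-sets in and out --- whereas here the edges of $H$ are merely the support of $x$ and may be heavily overlapping and fractional. The crux is a structural ``support lemma'' asserting that, for the purpose of the $\mathcal{F}_1$ and $\mathcal{F}_2$ counts, any feasible $x$ may be replaced by one whose $\mathcal{F}_2$-support decomposes into a bounded number of pairwise-disjoint families (so that density really means addability) while preserving $x(\mathcal{F}_1)$ and $x(\mathcal{F}_2)$ up to $\varepsilon|M|$. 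Proving such a decomposition --- the fractional analogue of the disjointness the original proof silently assumed --- is where I expect the real difficulty to lie, and it is exactly the point at which the proof of Theorem~\ref{thm:IntegralityGap2} broke.
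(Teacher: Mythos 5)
The statement you are proving is stated in the paper only as a conjecture: the paper offers no proof, and its surrounding discussion (Subsection~\ref{subsec:IntegralityGap}) already proposes essentially your route --- replace the bound $x(\mathcal{F}_1)\le|M|$ of~\eqref{F1} by $x(\mathcal{F}_1)\le\varepsilon|M|$ using the ideas of~\cite{Mastrolilli} --- while explicitly cautioning that those ideas ``do not seem to extend to the integrality gap of the LP in a straightforward way.'' Your arithmetic reduction ($3x(\mathcal{F})\le k|M|+2x(\mathcal{F}_1)+x(\mathcal{F}_2)$, so it suffices to get $2x(\mathcal{F}_1)+x(\mathcal{F}_2)\le(1+3\varepsilon)|M|$) is correct and matches the paper's heuristic, but everything that would turn the heuristic into a proof is deferred to two lemmas you do not prove, and you say so yourself. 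As it stands this is a research plan, not a proof.

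Concretely, both load-bearing steps have genuine gaps. For (a), restricting $H$ to a maximal pairwise-disjoint subfamily of $\mathcal{F}_2$ does make dense subgraphs correspond to real improving sets, but the discarded mass is then no longer an $O(\varepsilon|M|)$ error term you can wave at the LP constraints: a single edge of $M$ meets hyperedges of $\mathcal{F}_2$ through up to $k$ elements, each free vertex of each such hyperedge can anchor an intersecting bundle of LP-mass up to $1$, and nothing in the degree or clique constraints caps the total discarded mass by $(1+\varepsilon)|M|$ without a new argument --- this is exactly the flaw flagged on page iii, restated rather than repaired. For (b), the implication ``$x(\mathcal{F}_1)>\varepsilon|M|$ forces a small dense subgraph \emph{through a loop}'' does not follow from Lemma~\ref{lem:BermanMIS2} or~\ref{lem:BermanMIS3}: those lemmas produce \emph{some} subgraph with more edges than vertices, which may lie entirely in the loop-free part (where it proves nothing about $\mathcal{F}_1$), and conversely $\varepsilon|M|$ isolated loops never violate the density hypothesis on their own, since a lone loop has edges equal to vertices. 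To conclude $x(\mathcal{F}_1)\le\varepsilon|M|$ you would need that almost every $m\in M$ carries \emph{no} $\mathcal{F}_1$-mass, and the existence or non-existence of one dense subgraph cannot certify a statement of that form; the integral argument of~\cite{Mastrolilli} needs considerably more structure (iterated augmentation over disjoint witnesses) precisely here. Finally, your ``support lemma'' --- replacing $x$ by a solution whose $\mathcal{F}_2$-support splits into boundedly many disjoint families while preserving $x(\mathcal{F}_1)$ and $x(\mathcal{F}_2)$ up to $\varepsilon|M|$ --- is asserted, not proved, and is the entire content of the conjecture: with it the rest is bookkeeping, without it nothing above the trivial bounds is established.
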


\section{Improving Berman's weighted approximation}\label{sec:DiscBerman}

\subsection{Generalising $charge$ and claws}\label{subsec:DiscBerman1}

The analysis as presented in Section \ref{sec:ProofBerman} shows that slight changes to the algorithm do not simply constitute an improvement of the approximation guarantee to $\frac{k}{c}$ for some $c > 2$. Let us replace the $\frac{1}{2}$ in the definition of $charge(u,v)$ to $\frac{1}{c}$ and the $\frac{1}{2}$ in the definition of a nice (good) claw to $\frac{1}{c}$.

\paragraph{Approximation guarantee} Lemma \ref{lem:Berman1} can now be easily adapted to hold for any $c>2$. The proof remains exactly the same and \textsc{WishfulThinking} now has an approximation guarantee of $\frac{k}{c}$.

\paragraph{Nice claw improves $w^2(A)$} However, Lemma \ref{lem:Berman2} is no longer true and this can be seen from our simplified proof. In the proof that the talons of a nice claw improve $w^2(A)$ when the definitions of $charge(u,v)$ and of a nice claw have been altered, the analysis remains exactly the same except for the fact that in Equation \eqref{eighth}, the factor 2 in the right-hand side changes into a $c$:

\begin{equation}\label{eighth2}
w^2(u) + w^2(v) \geq cw(u)w(v),
\end{equation}

However, Equation \eqref{eighth2} is only true for all $u$ and $v$ for the values $c=0$ and $c=2$. Since $c=0$ is not possible because $c$ occurs in the denominator in the definition of $charge(u,v)$ and of a nice claw, $w^2(A)$ improves only when $c=2$ when following the current analysis.

It is possible to incorporate an extra constraint in the definition of a nice claw to make sure that Equation \eqref{eighth2} holds for some $c > 2$. Then Lemma \ref{lem:Berman2} is true because of the altered definition. However, the extra constraint now causes trouble in the proof of Lemma \ref{lem:Berman1}, which is then not necessarily true anymore.

\subsection{Generalising the weight function}\label{subsec:DiscBerman2}

Intuitively, what is the crucial point why $w^2$ might behave differently than $w$? In general, looking at the squared weight function (or at $w^c$ for any $c>1$) is slightly more biased towards larger weights. When a vertex has some weight $m$ and we add 1 to its weight, $w$ increases by 1 while $w^2$ increases by $2m+1$. Also the square of the weight of one vertex might be more than the weight of two vertices. $w^2$ prefers one vertex of weight 3 to two vertices of weight 2, while $w$ prefers the two vertices of weight 2 to the single vertex of weight 3. So by guiding the search by $w^2$ rather than $w$, an iteration might decrease the real objective function but it is more difficult to get stuck in an inferior locally optimal solution and hence a better result might be achieved in the end.

One could also try to improve $w^p$ for some $2 < p \in \mathbb{N}$ next to the use of the parameter $c$ rather than 2. Following the same analysis as in Section \ref{sec:ProofBerman}, Equation \eqref{eighth} then translates to
\begin{equation*}
w^p(u) + w^p(v) \geq c w(u) w^{p-1}(v).
\end{equation*}
This is trivially true for $c = 0$ and even $p$, for $p = c = 1$ and for $p = c = 2$. Again the first is not a real option, the second option gives an approximation guarantee of $\frac{k}{1}$ which is not an improvement and the third option is the one that leads to Berman's result. 
This shows that using the current analysis this is the best possible result, so to improve the approximation guarantee one really needs another analysis or another approach. See also \cite{BermanWeighted2} for other reasoning about generalising Berman's algorithm.

\section{Relation to the independent set problem}\label{sec:IS}

Here is a summary of the relation between $k$-set packing and the independent set problem.

\begin{enumerate}
  \item Finding a maximum set packing is equivalent to finding a maximum independent set in the conflict graph of the set packing instance.
  \item Finding a maximum $k$-set packing can be reduced to finding a maximum independent set in the $k+1$-claw free conflict graph of the $k$-set packing instance. 
\end{enumerate}

\subsection{Results on the independent set problem}\label{subsec:ResultsIS}

\paragraph{General graphs} The following results are known for the maximum independent set problem in general. Let $n$ be the number of vertices in the graph.
\begin{enumerate}
  \item The problem is NP-hard and it is hard to approximate within $n^{1 - \varepsilon}$ unless NP-hard problems have randomised polynomial time algorithms \cite{CliqueIsHard}.
  \item There is an approximation algorithm that achieves an approximation guarantee of $\Theta\left( \frac{n}{\log^2 n} \right)$ \cite{ApproxGeneralIS}.
\end{enumerate}
\paragraph{Bounded degree graphs}
When the maximum degree of every vertex is assumed to be bounded by some $\Delta$, approximating the problem becomes considerably easier. The following results are known for the maximum independent set problem on bounded degree graphs.
\begin{enumerate}
  \item Assuming the Unique Games Conjecture \cite{UGC} it is hard to approximate within $O \left( \frac{\Delta}{\log^2 \Delta} \right)$ \cite{Bounded1}.
  \item 
  The greedy algorithm is an obvious $\Delta$-approximation. Hochbaum \cite{Hochbaum} was the first to give an approximation algorithm with approximation guarantee $\frac{\Delta}{2}$, which was improved to $\frac{\Delta+2}{3}$ \cite{GreedIsGood,HalldorssonLau}. Berman and F\"{u}rer \cite{BermanMIS} give a $\frac{\Delta+3}{5} + \varepsilon$-approximation for even $\Delta$ and a $\frac{\Delta+3.25}{5} + \varepsilon$-approximation for odd $\Delta$, which was slightly improved in \cite{BermanFujito}. Then a major jump came with an $O \left( \frac{\Delta}{ \log \log \Delta} \right)$-approximation, a $\frac{\Delta}{6}(1+o(1))$-approximation, a proof that greedy achieves $\frac{\Delta+2}{3}$ and that Berman and F\"{u}rer really achieved $\frac{\Delta+3}{4}$ \cite{Bounded2}. The currently best result is an $O \left( \frac{\Delta \log \log \Delta}{\log \Delta} \right)$-approximation in polynomial time that also extends to the weighted case \cite{Halperin,SpecialWeight,Vishwanathan}.
\end{enumerate}

\subsection{From bounded degree to claw-free graphs}\label{subsec:DiscIS2}

\paragraph{Comparison} The results on the maximum independent set problem in bounded degree graphs are much stronger than the results on $k$-set packing. For example, the gap between its hardness ($\Omega \left( \frac{\Delta}{\log^2 \Delta} \right)$) and its best approximation guarantee ($O \left( \frac{\Delta \log \log \Delta}{\log \Delta} \right)$) is much smaller ($\Omega(\frac{k}{\log k})$ versus $\frac{k+1}{3} + \varepsilon$). The constraint that the degree is bounded apparently allows much more arguments than the constraint that the size of an independent set in the neighbourhood of every vertex is bounded. 

\begin{prob}
Narrow the gap between the approximation guarantee of $k$-set packing and the independent set problem on bounded degree graphs.
\end{prob}

\paragraph{Mimicking the bounded degree algorithm} As an example, consider the $O \left( \frac{\Delta \log \log \Delta}{\log \Delta} \right)$-approximation for weighted independent set in bounded degree graphs by Halperin \cite[Section 5]{Halperin}. In a nutshell this solves a semidefinite programming relaxation and partitions the resulting vectors in sets $S_0$, $S_1$ and $S_2$. It then uses the greedy approach on $S_0$ to find an independent set $I_0$, projects and normalises the vectors in $S_1$ and selects some of them to find $I_1$, and just sets $I_2 = S_2$. It then returns the largest weight independent set from $I_0$, $I_1$ and $I_2$. By a good choice of a parameter the approximation guarantee is achieved.

If one tries to use the same approach on the independent set problem in $k+1$-claw free graphs it is the set $S_0$ that is causing trouble. In the bounded degree case it is trivially true that the greedy algorithm produces an independent set $I_0$ of total weight at least $\frac{w(S_0)}{\Delta+1}$ where $w(S_0)$ is the sum of the weights of the vertices in $S_0$. 
Intuitively $S_0$ is already a good structured set so it is not needed to do anything smarter than the greedy algorithm.

However, in the $k+1$-claw free case the greedy algorithm does not have such a sufficient performance guarantee. It has a performance guarantee of $k$, 
so the value of the greedy solution can be compared to the optimal independent set size in $S_0$. But it can not be compared to the weight (or the cardinality) of $S_0$.

\paragraph{Intuition and research direction} Morally it seems there should not be such a difference between the independent set problem in bounded degree graphs and claw-free graphs. Look at a vertex $v$ in a claw-free graph and at its neighbours $N(v)$. As the maximum size of an independent set in $N(v)$ is at most $k$, one could look at $N(v)$ as the union of $k$ cliques (in relation to the set packing instance: one clique for the sets that all share one of the $k$ elements, modulo some duplicates). And for the independent set problem, a clique is not that different from a vertex: it is only possible to pick one of the vertices. It would be an interesting research direction to see where exactly the analogy with the bounded degree graphs stops, or to somehow change the algorithm for the bounded degree graphs and achieve an improved approximation guarantee for claw-free graphs.

The intuition in this paragraph inclines us to believe the approximation guarantee for $k$-set packing can be brought down further. Perhaps it it not $\frac{k}{\log k}$, which is the best known hardness bound, but we make the following conjecture.

\begin{conjecture}
The approximation guarantee for $k$-set packing can be bounded by some function strictly smaller than $\frac{k}{3}$.
\end{conjecture}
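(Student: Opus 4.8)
The plan is to abandon local search entirely and round the semidefinite relaxation directly. Local search cannot reach the goal: Theorems~\ref{thm:HardnessSviridenko} and~\ref{thm:HardnessFurerYu} exhibit a locality gap of $\frac{k}{3}$, indeed $\frac{k+1}{3}$, even for improvement sets of size polynomial in $n$, so no refinement of the claw search of Chapter~\ref{chap:Weighted} or of \textsc{WishfulThinking} can dip below $\frac{k}{3}$. The only tool developed here that is strong enough even in principle is the $\vartheta_3$ SDP~\eqref{Theta3LP}, which by Lemma~\ref{lem:SDPnew3} dominates the intersecting family LP. The concrete target is therefore to solve~\eqref{Theta3LP} and round its orthonormal representation into an independent set of weight at least $w(\mathrm{OPT})\cdot g(k)$ with $1/g(k)=o(k)$ --- ideally mimicking the $O(\Delta\log\log\Delta/\log\Delta)$ bound of Halperin~\cite{Halperin} for weighted independent set on degree-$\Delta$ graphs, which would yield a guarantee of $O(k\log\log k/\log k)$, asymptotically well below $\frac{k}{3}$.

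The guiding principle, following Section~\ref{sec:IS}, is to replace everywhere the role of the maximum degree $\Delta$ by the number of cliques needed to cover a neighbourhood. In a $(k+1)$-claw-free conflict graph the neighbourhood of any vertex is a union of at most $k$ cliques, one per element of the underlying $k$-set, and for the independent-set objective each clique contributes at most one vertex. Concretely I would follow Halperin's three-way split of the SDP vectors by squared length into a dense part $S_0$, an intermediate part $S_1$ and a sparse part $S_2$, take $I_2\subseteq S_2$ essentially directly, obtain $I_1\subseteq S_1$ by projecting the vectors onto a random direction, normalising and thresholding, and obtain $I_0\subseteq S_0$ greedily, finally returning the heaviest of $I_0,I_1,I_2$. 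In the projection and direct phases the ordinary degree is unbounded, but one can still union-bound over the at most $k$ cliques covering each selected vertex, arguing that at most one vertex per clique survives; I expect these phases to port with $k$ in place of $\Delta$ after re-running the parameter balancing.

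The crux, and the step I expect to be the main obstacle, is the dense part $S_0$. In the bounded-degree setting plain greedy returns an independent set of weight at least $w(S_0)/(\Delta+1)$; in a claw-free graph greedy only certifies a factor $k$ against the optimum inside $S_0$ and gives no lower bound in terms of $w(S_0)$ itself, since a single clique may carry arbitrarily large total weight while admitting only one vertex in any independent set. The natural fix is to contract each of the at most $k$ covering cliques to one representative and run the bounded-degree routine on the resulting graph, whose clique-degree is $O(k)$; but contraction must assign a weight to each super-vertex, and no choice obviously preserves more than a $\Theta(1/k)$ fraction of $w(S_0)$. Making this quantitative --- equivalently, bounding how much weight $\vartheta_3$ can certify inside a union of cliques, i.e. sharpening the integrality gap of~\eqref{Theta3LP} on the dense vectors --- is exactly the open point, and it is precisely where the claw-free hypothesis is genuinely weaker than a degree bound. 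A bound of the form $w(S_0)/f(k)$ with $f(k)=o(k)$ for this phase would, through the split above, immediately establish the conjecture.
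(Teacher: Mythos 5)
This statement is a conjecture, and the thesis offers no proof of it---only the heuristic motivation in Section~\ref{sec:IS}. Your proposal likewise is not a proof: you state yourself that the treatment of the dense part $S_0$ is ``exactly the open point,'' and without a bound of the form $w(S_0)/f(k)$ with $f(k) = o(k)$ (or even merely $f(k) < k/3$) for that phase, the three-way case analysis never closes and no approximation guarantee below $\frac{k}{3}$ is established. What you have written is, almost step for step, the paper's own discussion in the subsection ``Mimicking the bounded degree algorithm'': the same Halperin split into $S_0, S_1, S_2$, the same observation that greedy certifies $w(S_0)/(\Delta+1)$ under a degree bound but only a factor $k$ against the optimum \emph{inside} $S_0$ under claw-freeness, and the same conclusion that this is where the analogy breaks. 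So you have correctly located the obstacle, but locating the obstacle is the content of the conjecture's motivation, not a resolution of it.

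Two further cautions. First, your opening claim that the locality-gap theorems already rule out every local-search route is stronger than what those theorems say: they are lower bounds of $\frac{k}{3}$ and $\frac{k+1}{3}$ on the gap of pure $t$-local optima, which does not exclude hybrid methods (local search guided by an LP/SDP solution, or preprocessing followed by local search), so the reduction of the problem to ``round the SDP'' is not forced. Second, and more seriously for the plan itself, no upper bound on the integrality gap of \eqref{Theta3LP} better than $\frac{k}{3}+1+\varepsilon$ is known---and by the note on page iii even that bound's proof is flawed. Any rounding algorithm achieving ratio $g(k)$ would in particular certify an integrality gap of at most $g(k)$, so before the rounding strategy can possibly work one must first show that the SDP gap itself is strictly below $\frac{k}{3}$, which is open. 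The conjecture remains open, and your proposal should be read as a (reasonable, but already present in the paper) research direction rather than a proof.
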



\section{Unweighted versus weighted $k$-set packing}\label{sec:WeightedVSUnweighted}

\paragraph{General extension} For most problems the weighted version is not much more difficult than the unweighted version. For example, also weighted matchings can be found in graphs in polynomial time, and for a lot of problems adding weights to the LP-formulation does not change anything significantly. However, for $k$-set packing the difference between the weighted version and the unweighted version is nontrivial.

\paragraph{Extending $k$-set packing} As noted in Section \ref{sec:Unweighted}, the analysis of the unweighted case hugely depends on the cardinality of every set. This is because the algorithms rely on local search and analyse a locally optimal solution. However, adding more sets than you remove from your current solution may not be advantageous in the weighted case, as the total weight might decrease while the cardinality of the solution increases. And in the weighted case, it could be the case that the total weight increases when you add less sets than you remove from your current solution. This proves hard to be incorporated in the analysis, and the weights of the sets cannot be handled in a straightforward way. This is why the results for the unweighted case do not easily extend to the weighted case. There have been less results on the weighted case and the algorithms do not immediately follow from the unweighted results, although they all use local search techniques.

\begin{prob}
Improve the approximation guarantee for weighted $k$-set packing.
\end{prob}

\paragraph{Research direction} The best weighted approximation algorithms reduce the problem to the independent set problem on $k+1$-claw free graphs. There are no indications so far that better results can be obtained in the weighted $k$-set packing problem by holding back from this reduction, so this seems a good way to look at the problem. These weighted approximation algorithms achieve approximation guarantees of approximately $\frac{2k}{3}$ \cite{Chandra} and $\frac{k}{2}$ \cite{Berman}. Section \ref{sec:DiscBerman} considered possible ways to improve upon this last algorithm by changing it a little bit, aided by the simplified proof from Chapter \ref{chap:Weighted}. Berman and Krysta \cite{BermanWeighted2} considered what values of $w^\alpha$ are the best for every $k$ when one searches a 2-locally optimal solution and achieve an approximation guarantee of about $\frac{2k}{3}$. A natural extension of this would be to use this alternate weight function in a search for $t$-locally optimal solutions for some $t>2$. Considering such larger improving sets proved to work for the unweighted problem and it trivially works for the weighted case when one searches for improving sets of size $n$; the question is, how large do the improving sets have to be to get an improved approximation guarantee of, say, about $\frac{k}{3}$? Perhaps a value of $t = O(k)$ or $t = O(\log n)$ works. We think it is fruitful to investigate this possibility and either find a better approximation algorithm or an indication that this might not be helpful after all.

We believe a better approximation guarantee could be obtained by increasing the search space and we make the following conjecture.

\begin{conjecture}
The approximation guarantee for weighted $k$-set packing can be bounded by $\frac{k+c}{3} + \varepsilon$ for some fixed $c \geq 0$.
\end{conjecture}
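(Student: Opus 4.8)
The plan is to combine two facts that are already in hand. Theorem~\ref{thm:SDP2} establishes that the semidefinite program \eqref{Theta3LP} is a relaxation of $k$-set packing whose integrality gap is at most $\frac{k}{3}+1+\varepsilon$, so the only thing left to verify is that \eqref{Theta3LP} has polynomial size and can therefore be solved (approximately) in polynomial time. This parallels the proof of Theorem~\ref{thm:LP2}, with the pleasant difference that the size estimate here is cleaner and, as I will note below, free of any dependence on $k$, so in particular it does not invoke the kernel bound of Lemma~\ref{lem:OriginalKernel}.

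First I would recast \eqref{Theta3LP} in standard matrix form by passing to the Gram matrix $X$ defined by $X_{ij} = u_i \cdot u_j$. The requirement that the $u_i$ be vectors in some $\mathbb{R}^d$ is exactly the requirement that $X \succeq 0$, since every symmetric positive semidefinite matrix is the Gram matrix of a family of vectors (and one may take $d \leq n$). Under this substitution the orthogonality constraints $u_i\cdot u_j = 0$ for $(i,j)\in E$ become the linear equalities $X_{ij}=0$, the normalisation $\sum_i \|u_i\|^2 = 1$ becomes $\operatorname{tr}(X)=1$, and the objective $\sum_{i,j} u_i\cdot u_j$ becomes $\mathbf{1}^\top X\,\mathbf{1}$ with $\mathbf{1}$ the all-ones vector. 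Thus \eqref{Theta3LP} is the maximisation of a linear functional of an $n\times n$ symmetric PSD matrix subject to $|E|+1 = O(n^2)$ linear equality constraints, where $n=|\mathcal{C}|$.

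Next I would read off the size. The matrix variable has $\binom{n+1}{2}$ free entries, all constraint coefficients are $0$ or $1$, and there are $O(n^2)$ constraints, so the entire description is of size polynomial in $n$ with no dependence on $k$. The feasible region is a bounded spectrahedron (boundedness follows from $\operatorname{tr}(X)=1$ together with $X\succeq 0$), and it admits an efficient separation oracle, so by the ellipsoid method (or an interior-point method) the program can be optimised to any prescribed accuracy in time polynomial in $n$. This is precisely the sense in which \eqref{Theta3LP} is a polynomially sized SDP, and together with the integrality-gap bound of Theorem~\ref{thm:SDP2} it gives the theorem.

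I do not anticipate a real obstacle, as the statement is a bookkeeping consequence of Theorem~\ref{thm:SDP2} and the classical polynomial-time solvability of semidefinite programs. The single point deserving care is that the optimum of \eqref{Theta3LP} may be irrational, so ``solvable in polynomial time'' must be understood in the approximate sense; this is harmless, because the integrality gap is a property of the relaxation itself and is independent of how its optimum is computed.
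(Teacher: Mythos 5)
The statement you are trying to prove is a \emph{conjecture}: the paper states it as an open problem in Section \ref{sec:WeightedVSUnweighted} and offers no proof, so there is nothing for your argument to match. More importantly, what you have written is an argument for Theorem \ref{thm:SDP3} (the existence of a polynomially sized SDP relaxation with small integrality gap), which is a different statement on two independent counts. First, Theorems \ref{thm:IntegralityGap2}--\ref{thm:SDP3} concern the \emph{unweighted} problem, whereas the conjecture is about \emph{weighted} $k$-set packing; Section \ref{subsec:WeightedLP} explains explicitly why the integrality-gap argument does not carry over to weights --- the proof of Theorem \ref{thm:IntegralityGap2} leans on cardinality reasoning (e.g.\ that $\mathcal{F}_1(e)$ is an intersecting family because swapping two disjoint sets for one increases $|M|$), which fails once sets carry weights. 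Second, even in the unweighted setting, a bound on the integrality gap of a relaxation is not an approximation guarantee for an algorithm: the proof of Theorem \ref{thm:IntegralityGap2} compares the LP value to a $t$-locally optimal solution and supplies no rounding procedure, so ``the SDP has gap at most $\frac{k}{3}+1+\varepsilon$'' does not by itself yield a $\left(\frac{k+c}{3}+\varepsilon\right)$-approximation algorithm for anything, let alone for the weighted problem.

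There is also a soundness issue with the foundation you are building on: the note at the front of the thesis retracts the proofs of Theorems \ref{thm:IntegralityGap2}, \ref{thm:LP2}, \ref{thm:SDP2} and \ref{thm:SDP3} (the step ``an induced subgraph of $H$ with more edges than vertices constitutes an improving set'' is wrong), so even the unweighted integrality-gap bound you invoke is currently open. The currently best proven guarantee for weighted $k$-set packing remains Berman's $\frac{k+1}{2}$ (Chapter \ref{chap:Weighted}), and Section \ref{sec:DiscBerman} shows why small perturbations of that analysis do not push below it. Your bookkeeping about Gram matrices and the ellipsoid method is fine as far as it goes, but it addresses neither the weighted setting nor the algorithmic content the conjecture asks for.
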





\appendix
\chapter{Parameterized complexity}\label{app:Parameterized}

\begin{table}[htp]
\centering
\begin{tabular}{lllll}
  \toprule
  Complexity           & Reference                               &     &     & Remarks \\
  \midrule
  $O^*((5.7m)^m)$      & Jia, Zhang and Chen \cite{5.7kk}        & R   & 3   & \\
  $O^*(10.88^{3m})$    & Koutis \cite{Koutis1}                   & R   & k   & \\
  $O^*(16^m)$          & Chen et al \cite{SmallColorCoding}      & R   & k   & Also for weighted problem \\
  $O^*(2^{3m})$        & Koutis \cite{Koutis2}                   & R   & k   & \\
  $O^*(1.493^{3m})$    & Bj\"{o}rklund et al \cite{Bjorklund}    & R   & k   & \\
  \hline
  $O^*(2^{O(m)}(3m)!)$ & Downey and Fellows \cite{DowneyFellows} & D   & 3   & \\
  \multirow{2}{*}{$O^*(25.6^{3m})$} & \multirow{2}{*}{Koutis \cite{Koutis1}} & \multirow{2}{*}{D} & \multirow{2}{*}{k} & Originally $O^*(2^{O(mk)}) \geq O^*(32000^{3m})$ \\
                       &                                         &     &     & (\cite{SmallColorCoding,GreedyLocalization}) \\
  \multirow{2}{*}{$O^*(13.78^{3m})$} & \multirow{2}{*}{Fellows et al \cite{FellowsRosamond}} & \multirow{2}{*}{D} & \multirow{2}{*}{k} & Originally $exp(O(mk)) = O^*(12.7D)^{3m},$ \\
                       &                                         &     &     & $D \geq 10.4$ (\cite{SmallColorCoding,GreedyLocalization}) \\
  $O^*(12.8^{3m})$     & Chen \cite{Chen12.8}                    & D   & k   & \\
  $O^*(12.8^{3m})$     & Liu, Chen and Wang \cite{12.8mk}        & D   & k   & Also for weighted problem \\
  $O^*(12.8^{3m})$     & Chen et al \cite{SmallColorCoding}      & D   & k   & Also for weighted problem \\ 
  $O^*(7.56^{3m})$     & Wang and Feng \cite{WangFeng2}          & D   & 3   & Also for weighted problem \\
  $O^*(5.44^{3m})$     & Chen and Chen \cite{ChenChen}           & D   & k   & \\
  $O^*(4.61^{3m})$     & Liu et al \cite{GreedyLocalization}     & D   & 3   & \\
  $O^*(4^{3m})$        & Chen et al \cite{DivideAndConquer}      & D   & k   & Also for weighted problem \\ 
  $O^*(3.523^{3m})$    & Wang and Feng \cite{WangFeng1}          & D   & 3   & \\
  $O^*(32^m)$          & Feng et al \cite{WeightedParameterized} & D   & k   & Also for weighted problem \\
  \bottomrule
\end{tabular}
\caption{Parameterized complexity results of randomised (R) and deterministic (D) algorithms for $3-SP$. Column 4 indicates whether the result only applies to $3-SP$ (3) or whether it is derived from a more general result from $k-SP$ (k).}
\label{tab:3SP}
\end{table}

\begin{table}[htp]
\centering
\begin{tabular}{llll}
  \toprule
  Complexity         & Reference                            &     & Remarks \\
  \midrule
  $O^*(10.88^{mk})$  & Koutis \cite{Koutis1}                & R   & \\ 
  $O^*(4^{(k-1)m})$  & Chen et al \cite{DivideAndConquer}   & R   & Also for weighted problem \\
  $O^*(2^{mk})$      & Koutis \cite{Koutis2}                & R   & \\
  $O^*(f(m,k))$      & Bj\"{o}rklund et al \cite{Bjorklund} & R & $f(m,k) \approx \left(\frac{0.11 \cdot 2^m (1 - \frac{1.64}{m})^{1.64-m} m^{0.68} }{ (m-1)^{0.68} }\right)^k$ \\
  \hline
  $O^*(g(k,m))$      & Jia, Zhang and Chen \cite{5.7kk}     & D   & \\ 
  $O^*(25.6^{mk})$   & Koutis \cite{Koutis1}                & D   & Originally $O^*(2^{O(mk)})$ (\cite{SmallColorCoding,GreedyLocalization}) \\
  $O^*(13.78^{mk})$  & Fellows et al \cite{Fellows}         & D   & Originally $exp(O(mk))$ (\cite{SmallColorCoding,GreedyLocalization}) \\ 
  $O^*(12.8^{mk})$   & Chen \cite{Chen12.8}                 & D   & \\
  $O^*(12.8^{mk})$   & Liu, Chen and Wang \cite{12.8mk}     & D   & Also for weighted problem \\
  $O^*(5.44^{mk})$   & Chen and Chen \cite{ChenChen}        & D   & \\
  $O^*(4^{mk})$      & Chen et al \cite{DivideAndConquer}   & D   & Also for weighted problem \\
  $O^*(2^{(2k-1)m})$ & Feng et al \cite{WeightedParameterized} & D & Also for weighted problem \\
  \bottomrule
\end{tabular}
\caption{Parameterized complexity results of randomised (R) and deterministic (D) algorithms for $k-SP$.}
\label{tab:kSP}
\end{table} 

\backmatter

\bibliographystyle{alpha}
\newpage
\phantomsection
\addcontentsline{toc}{chapter}{\numberline{}Bibliography}
\thispagestyle{plain}
\bibliography{References}

\end{document}